\documentclass{jcmlatex}

\setcounter{page}{1}

\usepackage{times,amsmath,amsbsy,amssymb,amscd,mathrsfs, bm,graphicx}
\usepackage[font=small,labelfont=md,textfont=it]{caption}
\usepackage{floatrow,float}
\usepackage[titletoc, title]{appendix}
\usepackage{longtable}
\usepackage{pgfplots}
\usepackage{diagbox}
\usepackage{booktabs,makecell,multirow}
\usepackage[capitalise, nosort]{cleveref}
\usepackage{cases,color}

\newcommand{\snm}[1]{\left\lvert {#1} \right\rvert}
\usepackage{algorithm}
\usepackage{algpseudocode}
\usepackage{subcaption}

\newcommand{\proxi}[0]{ {\bf prox}}
\newcommand{\nm}[1]{\left\lVert {#1} \right\rVert}
\newcommand{\dom}[0]{ {\bf dom\,}}
\newcommand{\dd}{\,{\rm d}}
\newcommand{\bs}{\boldsymbol}
\newcommand{\dual}[1]{\left\langle {#1} \right\rangle}
\newcommand{\inprd}[1]{\langle#1\rangle} 
\newcommand{\argmin}[0]{ {\mathrm {argmin}\,}}

\DeclareMathOperator{\sym}{sym}

\newcounter{mnote}
\setcounter{mnote}{0}

\let\oldmarginpar\marginpar
\renewcommand\marginpar[1]{\-\oldmarginpar[\raggedleft\footnotesize #1]%
	{\raggedright\footnotesize #1}}

\begin{document}

\markboth{L.~CHEN, H.~LUO, and J.~WEI}{Acceleration through Variable and Operator Splitting}

\title{Accelerated Gradient Methods Through \\
Variable and Operator Splitting}

\author{Long Chen
	\thanks{Department of Mathematics, University of California at Irvine, Irvine, CA 92697, USA \\ Email: chenlong@math.uci.edu}
	\and
	Hao Luo
	\thanks{National Center for Applied Mathematics in Chongqing, Chongqing Normal University, Chongqing, 401331, China\\
 Chongqing Research Institute of 
 Big Data, Peking University, Chongqing, 401121, China \\
 Email: luohao@cqnu.edu.cn}
\and Jingrong Wei
\thanks{Department of Mathematics, The Chinese University of Hong Kong, Shatin, New Territories, Hong Kong, China \\
 Email: jingrongwei@cuhk.edu.hk}
}

\maketitle

\begin{abstract}
This paper introduces a unified framework for accelerated gradient methods through the variable and operator splitting (VOS). The operator splitting decouples the optimization process into simpler subproblems, and more importantly, the variable splitting leads to acceleration. The key contributions include the development of strong Lyapunov functions to analyze stability and convergence rates, as well as advanced discretization techniques like Accelerated Over-Relaxation (AOR) and extrapolation by the predictor-corrector methods (EPC).  For convex case, we introduce a dynamic updating parameter and a perturbed VOS flow. The framework effectively handles a wide range of optimization problems, including convex optimization, composite convex optimization, and saddle point systems with bilinear coupling.
\end{abstract}

\begin{classification}
	65K10, 65B99, 90C25, 90C47
\end{classification}

\begin{keywords}
	Unconstrained convex optimization, First order method, Dynamical system, Lyapunov function, Gradient flow, Proximal gradient method, Acceleration, Minimax problem
\end{keywords}


\section{Introduction}
%
Optimization methods form the backbone of various fields, ranging from data science to computational physics. Among these, first-order methods like gradient descent and its accelerated variants are widely used due to their simplicity and efficiency. However, designing and analyzing accelerated gradient methods for various optimization problems remains a challenge.

This work presents a unified framework for accelerated gradient methods to solve the operator equation arising from optimization problems:
$$
\mathcal{L}(x) = \nabla F(x) + N(x) = 0,
$$
where $F$ is convex and smooth, and $N$ is $\mu$-strongly monotone: 
$$
(N(x) - N(y), x - y) \geq \mu \|x - y\|^2, \quad \mu >0.
$$
Denote by $x^{\star}$ the solution to $\mathcal{L}(x^{\star}) = 0$. One way to obtain the solution $x^{\star}$ is through the (generalized) gradient flow:
\begin{equation}\label{eq:introode}
	x^{\prime} = - (\nabla F(x) + N(x)),
\end{equation}
where $x(t)$ is treated as a function of an artificial time variable $t$. Gradient descent (GD) and the proximal point algorithm (PPA) correspond to the explicit Euler and implicit Euler discretizations of \eqref{eq:introode}, respectively.

The operator splitting method is a broad class of methods that breaks down a complex problem into simpler subproblems~\cite{Glowinski2008,Glowinski2017,Glowinski2016}.
In addition to the operator splitting $\mathcal L = \nabla F + N$, we introduce a new variable $y$, which converges to $x$ but acts as a separate variable, and propose the following {\em variable and operator splitting} (VOS) flow:
\begin{equation}\label{eq:introsplitflow}
	\left\{
	\begin{aligned}
		x^{\prime} &= y - x, && \text{(variable splitting)} \\
		y^{\prime} &= - \frac{1}{\mu}(\nabla F(x) + N(y)). && \text{(operator splitting)}
	\end{aligned}
	\right.
\end{equation}
The equilibrium point is $x = y = x^{\star}$ solving the operator equation $\mathcal{L}(x^{\star}) = \nabla F(x^{\star}) + N(x^{\star}) = 0$. 

The operator splitting decouples the optimization process into simpler subproblems, and more importantly, the variable splitting leads to acceleration. We will demonstrate this acceleration using Lyapunov analysis.  
%

Consider the dynamical system defined by the ordinary differential equation (ODE):
\begin{equation}\label{eq:introzode}
	\bm{z}'(t) = \mathcal{G}(\bm{z}(t)),\quad t > 0, \quad \bm{z}(0) = \bm{z}_0,
\end{equation}
where $\bm{z}$ is a vector-valued function of the time variable $t$, e.g. $\bs z = (x,y)^{\intercal}$, and $\mathcal{G}$ is a vector field, which could represent the negative gradient or a reasonable alternative. In what follows, assume $\bm{z}^{\star}$ is an equilibrium point of $\mathcal{G}$, i.e., $\mathcal{G}(\bm{z}^{\star}) = 0$.

The Lyapunov function $\mathcal{E}(\bm{z})$ associated to the ODE \eqref{eq:introzode} usually satisfies
	\begin{itemize}
		\item (Non-negativity) $\mathcal{E}(\bm{z}) \geq 0$ for all $\bm z\in V$;
		\item (Optimality) $\mathcal{E}(\bm{z}) = 0$ if and only if  $\bm{z} = \bm z^\star$;
		\item (Lyapunov condition) $	-\inprd{\nabla \mathcal{E}(\bm{z}), \mathcal{G}(\bm{z})} \geq 0 $ for all $\bs z \text{ near } \bm{z}^{\star}$. 
	\end{itemize}
The Lyapunov condition implies that $\mathcal{G}$ serves as a descent direction for minimizing $\mathcal{E}(\bm{z})$. Consequently, the (local) decay property of $\mathcal{E}(\bm{z}(t))$ along the trajectory $\bm{z}(t)$ of the system \eqref{eq:introode} can be immediately derived:
$$
\frac{\dd}{\dd t} \mathcal{E}(\bm{z}(t)) = \inprd{\nabla \mathcal{E}(\bm{z}), \bm{z}'(t)} = \inprd{\nabla \mathcal{E}(\bm{z}), \mathcal{G}(\bm{z})} \leq 0.
$$


To establish the convergence rate of $\mathcal{E}(\bm{z}(t))$, we will use the {\it strong Lyapunov property} introduced in \cite{chen2021unified}. In its simplest form, $\mathcal{E}(\bm{z})$ is a Lyapunov function and satisfies:
\begin{equation}\label{eq:introLyp-cond}
	-\inprd{\nabla \mathcal{E}(\bm{z}), \mathcal{G}(\bm{z})} \geq c\, \mathcal{E}(\bm{z}), \quad \forall \bm{z} \in V,
\end{equation}
where $c > 0$ is a constant. Then 
$$
\frac{\dd}{\dd t} \mathcal{E}(\bm{z}(t)) = \inprd{\nabla \mathcal{E}(\bm{z}), \bm{z}'(t)} = \inprd{\nabla \mathcal{E}(\bm{z}), \mathcal{G}(\bm{z})} \leq - c\,  \mathcal{E}(\bm{z}),
$$
and the exponential stability then follows:
$$
\mathcal{E}(\bm{z}(t)) \leq \mathcal E(\bm{z}_0) e^{-ct}.
$$
As shown in \cite{chen2021unified}, the strong Lyapunov function offers a unified approach to analyzing various optimization flows and discretization schemes.

For the VOS flow \eqref{eq:introsplitflow}, a suitable Lyapunov function is:
\begin{equation*}
	\mathcal{E}(x, y) = D_F(x, x^{\star}) + \frac{\mu}{2} \|y - x^{\star}\|^2,
\end{equation*}
where $D_F(x, x^{\star})$ is the Bregman divergence (cf. Section \ref{sec:Breg}) associated with the convex function $F$. By direct calculations, the strong Lyapunov property holds:
$$
- \inprd{\nabla \mathcal{E}(x, y), \mathcal{G}(x, y)} \geq \mathcal{E}(x, y) + D_F(x^{\star}, x) + \frac{\mu}{2} \|y - x^{\star}\|^2.
$$

From the exponential stability of the equilibrium point in the continuous level to the accelerated convergence rate of a discretization, the journey is nontrivial. However, by employing the strong Lyapunov property and convexity, the linear convergence of the implicit Euler method 
$$\bs z_{k+1} - \bs z_k = \alpha \mathcal G(\bs z_{k+1}), \quad \alpha >0,$$ 
can be established in just a few lines:  
\[
\begin{aligned}
	\mathcal E( \bs z_{k+1}) - \mathcal E(\bs z_{k})  
	&\leqslant \inprd{\nabla \mathcal E(\bs z_{k+1}), \bs z_{k+1} - \bs z_k} & \text{(convexity of } \mathcal E)\\
	&= \alpha \inprd{\nabla \mathcal E(\bs z_{k+1}), \mathcal G(\bs z_{k+1})} & \text{(implicit Euler method)}\\
	&\leq -c \,\alpha \, \mathcal E(\bs z_{k+1}). & \text{(strong Lyapunov property})
\end{aligned}
\]
From which, the linear convergence for arbitrary step size $\alpha>0$ follows: 
$$
\mathcal E(\bs z_{k+1})\leq \frac{1}{1+ c\, \alpha}\mathcal E(\bs z_{k}), \quad \forall \alpha > 0.
$$

The implicit Euler method is impractical, necessitating an explicit treatment of certain terms in the flow. To obtain explicit schemes, we introduce two discretization techniques: Accelerated Over-Relaxation (AOR), originally developed for solving linear algebraic systems~\cite{hadjidimos1978accelerated}, and Extrapolation by the Predictor-Corrector (EPC) method~\cite[Section 6.2.3]{Gautschi:2011Numerical}, commonly used for solving ODEs. These techniques serve as the foundation for deriving accelerated gradient methods.

Here we use AOR to illustrate the idea and refer to Fig. \ref{fig:EPC} for the illustration of EPC. For the VOS flow \eqref{eq:introsplitflow}, an implicit-explicit (IMEX) discretization is given by  
\begin{subequations}
	\begin{align}
		\label{eq:AOR-VOSx}      
		\frac{x_{k+1}-x_k}{\alpha} &= 2y_{k+1} - y_k - x_{k+1},\\
		\label{eq:AOR-VOSy}    \frac{y_{k+1}-y_k}{\alpha} &= - \frac{1}{\mu} \left (\nabla F(x_k) + N(y_{k+1})\right ).
	\end{align}
\end{subequations}  
Here, $y$ is updated first by \eqref{eq:AOR-VOSy}, followed by $x$ through \eqref{eq:AOR-VOSx}. The AOR technique approximates $y \approx 2y_{k+1} - y_k$, aiming to symmetrize the difference equation of the Lyapunov function.  

The convergence analysis becomes
\[
\begin{aligned} 
	\mathcal E( \bs z_{k+1}) - \mathcal E(\bs z_{k})  
	={}& \inprd{\nabla \mathcal E(\bs z_{k+1}), \bs z_{k+1} - \bs z_k} -D_{\mathcal E}(\bs z_k, \bs z_{k+1}) & \\
	={}& \alpha \inprd{\nabla \mathcal E(\bs z_{k+1}), \mathcal G(\bs z_{k+1})} -D_{\mathcal E}(\bs z_k, \bs z_{k+1})\\
	&+ \alpha (\bs z_{k+1} - \bs z^{\star}, \bs z_{k+1} - \bs z_k )_{\mathcal A^{\sym}}, 
\end{aligned}
\]
where $\mathcal A^{\sym}$ is a symmetric but might be indefinite operator. 
With this symmetrization, the troublesome cross terms can be expanded using the identity of squares:
\[
\begin{aligned}
	&	\alpha (\bs z_{k+1} - \bs z^{\star}, \bs z_{k+1} - \bs z_k  )_{\mathcal A^{\sym}} \\
	={}& \frac{\alpha}{2} \left ( \|\bs z_{k+1}  - \bs z^{\star}\|_{ \mathcal A^{\sym}}^2 + \|\bs z_{k+1} - \bs z_k\|_{ \mathcal A^{\sym}}^2 - \|\bs z_{k} - \bs z^{\star}\|_{\mathcal A^{\sym}}^2 \right),
\end{aligned}
\]
These squares are then absorbed into the Lyapunov function $\mathcal E^\alpha(\bs z) = \mathcal E(\bs z) \pm \frac{\alpha}{2} \|\bs z - \bs z^\star\|_{\mathcal A^{\sym}}^2$. If $\alpha \leq \sqrt{\mu/L_F}$, where $L_F$ is the Lipschitz constant of $\nabla F$, then $\mathcal E^\alpha(\bs z)$ is a valid Lyapunov function, leading to accelerated convergence
$$
\mathcal E^\alpha(\bs z_{k+1})\leq \frac{1}{1+  \sqrt{\mu/L_F}} \mathcal E^\alpha(\bs z_k), \quad \text{ with } \alpha =  \sqrt{\frac{\mu}{L_F}}. 
$$  
Accelerated convergence rate for the original Lyapunov function $\mathcal E(\bs z_k)$ can be derived afterwards.

Our framework is not restricted to smooth convex optimization but also extends to saddle-point systems. Specifically, we demonstrate the following cases:  
	\begin{enumerate}
		\item Strongly convex optimization $\min_x f(x)$: $$\nabla F(x) = \nabla f(x) - \mu x, \quad N(y) = \mu y.$$ 
		
		\smallskip
		\item Composite convex optimization $\min_{x} f(x)+ g(x)$: $$\nabla F(x) = \nabla f(x) - \mu x, \quad N(y) = \partial g(y) + \mu y.$$
		
		\smallskip 
		\item Saddle-point problem with bilinear coupling:
		$$
		\min_{u \in \mathbb{R}^m} \max_{p \in \mathbb{R}^n} \mathcal{H}(u, p) = f(u) - g(p) + (Bu, p),
		$$  
		where $f$ and $g$ are strongly convex functions with constants $\mu_f > 0$ and $\mu_g > 0$, and $B$ is a bilinear coupling operator.  The splitting framework provides the following decomposition:  
		$$
		\underbrace{\begin{pmatrix}
				\nabla f(u)  -  \mu_f u\\ 
				\nabla g(p) - \mu_g p
			\end{pmatrix}
		}_{\nabla F} + \underbrace{\begin{pmatrix}
				0 & B^{\intercal} \\ 
				-B & 0
			\end{pmatrix}\begin{pmatrix}
				v \\ 
				q
			\end{pmatrix}
			+ 
			\begin{pmatrix}
				\mu_f v\\
				\mu_g q
			\end{pmatrix}
		}_{N}.
		$$
		
		\smallskip
		\item  When $\mathcal N:= N -\mu I$ is linear and skew-symmetric, VOS leads to the Accelerated Gradient and Skew-Symmetric Splitting Methods (AGSS) \cite{chen2023accelerated}.
	\end{enumerate}
	
Previously we consider $\mu >0$. On the convex case, i.e., $\mu = 0$, we also propose two techniches. One is dynamically scaling a parameter $\gamma$ that serves as a rescaling of time:  
	\begin{equation*}
		\left\{
		\begin{aligned}
			x' &= y - x,\\
			y' &= -\frac{1}{\gamma} (\nabla F(x) + N(y)),\\
			\gamma' &= -\gamma.
		\end{aligned}
		\right.
	\end{equation*}
	The lack of convexity is compensated by the decay of $\gamma$ in the Lyapunov function:  
	\begin{equation*}
		\mathcal{E}(x, y) = D_F(x, x^{\star}) + \frac{\gamma}{2} \|y - x^{\star}\|^2.
	\end{equation*}
	
	Alternatively, we can use a perturbation of the flow:  
	\begin{equation*}
		\left \{
		\begin{aligned}
			x^{\prime} &= y - x,\\
			y^{\prime} &= \frac1{\epsilon}\big(\underbrace{{\epsilon(x - y)}}_{\rm perturbation} -  \nabla F(x) - N(y)\big).
		\end{aligned}\right .
	\end{equation*}
	with the corresponding Lyapunov function  
	\begin{equation*}
		\mathcal{E}(x, y) = D_F(x, x^{\star}) + \frac{\epsilon}{2} \|y - x^{\star}\|^2.
	\end{equation*}
	The convergence analysis is achieved through a perturbed strong Lyapunov property. Unlike traditional perturbation arguments~\cite[Section 5.4]{lessard2016analysis}, \cite{thekumparampil2022lifted}, the variable splitting approach ensures that the perturbation (adding $\epsilon(x-y)$ to the vector field) does not alter the equilibrium points.

	The structure of this paper is as follows. Section~\ref{sec:bd-convex} reviews fundamental properties of convex functions and establishes key bounds. Section~\ref{sec:strong-Lyapunov-functoin} introduces strong Lyapunov functions and examines their role in convergence analysis. Section~\ref{sec:GD-Euler} studies the gradient flow and Euler methods. Section~\ref{sec:splitting} presents variable and operator splitting (VOS) flow, while Section~\ref{sec:AGD} describes advanced discretization schemes for VOS flow. Applications to convex and saddle point problems are discussed in Section~\ref{sec:AGSS}. Section~\ref{sec:Scale-GD-Prox} addresses the convex case.

\section{Bounds on Convex Functions}
\label{sec:bd-convex}
This section gives a quick review of several bounds on convex functions~\cite[Chapter 2]{Nesterov:2013Introductory}. Let $V$ be a Hilbert space with the inner product $(\cdot, \cdot)$ and let the dual space be denoted by $V^*$ with the bracket $\langle\cdot,\cdot\rangle$ the dual paring between $V^*$ and $V$. Throughout, both smooth convex functions over the entire space $V$ and extended-value function $f:V\to{\mathbb R}\cup\{+\infty\}$ are considered.  For the latter, the effective domain of $f$ is denoted by $\dom f:=\{x\in V:f(x)<\infty\}$.
%
\subsection{Convex functions}
A continuous function $f$ is called {\em convex} if  
\begin{equation*}
	f(\alpha x + (1-\alpha) y) \leqslant \alpha f(x) + (1-\alpha) f(y)\quad\forall\,\,x, y\in V,  
\end{equation*}  
for all $\alpha\in [0,1]$. It is called {\it strictly convex} if the above inequality holds strictly for $x \neq y$.  
A convex function is called {\it $\mu$-strongly convex} with parameter $\mu > 0$ if  
$$  
f_{-\mu}(\cdot) := f(\cdot) - \frac{\mu}{2}\|\cdot\|^2  
$$  
is convex. The notation $f_{-\mu}$ can also be extended to $f_{+\mu}$. 

The function $f$ is {\em coercive} if $f(x)\to\infty$ when $\nm{x}\to\infty$. If $f$ is $\mu$-strongly convex with $\mu>0$, then it is not hard to see $f$ is coercive. But convexity itself cannot imply the coercivity, e.g. $f(x) = e^{-x}$. The following results are classical, and proofs, which are skipped for the sake of brevity, can be found in~\cite[Proposition 1.2]{ekeland_convex_1987} or~\cite[Theorem.~8.2.2]{Ciarlet89}.

\begin{theorem}
	\label{thm:wellposedS3}
	If $f$ is convex and coercive, then $\min_{x\in V}f(x)$ admits at least one solution $x^{\star}\in V$, which is unique if we assume further $f$ is strictly convex.
\end{theorem}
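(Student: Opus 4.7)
The plan is to follow the classical direct method of the calculus of variations for existence, and a short convex-combination argument for uniqueness.

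First, set $m := \inf_{x \in V} f(x) \in \mathbb{R} \cup \{-\infty\}$ and select a minimizing sequence $\{x_n\} \subset V$ with $f(x_n) \to m$. Coercivity immediately forces $\{x_n\}$ to be bounded: if some subsequence satisfied $\|x_{n_k}\| \to \infty$, then $f(x_{n_k}) \to \infty$, contradicting the convergence $f(x_n) \to m$. Since $V$ is a Hilbert space, this bounded sequence admits a weakly convergent subsequence $x_{n_k} \rightharpoonup x^{\star}$ for some $x^{\star} \in V$.

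Next, I would invoke weak lower semicontinuity of $f$. A convex and continuous function on a Hilbert space has closed convex sublevel sets, which are therefore weakly closed by Mazur's theorem, so $f$ is weakly lower semicontinuous. This yields
\[
f(x^{\star}) \leqslant \liminf_{k \to \infty} f(x_{n_k}) = m,
\]
and together with $f(x^{\star}) \geqslant m$ this gives $f(x^{\star}) = m \in \mathbb{R}$, so $x^{\star}$ is a minimizer and existence is established.

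For uniqueness under strict convexity, suppose for contradiction that $x_1^{\star}, x_2^{\star}$ are two distinct minimizers. Strict convexity with $\alpha = 1/2$ gives
\[
f\!\left(\tfrac{1}{2} x_1^{\star} + \tfrac{1}{2} x_2^{\star}\right) < \tfrac{1}{2} f(x_1^{\star}) + \tfrac{1}{2} f(x_2^{\star}) = m,
\]
contradicting the definition of $m$ as the infimum. The main obstacle is the weak lower semicontinuity step: in finite dimensions it collapses to Bolzano--Weierstrass plus continuity, but in a general Hilbert space one genuinely needs Mazur's theorem, so the proof is essentially two lines of real analysis wrapped around one nontrivial functional-analytic fact. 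Everything else is routine, which is consistent with the authors' decision to cite \cite{ekeland_convex_1987,Ciarlet89} and skip the details.
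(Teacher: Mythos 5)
The paper does not prove this theorem itself; it explicitly skips the proof and refers the reader to~\cite{ekeland_convex_1987} and~\cite{Ciarlet89}. Your argument is correct and is precisely the standard direct method one would find there: coercivity bounds a minimizing sequence, reflexivity of the Hilbert space gives a weakly convergent subsequence, convexity plus (strong) lower semicontinuity gives weak lower semicontinuity via Mazur, and strict convexity at $\alpha = 1/2$ rules out two distinct minimizers.
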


Let $\mathcal C^1$ consist of all continuous differentiable functions on $V$. Denote by $\mathcal C_L^{1,1}$ the set of all $\mathcal C^1$ functions, the gradients of which are Lipschitz continuous with constant $0<L<\infty$:
\begin{equation*}
	\|\nabla f(x) - \nabla f(y)\|_{*} \leqslant L\| x - y \|\quad \forall\,x, y\in V,
\end{equation*}
where, for $g\in V^*$, the dual norm is
\[
\| g \|_{*} := \sup_{\substack{v \in V \\ \|v\|=1}} \langle g, v \rangle  = \sup_{v\in V\setminus\{0\}} \frac{\langle g,v\rangle}{\nm{v}}. 
\] 

We now introduce several function classes of convex functions. 
For $\mu > 0$, we use
$\mathcal S^0_\mu$ to denote the set of all $\mu$-strongly convex functions, and $\mathcal S^0_0$ for convex functions, where the superscript $0$ indicates the function is only continuous and may not be differentiable. Also, any $f\in\mathcal S_\mu^0$ is assumed to be closed (lower-semicontinuous) and proper ($\dom f\neq \emptyset$). Moreover, for all $\mu\geqslant 0$ we set $\mathcal S_\mu^1 := \mathcal S_\mu^0\cap\mathcal C^1$. 
%
For constants $0\leqslant \mu \leqslant L < \infty$, we introduce the function class
$$
\mathcal S^{1}_{\mu, L} := \{ f\in \mathcal S_0^1 : \mu  \| x - y \|^2 \leqslant \langle \nabla f(x) - \nabla f(y), x - y\rangle \leqslant L\| x - y \|^2\, \forall\,x, y\in V\}.
$$
Set $\mathcal S_{\mu,L}^{1,1} = \mathcal S_\mu^1\cap\mathcal C_L^{1,1}$. It can be shown that $\mathcal S_{\mu,L}^{1,1} = \mathcal S^{1}_{\mu, L}$; see~\cite{lessard_analysis_2016}.  The ratio $\kappa = L/\mu$ is called the condition number of $f$. 

Notice that the dual norm is metric related. Recall that $V$ is a Hilbert space with the inner product $(\cdot, \cdot)$. Let $A$ be a self-adjoint and positive definite operator defined on $V$. Then
\begin{equation*}
	(x, y)_A := (Ax, y)
\end{equation*}
defines a new inner product and the induced norm is denoted by $\|\cdot\|_A$. The dual norm with respect to $\|\cdot\|_A$ will be $\|\cdot \|_{A^{-1}}$. The convexity and Lipschitz constants can be changed to
\begin{equation*}
	\mu_A  \| x - y \|_A^2 \leqslant \langle \nabla f(x) - \nabla f(y), x - y\rangle \leqslant L_A\| x - y \|_A^2\quad \forall\,x, y\in V.
\end{equation*}
Choosing a correct inner product may reduce the condition number $\kappa_A = L_A/\mu_A$. 

The product $(\cdot,\cdot)_A$ can be defined for any symmetric operator $A$. Although it may not be positive, the identity of squares always holds
\begin{equation}\label{eq:squares}
		(x, y)_A = \frac{1}{2}\|x + y\|_A^2 - \frac{1}{2}\|x\|_A^2 - \frac{1}{2}\|y\|_A^2\quad\forall\,x,y\in V.
\end{equation}



\subsection{Bregman divergence and various bounds}
\label{sec:Breg}
For $f\in\mathcal C^1$, define
\begin{equation*}
	D_f(y,x) := f(y) - f_l(y; x) = f(y) - f(x) - \langle \nabla f(x), y - x \rangle,
\end{equation*}
where $f_l(y; x):= f(x) + \langle \nabla f(x), y - x \rangle$  is its linear Taylor expansion at $x$. If $f$ is convex, then $D_f(y,x)\geqslant 0$. When $f$ is strictly convex, $D_f(y,x)=0$ iff $x=y$, and $D_f(y,x)$ is called the {\it Bregman divergence} associated with $f$. 

By definition, $D_{f}$ is linear with respect to $f$, i.e. $D_{\alpha f + \beta g} = \alpha D_f + \beta D_g$.  
Treating $D_f(\cdot, x)$ as a function of the first variable, we have
$$
\nabla D_f(\cdot, x) = \nabla f(\cdot) - \nabla f(x). 
$$
The Bregman divergence of function $D_f(\cdot, x)$ is
$$
\begin{aligned}
	D_{D_f(\cdot, x)}(z,y) = D_f(z, x) -  D_f(y, x) - \langle \nabla D_f(y, x), z - y \rangle. 
\end{aligned}
$$ 
By direct calculation, we get  $D_{D_f}(z,y) = D_f(z,y)$, and thus obtain the identity 
\begin{equation}\label{eq:Bregmanidentity}
	\langle  \nabla f(y) - \nabla f(x), z - y \rangle = D_f(z, x) -  D_f(y, x) - D_f(z,y),
\end{equation}
which is known as a three-point identity of Bregman divergence \cite{chen1993convergence}.
When $f$ is quadratic, $D_f(y,x) = D_f(x, y) = \frac{1}{2}\|y-x\|^2_{\nabla^2f}$. Identity \eqref{eq:Bregmanidentity} reduces to the identity of squares \eqref{eq:squares}. 
 In general,  Bregman divergence is not symmetric, i.e., $D_{f}(y, x)\neq D_f(x, y)$.
We then introduce its symmetrization and call it the symmetrized Bregman divergence:  
\begin{equation*}
	2 M_{\nabla f}(x,y) :=  
	D_f(y,x)+D_f(x,y)=
	\langle \nabla f(x) - \nabla f(y), x - y\rangle.
\end{equation*}
By the fundamental theorem of calculus 
\begin{equation}\label{eq:Df-int}
	\begin{aligned}
		D_f(y,x) &=  \dual{ \int_0^1  \nabla f(x+ t (y-x)) - \nabla f(x) \dd  t, \,y - x }  \\
		&=\int_0^1 2M_{\nabla f}(x_t, x)\frac{\dd t}{t},
		\quad x_{t}: = x+ t(y-x).
	\end{aligned}
\end{equation}
Based on \eqref{eq:Df-int}, the bound for $D_f(y,x)$ can be shifted to $M_{\nabla f}(x,y)$, and vice versa.
%
%

\begin{lemma}\label{lem:Bregmandiv}
	For $f\in\mathcal C_{L}^{1,1}$, we have the upper bound
	\begin{align}
		\label{eq:DL} \max\{D_f(y,x), M_{\nabla f}(x,y)\}&\leqslant  \frac{L}{2}\| x - y \|^2.
	\end{align}
	For $f\in\mathcal S_{\mu}^1$ with $\mu\geqslant 0$, we have the (convexity) lower bound 
	\begin{align}
		\label{eq:Dmu}  \min\{D_f(y,x), M_{\nabla f}(x,y)\} &\geqslant \frac{\mu}{2}\| x - y \|^2.
	\end{align}
	All the above inequalities hold for all $x,y\in \dom f$.
\end{lemma}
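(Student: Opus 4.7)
The plan is to establish the two stated inequalities by first proving them for the symmetrized Bregman divergence $M_{\nabla f}$, which is more directly tied to the hypotheses, and then transferring the bounds to $D_f$ via the integral identity \eqref{eq:Df-int}.

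For the upper bound \eqref{eq:DL}, I would start with $M_{\nabla f}$. By its definition $2M_{\nabla f}(x,y) = \langle \nabla f(x) - \nabla f(y), x-y\rangle$, the Cauchy--Schwarz inequality for the dual pairing combined with the Lipschitz condition $\|\nabla f(x) - \nabla f(y)\|_* \leqslant L \|x-y\|$ gives $2M_{\nabla f}(x,y) \leqslant L\|x-y\|^2$, which is the desired bound on $M_{\nabla f}$. For $D_f(y,x)$, I would substitute this bound into \eqref{eq:Df-int}: writing $x_t = x + t(y-x)$, we have $M_{\nabla f}(x_t,x) \leqslant \tfrac{L}{2}\|x_t - x\|^2 = \tfrac{L}{2} t^2 \|y-x\|^2$, so
\[
D_f(y,x) = \int_0^1 2 M_{\nabla f}(x_t,x)\,\frac{\dd t}{t} \leqslant \int_0^1 L\, t\, \|y-x\|^2 \dd t = \frac{L}{2}\|y-x\|^2.
\]

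For the lower bound \eqref{eq:Dmu}, the inequality on $M_{\nabla f}$ is essentially the definition of $\mathcal{S}_{\mu,L}^{1} = \mathcal{S}_{\mu,L}^{1,1}$ stated in the excerpt, so nothing needs to be shown beyond reading off the defining condition. For $D_f(y,x)$, I would again use \eqref{eq:Df-int}, now substituting $2M_{\nabla f}(x_t,x) \geqslant \mu\,t^2 \|y-x\|^2$ to obtain $D_f(y,x) \geqslant \frac{\mu}{2}\|y-x\|^2$ by the same elementary integration.

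I do not anticipate a serious obstacle: the identity \eqref{eq:Df-int} reduces everything to bounding $M_{\nabla f}$, which follows immediately from the hypotheses. The only minor care needed is in handling the case $\mu=0$ (the lower bound is trivial) and in ensuring the integral formula \eqref{eq:Df-int} applies for all $x,y \in \dom f$, which is guaranteed since $f \in \mathcal{C}^1$ on the convex set $\dom f$ so that the segment $x_t$ lies in $\dom f$ and $\nabla f$ is continuous along it.
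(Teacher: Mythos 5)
Your proof is correct, and the upper-bound argument (Cauchy--Schwarz for $M_{\nabla f}$, then integrating via \eqref{eq:Df-int} for $D_f$) is identical to the paper's. For the lower bound \eqref{eq:Dmu} you take a slightly different route: you obtain the bound on $M_{\nabla f}$ first and then push it through \eqref{eq:Df-int} to get the bound on $D_f(y,x)$, whereas the paper derives the $D_f$ bound directly from $D_{f_{-\mu}}(y,x) = D_f(y,x) - \tfrac{\mu}{2}\|y-x\|^2 \geqslant 0$ (nonnegativity of the Bregman divergence of the convex function $f_{-\mu}$), and then \emph{sums} $D_f(y,x)$ and $D_f(x,y)$ to get the $M_{\nabla f}$ bound. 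Both arguments rest on the same hypothesis (convexity of $f_{-\mu}$) and cost about the same; yours has the virtue of treating upper and lower bounds symmetrically through the single identity \eqref{eq:Df-int}. One small imprecision: the lower bound on $M_{\nabla f}$ is not literally ``the definition of $\mathcal{S}_{\mu,L}^1$,'' since the lemma's hypothesis is $f\in\mathcal{S}_\mu^1$ (defined via convexity of $f_{-\mu}$), not $\mathcal{S}_{\mu,L}^1$; you need the one-line observation that convexity of $f_{-\mu}$ gives monotonicity $\langle \nabla f_{-\mu}(x)-\nabla f_{-\mu}(y),x-y\rangle\geqslant 0$, which rearranges to $2M_{\nabla f}(x,y)\geqslant\mu\|x-y\|^2$. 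This is elementary but should be stated rather than attributed to a definition that isn't being assumed.
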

\begin{proof}
	The bound \eqref{eq:DL} for $M_{\nabla f}(x,y)$ is a consequence of Cauchy-Schwarz inequality. Then apply it in \eqref{eq:Df-int} and integrate $\|x_t-x\|^2 = t^2\| y - x\|$ to get bound for $D_f(x,y)$. The bound \eqref{eq:Dmu} for $D_{f}(x,y)$ is from the fact $D_{f_{-\mu}}(y,x) = D_f(y,x) - \mu \|y-x\|^2/2\geq 0$ as $f_{-\mu}$ is convex. Summing the bound for $D_f(y,x)$ and $D_f(x,y)$ to obtain the lower bound of $M_{\nabla f}(x,y)$. 
$\Box$\end{proof}

\subsection{Conjugate function}
For any convex and continuous function $f$, we introduce its {\it conjugate function} or {\it dual} as follows
\begin{equation*}
	f^*(\chi):=\sup\limits_{x\in V}\left\{\dual{\chi,x}-f(x)\right\}\quad\forall\, \chi\in V^*.
\end{equation*}
For $f\in \mathcal S^{1,1}_{\mu, L}$, we can show $f^*\in \mathcal S^{1,1}_{1/L, 1/\mu}$ \cite[Theorem 5.26]{beck2017first} and the duality of the variables $(x, \chi)$
$$
\nabla f^*(\chi) = x,  \quad \nabla f(x) = \chi.
$$
Then,  for $x = \nabla f^*(\chi), y = \nabla f^*(\eta)$,
$$
\dual{\nabla f^*(\chi) - \nabla f^*(\eta), \chi - \eta} = \dual{x - y, \nabla f(x) - \nabla f(y)},
$$
which can be simply written as
\begin{equation}
	\label{eq:M-id} {}M_{\nabla f^*}(\chi, \eta) = M_{\nabla f}(x, y).
\end{equation}
By direct calculations, we have the relation,  for $x = \nabla f^*(\chi), y = \nabla f^*(\eta)$,
\begin{equation}\label{eq:DfDf*}
	D_f(y, x) = D_{f^*}(\chi, \eta).
\end{equation}
Notice that the order of variables are switched in \eqref{eq:DfDf*}. 

We apply the bounds in Lemma \ref{lem:Bregmandiv} to $f^*\in \mathcal S^{1,1}_{1/L, 1/\mu}$ and switch back to $f$ by the identity \eqref{eq:DfDf*} and \eqref{eq:M-id} to obtain bounds in terms of the difference of gradients.

\begin{lemma}
	For $f\in\mathcal S^{1}_{0,L}$, we have the (co-convexity) lower bound 
	\begin{align}
		\label{eq:philowerL} \min \{D_f(y,x), M_{\nabla f}(x,y)\} {}&\geqslant\frac{1}{2L}\| \nabla f(y) - \nabla f(x)\|_*^2.
	\end{align}
	For $f\in \mathcal S^{1}_{\mu}$ with $\mu>0$, we have the upper bound
	\begin{align*}
		\max\{D_f(y,x), M_{\nabla f}(x,y)\} &\leqslant \frac{1}{2\mu}\| \nabla f(y) - \nabla f(x)\|_*^2.
	\end{align*}
	All the above inequalities hold for all $x,y\in \dom f$. 	
\end{lemma}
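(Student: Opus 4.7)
The plan is to dualize: apply Lemma \ref{lem:Bregmandiv} to $f^*$ in $V^*$ and then pull the bounds back to $f$ via the identities \eqref{eq:DfDf*} and \eqref{eq:M-id}. This follows exactly the road map announced in the paragraph preceding the statement.

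For the first inequality, $f \in \mathcal{S}^1_{0,L}$ implies, by the conjugate duality cited just above the lemma, that $f^*$ is $1/L$-strongly convex and differentiable on the image of $\nabla f$. I set $\chi = \nabla f(x)$ and $\eta = \nabla f(y)$, so that $\nabla f^*(\chi) = x$ and $\nabla f^*(\eta) = y$. Applying the lower bound \eqref{eq:Dmu} of Lemma \ref{lem:Bregmandiv} to $f^*$ in the dual norm with strong convexity constant $1/L$ yields
\[
\min\{D_{f^*}(\chi, \eta),\, M_{\nabla f^*}(\chi, \eta)\} \geqslant \frac{1}{2L}\|\chi - \eta\|_*^2.
\]
Then \eqref{eq:DfDf*} converts $D_{f^*}(\chi, \eta)$ to $D_f(y, x)$ and \eqref{eq:M-id} converts $M_{\nabla f^*}(\chi, \eta)$ to $M_{\nabla f}(x, y)$, giving the desired co-convexity lower bound. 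For the second inequality, $f \in \mathcal{S}^1_\mu$ with $\mu > 0$ implies $f^* \in \mathcal{C}^{1,1}_{1/\mu}$, so I apply the upper bound \eqref{eq:DL} of Lemma \ref{lem:Bregmandiv} to $f^*$ with Lipschitz constant $1/\mu$ and perform the same substitution.

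The only subtlety worth flagging is the conjugate duality itself, since the strong convexity/smoothness of $f^*$ is what supplies the constants $1/L$ and $1/\mu$; this is a standard fact invoked (and cited) just above the statement, so no new work is required. Beyond that, the proof is a dualize-and-substitute, and the main bookkeeping hazard is simply remembering that the order of arguments in $D_{f^*}$ is swapped relative to $D_f$ per \eqref{eq:DfDf*}, and that the norm on the $f^*$-side is the dual norm $\|\cdot\|_*$.
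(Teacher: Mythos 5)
Your proposal is correct and follows exactly the route the paper itself indicates in the sentence preceding the lemma: dualize, apply Lemma \ref{lem:Bregmandiv} to $f^*\in\mathcal S^{1,1}_{1/L,1/\mu}$, and transfer back via \eqref{eq:DfDf*} and \eqref{eq:M-id}. You also correctly flag the two bookkeeping points (the argument swap in \eqref{eq:DfDf*} and the fact that the norm on the $f^*$ side is the dual norm), so nothing is missing.
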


Next we prove a refined lower bound.
\begin{lemma}
	For $f\in \mathcal S^{1}_{\mu, L}$ with $\mu\geqslant 0$, we have,  for all $x,y\in \dom f$,
	\begin{equation}\label{eq:refineM}
		\dual{\nabla f(x) - \nabla f(y), x - y} \geqslant \frac{\mu L}{\mu + L} \| x-y\|^2 + \frac{1}{\mu + L}\| \nabla f(x) - \nabla f(y)\|_*^2.
	\end{equation}
\end{lemma}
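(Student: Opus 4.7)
The plan is to reduce \eqref{eq:refineM} to the co-coercivity bound \eqref{eq:philowerL} by subtracting off the quadratic part of $f$. Set $g := f_{-\mu} = f - \frac{\mu}{2}\|\cdot\|^2$. Since $f\in\mathcal{S}^{1}_{\mu, L}$, subtracting $\mu\|x-y\|^2$ from both sides of the monotonicity bounds defining $\mathcal{S}^{1}_{\mu, L}$ shows that $0 \leq 2M_{\nabla g}(x,y) \leq (L-\mu)\|x-y\|^2$, so $g \in \mathcal{S}^{1}_{0, L-\mu}$.

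Assume first that $\mu < L$. Applying the co-coercivity bound \eqref{eq:philowerL} to $g$ yields
\begin{equation*}
	\dual{\nabla g(x) - \nabla g(y), x - y} \geq \frac{1}{L-\mu}\|\nabla g(x) - \nabla g(y)\|_*^2.
\end{equation*}
I then substitute $\nabla g(x) - \nabla g(y) = \nabla f(x) - \nabla f(y) - \mu(x-y)$ on both sides, using the Hilbert-space identification of $V$ with $V^*$ to expand the squared dual norm. Writing the shorthand $S := \dual{\nabla f(x) - \nabla f(y), x-y}$, $D := \|\nabla f(x) - \nabla f(y)\|_*^2$, and $R := \|x-y\|^2$, the inequality becomes
\begin{equation*}
	S - \mu R \;\geq\; \frac{1}{L-\mu}\bigl(D - 2\mu S + \mu^2 R\bigr).
\end{equation*}
Multiplying through by $L-\mu$ and collecting like terms produces $(L+\mu)S \geq D + \mu L R$, which is exactly \eqref{eq:refineM}.

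The only subtle point is the degenerate case $\mu = L$, where $L-\mu = 0$ makes the reduction inapplicable. Here the defining bounds of $\mathcal{S}^1_{L,L}$ pinch $g$ to $M_{\nabla g}\equiv 0$ with gradient Lipschitz constant $0$, so $\nabla g$ is constant and $\nabla f(x) - \nabla f(y) = \mu(x-y)$; direct substitution then makes both sides of \eqref{eq:refineM} equal to $\mu\|x-y\|^2$. Alternatively one may regard $f$ as lying in $\mathcal{S}^{1}_{\mu', L}$ for any $\mu' < L$ and pass to the limit $\mu' \nearrow L$ by continuity of the coefficients. The main (and essentially only) obstacle is algebraic bookkeeping: one must correctly expand the squared dual norm and check that the coefficients of $S$, $D$, and $R$ combine to produce precisely $\mu L/(\mu+L)$ and $1/(\mu+L)$; this is a short computation once the shift $f \mapsto f_{-\mu}$ is identified.
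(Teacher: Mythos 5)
Your argument is correct and follows essentially the same route as the paper: apply the co-coercivity bound \eqref{eq:philowerL} to $g = f_{-\mu}\in\mathcal S^1_{0,L-\mu}$ and expand; the algebra reducing $(L-\mu)(S-\mu R)\geq D - 2\mu S + \mu^2 R$ to $(L+\mu)S\geq D + \mu L R$ is right. The only (cosmetic) difference is in the degenerate case $\mu=L$, where the paper takes a $\tfrac12$-$\tfrac12$ convex combination of \eqref{eq:Dmu} and \eqref{eq:philowerL} while you instead observe that $\nabla f_{-\mu}$ is constant, making \eqref{eq:refineM} an equality; both are valid.
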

\begin{proof}
	The case $L = \mu$ is a simple convex combination of \eqref{eq:Dmu} and \eqref{eq:philowerL} with appropriate weights. For $L>\mu$, we apply the co-convexity \eqref{eq:philowerL} to $f_{-\mu}(x) = f(x) - \frac{\mu}{2}\| x\|^2$:  
	\begin{equation}\label{eq:Mpsi}
		\dual{\nabla f_{-\mu}(x) - \nabla f_{-\mu}(y), x - y} \geqslant\frac{1}{L-\mu}\nm{\nabla f_{-\mu}(x) - \nabla f_{-\mu}(y)}^2.
	\end{equation}
	Expand \eqref{eq:Mpsi} in terms of quantity of $f$ to get the desired result \eqref{eq:refineM}. 
$\Box$\end{proof}

\subsection{Bounds involving a global minimum}
We list specific examples of inequalities when one variable is $x^{\star}$ for which $\nabla f(x^{\star}) = 0$. Then $D_f(x, x^{\star}) = f(x) - f(x^{\star})$ is the so-called optimality gap and $2M_{\nabla f}(x,x^{\star}) = \dual{\nabla f(x), x - x^{\star}}$.

\begin{corollary} 
	For $f\in \mathcal S^1_{0,L}$, we have
	\begin{align}
		\notag
		\frac{1}{2L}\| \nabla f(x)\|_*^2 \leqslant f(x) - f(x^{\star}) & \leqslant \frac{L}{2}\| x - x^{\star}\|^2,\\ 
		\notag
		\frac{1}{L}\| \nabla f(x)\|_*^2 \leqslant \dual{\nabla f(x), x - x^{\star}} & \leqslant L \| x -x^{\star}\|^2 .
	\end{align}
	For $f\in \mathcal S^1_{\mu}$ with $\mu>0$, we have
	\begin{align}
		\label{eq:optgapmu}
		\frac{\mu}{2}\| x -x^{\star}\|^2 & \leqslant f(x) - f(x^{\star})  \leqslant \frac{1}{2\mu}\|\nabla f(x)\|_*^2, \\ 
		\label{eq:Mfmu} \mu\| x - x^{\star}\|^2 & \leqslant  \dual{\nabla f(x), x - x^{\star}}  \leqslant \frac{1}{\mu} \| \nabla f(x)\|_*^2, \\
		\label{eq:Mxstar}\dual{\nabla f(x), x - x^{\star}} &\geqslant f(x) -f(x^{\star}) + \frac{\mu}{2}\| x - x^{\star} \|^2.
	\end{align}
	For $f\in \mathcal S^{1}_{\mu, L}$ with $\mu\geqslant 0$, we have
	\begin{equation}\label{eq:refineMxstar}
		\dual{\nabla f(x), x - x^{\star}} \geqslant \frac{\mu L}{\mu + L} \| x-x^{\star}\|^2 + \frac{1}{\mu + L}\| \nabla f(x) \|_*^2.
	\end{equation}
	All the above inequalities hold for all $x\in \dom f$.
\end{corollary}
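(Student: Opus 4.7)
The corollary consists of specializations of the earlier lemmas to the case $y = x^\star$, so the plan is essentially to substitute and simplify. The key observation is that at a minimizer $\nabla f(x^\star) = 0$, which gives the two identities
\[
D_f(x, x^\star) = f(x) - f(x^\star), \qquad 2 M_{\nabla f}(x, x^\star) = \dual{\nabla f(x), x - x^\star}.
\]
With these in hand, every displayed inequality in the corollary should fall out of a single lemma stated earlier in the section.

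First I would handle the block for $f\in\mathcal S^1_{0,L}$ and the first two inequalities of the $\mathcal S^1_\mu$ block simultaneously. For the upper bound $f(x)-f(x^\star)\leqslant \frac{L}{2}\|x-x^\star\|^2$ and for $\dual{\nabla f(x),x-x^\star}\leqslant L\|x-x^\star\|^2$, substitute $y=x^\star$ in \eqref{eq:DL}. For the co-convexity lower bounds $\frac{1}{2L}\|\nabla f(x)\|_*^2\leqslant f(x)-f(x^\star)$ and $\frac{1}{L}\|\nabla f(x)\|_*^2\leqslant \dual{\nabla f(x),x-x^\star}$, substitute $y=x^\star$ in \eqref{eq:philowerL}. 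For the strongly convex counterparts \eqref{eq:optgapmu} and \eqref{eq:Mfmu}, the lower bounds in $\|x-x^\star\|^2$ come from \eqref{eq:Dmu} and the upper bounds in $\|\nabla f(x)\|_*^2$ come from the upper bound in the conjugate lemma, each specialized to $y=x^\star$.

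The slightly nonroutine item is \eqref{eq:Mxstar}, because it combines an optimality gap with a distance term rather than replacing one by the other. Here the cleanest approach is to use the three-point identity \eqref{eq:Bregmanidentity} directly, or equivalently the strong-convexity lower bound $D_f(x^\star,x)\geqslant \frac{\mu}{2}\|x-x^\star\|^2$. Expanding $D_f(x^\star,x) = f(x^\star) - f(x) + \dual{\nabla f(x), x - x^\star}$ and rearranging yields \eqref{eq:Mxstar} immediately. Finally, \eqref{eq:refineMxstar} is obtained by setting $y=x^\star$ in \eqref{eq:refineM} and using $\nabla f(x^\star)=0$ to collapse the right-hand side.

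I do not expect any genuine obstacle: the whole statement is a bookkeeping exercise in substitution. The only place a reader might stumble is recognizing that \eqref{eq:Mxstar} requires $D_f(x^\star, x)$ (the reverse-order Bregman divergence) rather than $D_f(x, x^\star)$, which is why I would highlight that step explicitly while leaving the remaining bounds as direct corollaries of Lemmas~\ref{lem:Bregmandiv} and the conjugate/refined bounds.
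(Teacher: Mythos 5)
Your proof is correct and follows exactly the route the paper intends: the corollary is stated without proof because every inequality is a direct specialization of the preceding lemmas to $y=x^\star$ with $\nabla f(x^\star)=0$. Your explicit identification of the one nontrivial step — that \eqref{eq:Mxstar} requires the reverse-order divergence $D_f(x^\star,x)$ rather than $D_f(x,x^\star)$, so that expanding $D_f(x^\star,x)=f(x^\star)-f(x)+\dual{\nabla f(x),x-x^\star}\geqslant\frac{\mu}{2}\|x-x^\star\|^2$ and rearranging gives the result — is the right point to flag, and your derivations of the remaining bounds from \eqref{eq:DL}, \eqref{eq:Dmu}, \eqref{eq:philowerL}, the conjugate upper bound, and \eqref{eq:refineM} are all sound.
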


We extend those bounds to the non-smooth convex functions. Recall that the sub-gradient $\partial f$ of a proper and convex function $f$ is a set-valued function and can be defined as follows
\begin{equation*}
	\partial f(x):=\left\{
	p\in V^*:\,f(y)-f(x)\geqslant \dual{p,y-x}\quad\forall\,y\in V
	\right\}.
\end{equation*} 
Any $p\in\partial f(x)$ will be also called a {\it sub-gradient} of $f$ at $x$. 
\begin{corollary} \label{coro:bd-non}
	For $f\in \mathcal S^0_{\mu}$ with $\mu>0$, we have
	\begin{align*}
		\frac{\mu}{2}\| x -x^{\star}\|^2 & \leqslant f(x) - f(x^{\star})  \leqslant \frac{1}{2\mu}\|p\|_*^2, \\ 
		\mu\| x - x^{\star}\|^2 & \leqslant  \dual{p, x - x^{\star}}  \leqslant \frac{1}{\mu} \| p\|_*^2, \\
		\dual{p, x - x^{\star}} &\geqslant f(x) -f(x^{\star}) + \frac{\mu}{2}\| x - x^{\star} \|^2,
	\end{align*}
	where $p\in\partial f(x)$ and $x\in \dom f$.
\end{corollary}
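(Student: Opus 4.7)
The plan is to first establish the strong-convexity subgradient inequality
\[
f(y) \geq f(x) + \langle p, y - x\rangle + \frac{\mu}{2}\|y - x\|^2, \quad \forall\, y \in V, \ p \in \partial f(x),
\]
and then derive each of the three displayed bounds from it exactly as in the smooth case. To get this master inequality, I write $f = f_{-\mu} + \frac{\mu}{2}\|\cdot\|^2$ where $f_{-\mu}$ is convex by assumption, observe that $p \in \partial f(x)$ iff $p - \mu x \in \partial f_{-\mu}(x)$, apply the ordinary convex subgradient inequality to $f_{-\mu}$, and reassemble the square $\frac{\mu}{2}\|y-x\|^2$ from the terms $\frac{\mu}{2}\|y\|^2 - \frac{\mu}{2}\|x\|^2 - \mu\langle x, y-x\rangle$.

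With the master inequality in hand, the bound $\langle p, x - x^\star\rangle \geq f(x) - f(x^\star) + \frac{\mu}{2}\|x - x^\star\|^2$ is just the $y = x^\star$ case after rearrangement. Applying the inequality instead at $x^\star$ with the subgradient $0 \in \partial f(x^\star)$ (which holds since $x^\star$ is the unique minimizer, see Theorem~\ref{thm:wellposedS3}) yields $f(x) - f(x^\star) \geq \frac{\mu}{2}\|x - x^\star\|^2$, the first bound. Combining these two gives $\langle p, x - x^\star\rangle \geq \mu\|x - x^\star\|^2$, completing all lower bounds.

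For the upper bounds, the tool is Young's inequality in the form $\langle p, x - x^\star\rangle \leq \frac{\mu}{2}\|x-x^\star\|^2 + \frac{1}{2\mu}\|p\|_*^2$. Substituting into the rearranged master inequality $f(x) - f(x^\star) \leq \langle p, x - x^\star\rangle - \frac{\mu}{2}\|x-x^\star\|^2$ makes the $\frac{\mu}{2}\|x-x^\star\|^2$ terms cancel and gives $f(x) - f(x^\star) \leq \frac{1}{2\mu}\|p\|_*^2$. For the remaining bound $\langle p, x - x^\star\rangle \leq \frac{1}{\mu}\|p\|_*^2$, I first use Cauchy--Schwarz on the already-proven lower bound $\mu\|x-x^\star\|^2 \leq \langle p, x-x^\star\rangle \leq \|p\|_* \|x-x^\star\|$ to extract $\|x-x^\star\| \leq \|p\|_*/\mu$, then feed this back into Cauchy--Schwarz once more.

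There is no real obstacle here; the content is essentially a nonsmooth transcription of the corollary preceding it. The only point that deserves care is the identification $\partial f = \partial f_{-\mu} + \mu\, \mathrm{Id}$, which is immediate from the definition but is what makes the whole argument go through without ever invoking differentiability. Once stated, every line parallels the smooth derivation with $\nabla f(x)$ replaced by an arbitrary $p \in \partial f(x)$.
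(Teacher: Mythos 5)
Your proof is correct and is exactly the direct extension the paper leaves implicit (the corollary is stated with no proof, only the remark ``We extend those bounds to the non-smooth convex functions''). The one place to be a bit more careful is the claim that $\partial f = \partial f_{-\mu} + \mu\,\mathrm{Id}$ is ``immediate from the definition'': the inclusion $\partial f_{-\mu}(x) + \mu x \subseteq \partial f(x)$ is indeed immediate, but the inclusion you actually use, namely $p\in\partial f(x)\Rightarrow p-\mu x\in\partial f_{-\mu}(x)$, needs a short extra step --- either the Moreau--Rockafellar subdifferential sum rule (applicable since $\frac{\mu}{2}\nm{\cdot}^2$ is finite and continuous), or the elementary argument of interpolating $z_t = x + t(y-x)$, combining the convexity inequality for $f_{-\mu}$ with the subgradient inequality for $f$, and sending $t\to 0^+$. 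Once that is noted, each of the three bounds follows exactly as you say, and your use of $0\in\partial f(x^\star)$ and Young's/Cauchy--Schwarz mirrors the smooth derivation cleanly; there is no reliance on the conjugate-function machinery of Lemma 2.3, which is good since $f^*$ and the duality identity~\eqref{eq:DfDf*} presuppose smoothness of $f$ and would not transfer directly.
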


	\section{Strong Lyapunov Functions}  
	\label{sec:strong-Lyapunov-functoin}  
	In this section, we consider the dynamical system defined by the ODE  
	\begin{equation}\label{eq:ode-G}  
		x'(t) = \mathcal G(x(t)), \quad t > 0,  
	\end{equation}  
	where \( x: \mathbb{R}_+ \to V \) and \( \mathcal{G}: V \to V^* \) is a vector field. Let \( x^{\star} \) denote an equilibrium point of the dynamical system \eqref{eq:ode-G}, i.e., \( \mathcal{G}(x^{\star}) = 0 \). Our primary interest lies in analyzing the convergence of \( x(t) \) to \( x^{\star} \) as \( t \to \infty \).  
	
	\subsection{Strong Lyapunov condition and decay property}  
	The Lyapunov function was originally introduced to study the stability of equilibrium points. To analyze convergence rates, however, we require a stronger condition than $-\inprd{\nabla \mathcal{E}(x), \mathcal{G}(x)}\geq 0$.  
	
	Specifically, suppose there exists a compact subset \( W \subseteq V \), a positive constant \( c > 0 \), a constant \( q \geqslant 1 \), and a function \( p(x): V \to \mathbb{R} \), such that \( \mathcal{E}(x) \geqslant 0 \) and  
	\begin{equation}\label{eq:A}  
		-\inprd {\nabla \mathcal{E}(x) , \mathcal{G}(x)}\geqslant c \, \mathcal{E}^q(x) + p^2(x) \quad \forall \, x \in W.  
	\end{equation}  
	Under these conditions, we call \( \mathcal{E} \) a locally strong Lyapunov function if \( W \subset V \) or a globally strong Lyapunov function if \( W = V \).  
	
	\begin{theorem}\label{thm:strongLya}  
		Assume that \( \mathcal{E}(x) \) satisfies the strong Lyapunov property \eqref{eq:A}. If the trajectory \( x(t) \) of \eqref{eq:ode-G} satisfies \( \{x(t) : t \geqslant 0 \} \subset W \), then for all \( t \geqslant 0 \),  
		\begin{equation}\label{eq:ineq-L-q}  
			\mathcal{E}(x(t)) + \int_{0}^{t} e^{c(s-t)} \|p(x(s))\|^2 \, \mathrm{d}s \leqslant  
			\mathcal{E}_0 \exp(-c \, t), \quad \text{ for } q = 1.
		\end{equation}  
		and 
		\begin{equation}
			\mathcal{E}(x(t)) 	\leqslant	\left( \frac{1}{(q-1)c\, t + \mathcal{E}_0^{1-q}} \right)^{\frac{1}{q-1}}, \quad \text{ for } q > 1,
		\end{equation}
		where $\mathcal E_0 =		\mathcal{E}(x(0))$.	
	\end{theorem}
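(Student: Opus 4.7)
The plan is to reduce the abstract Lyapunov condition to a scalar differential inequality along the trajectory and then integrate it, treating the cases $q=1$ and $q>1$ separately.

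The first step is to apply the chain rule. Since $x(t)$ solves \eqref{eq:ode-G} and is assumed to stay in $W$, we have
\[
\frac{\dd}{\dd t}\mathcal{E}(x(t)) = \inprd{\nabla \mathcal{E}(x(t)), x'(t)} = \inprd{\nabla \mathcal{E}(x(t)), \mathcal{G}(x(t))}.
\]
Invoking the strong Lyapunov property \eqref{eq:A}, this yields the scalar differential inequality
\[
\frac{\dd}{\dd t}\mathcal{E}(x(t)) \leqslant -c\,\mathcal{E}^q(x(t)) - p^2(x(t)), \quad t \geqslant 0.
\]

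For the case $q = 1$, I would use the standard integrating factor trick. Multiplying by $e^{ct}$ turns the inequality into
\[
\frac{\dd}{\dd t}\bigl(e^{ct}\mathcal{E}(x(t))\bigr) \leqslant -e^{ct} p^2(x(t)),
\]
and integrating from $0$ to $t$ and multiplying by $e^{-ct}$ gives exactly \eqref{eq:ineq-L-q}.

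For the case $q > 1$, I would set $u(t) := \mathcal{E}(x(t))$ and discard the nonnegative $p^2$ term to obtain $u' \leqslant -c\, u^{q}$. If $u(t_0) = 0$ at some time $t_0$, the claimed bound holds trivially for $t \geqslant t_0$, so I restrict to an interval where $u > 0$. Dividing by $u^q$ and integrating gives
\[
\frac{u(t)^{1-q} - u_0^{1-q}}{1-q} \leqslant -c\, t,
\]
and since $1 - q < 0$, rearrangement yields $u(t)^{1-q} \geqslant u_0^{1-q} + (q-1)c\, t$, which is equivalent to the stated polynomial decay estimate after raising to the power $-1/(q-1)$.

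The main technical obstacle, such as it is, lies in justifying the separation of variables step when $q>1$: one must ensure that $\mathcal{E}(x(t))$ does not vanish on the integration interval (or handle the vanishing case separately). This is mild, since $\mathcal E \geqslant 0$ and monotone nonincreasing along the flow by the Lyapunov condition, so its zero set is a terminal half-line where the bound is automatic. Beyond this, the argument is a straightforward Gronwall-type integration, relying only on the hypothesis that the trajectory remains in $W$ so that \eqref{eq:A} is applicable pointwise in time.
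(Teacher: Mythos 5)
Your proof is correct and follows essentially the same route as the paper: chain rule plus the strong Lyapunov condition to obtain the scalar differential inequality, the integrating factor $e^{ct}$ for $q=1$, and separation of variables (equivalently, differentiating $\mathcal{E}^{1-q}$) for $q>1$. The only addition you make is the explicit treatment of the case where $\mathcal{E}(x(t))$ vanishes before time $t$, a mild technical point the paper passes over silently; your observation that the zero set is a terminal half-line (by nonnegativity and monotone decay) and that the bound is automatic there is a correct and slightly more careful justification of the $q>1$ argument.
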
  
	\begin{proof}  
		By assumption, for all \( t \geqslant 0 \),  
		\begin{equation}\label{eq:diff-L-q}  
			\begin{split}  
				\frac{\mathrm{d}}{\mathrm{d}t} \mathcal{E}(x(t))   
				&=\inprd{ \nabla \mathcal{E}(x(t)),x'(t)}  = \inprd{ \nabla \mathcal{E}(x(t)),\mathcal{G}(x(t)) }\\  
				&\leqslant -c \, \mathcal{E}^q(x(t)) - \|p(x(t))\|^2 
			\end{split}  
		\end{equation}  
		
		For the case \( q = 1 \), integrating \eqref{eq:diff-L-q} immediately gives the desired result.  
		
		Now consider \( q > 1 \). From \eqref{eq:diff-L-q}, we have  
		\[
		\frac{\mathrm{d}}{\mathrm{d}t} \mathcal{E}^{1-q}(x(t))  
		= (1-q) \frac{\mathcal{E}'(x(t))}{\mathcal{E}^q(x(t))}  
		\geqslant c (q-1).  
		\]  
		Integrating this inequality yields  
		\[
		\mathcal{E}^{1-q}(x(t)) - \mathcal{E}^{1-q}(x(0)) \geqslant c (q-1) t, \quad t \geqslant 0.  
		\]  
		Rearranging gives  
		\[
		\mathcal{E}(x(t)) \leqslant \left( (q-1)c \, t + \mathcal{E}^{1-q}_0 \right)^{1/(1-q)},  
		\]  
		which completes the proof.  
	$\Box$\end{proof}  
	
	Generally, the field \( \mathcal{G} \) can be a set-valued mapping, which occurs when \( f \) is convex but non-smooth. This scenario leads to the differential inclusion:  
	\begin{equation}\label{eq:di}  
		x'(t) \in \mathcal{G}(x(t)), \quad t > 0.  
	\end{equation}  
	To emphasize the dependence of the sub-gradient \( \partial f \), we refine the notation \( \mathcal{G}(x) \) to \( \mathcal{G}(x, d(x)) \), where \( d(x) \in \partial f(x) \) represents a particular direction. Under this notation, \eqref{eq:di} can be expressed as  
	\[
	x'(t) = \mathcal{G}(x(t), d(x(t))).  
	\]  	
	Similarly, a Lyapunov function \( \mathcal{E}(x) \) may not be smooth. For a specific direction \( d \in \partial f(x) \), \( \partial \mathcal{E}(x, d) \) is assumed to be a single-valued vector function.  
	
	The strong Lyapunov condition can be extended to the non-smooth setting as follows:  
	\begin{equation}\label{eq:strLnonsmooth}  
		-\inprd{ \partial \mathcal{E}(x, d) , \mathcal{G}(x, d)} \geqslant c \mathcal{E}^q(x) + p^2(x), \quad \forall x \in W.  
	\end{equation}  
	When verifying the strong Lyapunov property \eqref{eq:strLnonsmooth} for non-smooth functions, it suffices to identify a single sub-gradient \( d \in \partial f(x) \) that satisfies the inequality.  
	
	In most cases, we consider the setting where \( W =  V  \) and \( q = 1 \). 
	
	\subsection{Linear convergence of the implicit Euler method}  
	The strong Lyapunov property ensures the straightforward establishment of the linear convergence of the implicit Euler method for solving $x'=\mathcal G(x)$.  
	
	\begin{theorem} \label{th:implicitEuler} 
		Assume \( \mathcal{E}(x) \) is a convex Lyapunov function satisfying the strong Lyapunov property for some \( c > 0 \):  
		\[
		-\inprd{ \nabla \mathcal{E}(x) ,\mathcal{G}(x)} \geqslant c\, \mathcal{E}(x), \quad \forall x \in  V .
		\]  
		Let \( \{x_k\} \) be the sequence generated by the implicit Euler method, starting from a given \( x_0 \), for $k = 0, 1, \ldots $  
		\[
		x_{k+1} - x_k = \alpha_k \mathcal{G}(x_{k+1}).
		\]  
		Then, for \( k \geq 0 \), the sequence satisfies the linear contraction:  
		\[
		\mathcal{E}(x_{k+1}) \leqslant \frac{1}{1 + c\, \alpha_k} \mathcal{E}(x_k).  
		\]  
	\end{theorem}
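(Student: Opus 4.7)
The proof plan follows exactly the three-line sketch already displayed in the introduction, so the task is mostly to verify that each step is rigorous and then to read off the contraction factor. Here is how I would lay it out.

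First I would use the convexity of $\mathcal{E}$ to compare its values at $x_k$ and $x_{k+1}$ through a subgradient at the \emph{later} iterate:
\[
\mathcal{E}(x_{k+1}) - \mathcal{E}(x_k) \leq \langle \nabla \mathcal{E}(x_{k+1}),\, x_{k+1} - x_k \rangle.
\]
This is the standard ``gradient at the right endpoint'' inequality for a convex function, equivalent to $D_{\mathcal{E}}(x_k, x_{k+1}) \geq 0$ from the Bregman calculus of Section~\ref{sec:Breg}. The reason to expand at $x_{k+1}$ (rather than at $x_k$) is that the implicit Euler update writes $x_{k+1} - x_k$ as a multiple of $\mathcal{G}(x_{k+1})$, which is precisely what the strong Lyapunov property knows how to bound.

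Next I would substitute the implicit Euler relation $x_{k+1} - x_k = \alpha_k \mathcal{G}(x_{k+1})$ into the inner product and invoke the strong Lyapunov hypothesis at the point $x_{k+1} \in V$:
\[
\langle \nabla \mathcal{E}(x_{k+1}),\, x_{k+1} - x_k \rangle
= \alpha_k \langle \nabla \mathcal{E}(x_{k+1}),\, \mathcal{G}(x_{k+1}) \rangle
\leq -c\,\alpha_k\, \mathcal{E}(x_{k+1}).
\]
Chaining this with the convexity inequality yields $\mathcal{E}(x_{k+1}) - \mathcal{E}(x_k) \leq -c\,\alpha_k\, \mathcal{E}(x_{k+1})$, which rearranges to the claimed contraction
\[
(1 + c\,\alpha_k)\, \mathcal{E}(x_{k+1}) \leq \mathcal{E}(x_k).
\]

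There is no real obstacle: the argument is three lines of inequalities and one algebraic rearrangement, and no smallness condition on $\alpha_k$ is needed (so the contraction holds for arbitrary positive step sizes, as the authors emphasize just above the theorem). The only point I would flag for care is that the strong Lyapunov condition is applied pointwise at $x_{k+1}$ and therefore requires $\mathcal{G}$ to be defined there; since the implicit Euler step already presupposes the solvability of $x_{k+1} = x_k + \alpha_k \mathcal{G}(x_{k+1})$, this is automatic. If one wanted to extend to the non-smooth Lyapunov setting \eqref{eq:strLnonsmooth}, one would replace $\nabla \mathcal{E}(x_{k+1})$ by the appropriate subgradient direction $\partial \mathcal{E}(x_{k+1}, d(x_{k+1}))$, but the three-line structure of the argument is unchanged.
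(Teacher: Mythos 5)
Your proposal matches the paper's proof step for step: expand the difference via convexity at $x_{k+1}$, substitute the implicit Euler relation, apply the strong Lyapunov property at $x_{k+1}$, and rearrange. The extra observations you add (why the gradient must be taken at the later iterate, that no step-size restriction is needed, and the remark on the non-smooth extension) are correct and consistent with the paper's surrounding discussion.
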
  
	
	\begin{proof}  
		For brevity, denote \( \mathcal{E}_k = \mathcal{E}(x_k) \). Then
		\begin{align*}
			\mathcal E_{k+1} - \mathcal E_{k} 
			&\leq \inprd{ \nabla \mathcal E(x_{k+1}), x_{k+1}-x_k}  &\text{ (convexity of } \mathcal E)\\
			& = \alpha_k \inprd{ \nabla \mathcal E(x_{k+1}), \mathcal G(x_{k+1})} & \text{ (implicit Euler method})\\
			&\leq-c\, \alpha_k \mathcal E_{k+1}. & \text{(strong Lyapunov property at } x_{k+1})
		\end{align*}
		Rearranging terms to get the desired linear convergence.			
	$\Box$\end{proof}  
	
	\section{Gradient Flow and Euler Methods}
	\label{sec:GD-Euler}
	In this section, we study the gradient flow and its connection to gradient descent methods. Convergence analysis is derived based on the strong Lyapunov property for various Lyapunov functions.  
	
	\subsection{Gradient Flow}\label{sec:gradient}  
	The simplest dynamical system associated with the unconstrained minimization problem  
	\[
	\min_{x \in  V } f(x)  
	\]  
	is the gradient flow:  
	\begin{equation}\label{eq:gf}  
		x'(t) = -\nabla f(x(t)),  
	\end{equation}  
	with the initial condition \( x(0) = x_0 \). Hence \( \mathcal{G}(x) = -\nabla f(x) \) for the gradient flow.  
	
	\subsubsection{\bf Strongly convex case}  
	Assume  \( f \in \mathcal{S}_{\mu}^1 \) with \( \mu > 0 \) and let \( x^\star \) be the global minimizer of \( f \).  
	A natural choice for the Lyapunov function is the so-called optimality gap:  
	\begin{equation*}
		\mathcal{E}(x) = f(x) - f(x^\star).
	\end{equation*}  
	We verify the strong Lyapunov property using \eqref{eq:optgapmu}:  
	\[
	-
	\inprd{\nabla \mathcal{E}(x) ,\mathcal{G}(x)} = \|\nabla f(x)\|_*^2 \geq \mu\, \mathcal{E}(x) + \frac{1}{2}\|\nabla f(x)\|_*^2.  
	\]  
	Notice we split the $\|\nabla f(x)\|_*^2$ to have an additional positive term \( p^2 = \|\nabla f(x)\|_*^2 / 2 \).

	We can consider alternative candidates for strong Lyapunov functions besides the optimality gap. Two notable examples are presented below.  One is the squared distance to the minimizer:  
	\[
	\mathcal{E}(x) = \frac{1}{2}\|x - x^\star\|^2.
	\]  
	If \( f \in \mathcal{S}_{\mu, L}^1 \) with \( 0 < \mu \leq L < \infty \), then using \eqref{eq:refineMxstar}, we obtain:  
	\[
	\begin{split}  
		-\inprd{ \nabla \mathcal{E}(x) ,\mathcal{G}(x)} &= \langle x - x^\star, \nabla f(x) \rangle \geq \frac{2\mu L}{L + \mu} \mathcal{E}(x) + \frac{1}{L + \mu} \|\nabla f(x)\|_*^2.  
	\end{split}
	\]  
	Another effective candidate is a combination of the optimality gap and squared distance:  
	\begin{equation*}
		\mathcal{E}(x) = f(x) - f(x^\star) + \frac{\mu}{2}\|x - x^\star\|^2.  
	\end{equation*} By \eqref{eq:Mxstar}, the following strong Lyapunov property holds:  
	\begin{equation*}
		-\inprd{ \nabla \mathcal{E}(x),\mathcal{G}(x)}= \|\nabla f(x)\|_*^2 + \mu \langle x - x^\star, \nabla f(x) \rangle \geq \mu \mathcal{E}(x) + \|\nabla f(x)\|_*^2.  
	\end{equation*}  
	
	We summarize the results in the following proposition.
	\begin{proposition}\label{pro:stLygd}
		Assume \( f \in \mathcal{S}_{\mu, L}^1 \) with \( 0 < \mu \leq L \leq \infty \). For the gradient flow $x'(t) = -\nabla f(x(t))$, we have the following strong Lyapunov functions:
		\begin{align}
			\mathcal{E}(x) &= f(x) - f(x^\star), \quad -\inprd{ \nabla \mathcal{E}(x),\mathcal{G}(x)} \geq \mu \mathcal{E}(x) + \frac{1}{2}\|\nabla f(x)\|_*^2,\\
			\mathcal{E}(x) &= \frac{1}{2}\|x - x^\star\|^2, \quad -\inprd{ \nabla \mathcal{E}(x),\mathcal{G}(x)} \geq \frac{2\mu L}{L + \mu} \mathcal{E}(x) + \frac{1}{L + \mu} \|\nabla f(x)\|_*^2,
		 \\
		\label{eq:Lt-cond-gd}	\mathcal{E}(x) &= f(x) - f(x^\star) + \frac{\mu}{2}\|x - x^\star\|^2, \quad 	-\inprd{ \nabla \mathcal{E}(x),\mathcal{G}(x)} \geq \mu \mathcal{E}(x) + \|\nabla f(x)\|_*^2.  
		\end{align}
		Consequently Theorem~\ref{thm:strongLya} guarantees the exponential decay \( \mathcal{O}(e^{-c t}) \) for both \( \mathcal E(x) \) and \( \|\nabla f(x)\|_* \) along the trajectory of the gradient flow \eqref{eq:gf}.   
	\end{proposition}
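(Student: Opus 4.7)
The proposition compiles three choices of Lyapunov function and for each one asserts a strong Lyapunov inequality with an explicit contraction rate $c$ and an explicit non-negative residual $p^2$. Since $\mathcal{G}(x)=-\nabla f(x)$, in every case the left-hand side $-\inprd{\nabla \mathcal{E}(x),\mathcal{G}(x)} = \inprd{\nabla \mathcal{E}(x),\nabla f(x)}$ is just a matter of computing $\nabla \mathcal{E}$ and then invoking one of the sharp convex-analysis bounds already established in Section~\ref{sec:bd-convex}. So my plan is simply to handle the three candidates one after another.

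For the optimality gap $\mathcal{E}(x)=f(x)-f(x^\star)$, I would note $\nabla \mathcal{E}(x)=\nabla f(x)$, so $-\inprd{\nabla\mathcal{E}(x),\mathcal{G}(x)}=\|\nabla f(x)\|_*^2$. Then I would split this quantity in half and apply the optimality-gap inequality \eqref{eq:optgapmu}, $f(x)-f(x^\star)\leq \tfrac{1}{2\mu}\|\nabla f(x)\|_*^2$, to the first half, giving $\|\nabla f(x)\|_*^2\geq \mu\,\mathcal{E}(x)+\tfrac{1}{2}\|\nabla f(x)\|_*^2$. For the squared distance $\mathcal{E}(x)=\tfrac{1}{2}\|x-x^\star\|^2$, I have $\nabla \mathcal{E}(x)=x-x^\star$, so the left-hand side reduces to $\inprd{\nabla f(x),x-x^\star}$, to which I apply the refined lower bound \eqref{eq:refineMxstar} directly and rewrite $\|x-x^\star\|^2$ as $2\mathcal{E}(x)$.

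For the combined candidate $\mathcal{E}(x)=f(x)-f(x^\star)+\tfrac{\mu}{2}\|x-x^\star\|^2$, the gradient is $\nabla \mathcal{E}(x)=\nabla f(x)+\mu(x-x^\star)$, and I would expand
\[
-\inprd{\nabla\mathcal{E}(x),\mathcal{G}(x)} = \|\nabla f(x)\|_*^2 + \mu\,\inprd{\nabla f(x),x-x^\star}.
\]
The cross term is controlled by \eqref{eq:Mxstar}, which gives $\inprd{\nabla f(x),x-x^\star}\geq f(x)-f(x^\star)+\tfrac{\mu}{2}\|x-x^\star\|^2 = \mathcal{E}(x)$. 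Substituting yields the stated inequality.

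There is no real obstacle here; each case is a one-line consequence of a previously established bound. The only mild subtlety is choosing how to partition $\|\nabla f(x)\|_*^2$ in the first case so that both a multiple of $\mathcal{E}(x)$ and a strictly positive residual $p^2$ remain, and recognizing in the third case that \eqref{eq:Mxstar} is precisely tuned to match $\mathcal{E}(x)$ with weight $\mu$. Once the three inequalities are in hand, the exponential decay conclusion for $\mathcal{E}(x(t))$ and for $\|\nabla f(x(t))\|_*$ follows immediately from Theorem~\ref{thm:strongLya} with $q=1$ and the identified $p(x)$.
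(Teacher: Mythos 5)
Your proof is correct and follows essentially the same route as the paper: each of the three candidates is handled by computing $\nabla\mathcal E$, reducing the Lyapunov quantity to $\|\nabla f\|_*^2$, $\inprd{\nabla f(x),x-x^\star}$, or their sum, and invoking \eqref{eq:optgapmu}, \eqref{eq:refineMxstar}, and \eqref{eq:Mxstar} respectively, with the same $1/2$-split of $\|\nabla f(x)\|_*^2$ in the first case. Nothing to change.
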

	
	When verifying the strong Lyapunov property, we retain an extra positive term \( p^2 = C \|\nabla f(x)\|_*^2 \), which is useful for analyzing the gradient descent method.
	
	\subsubsection{\bf Convex case}
	When \( \mu = 0 \), the strong Lyapunov properties mentioned earlier degenerate. Define the sub-level set of \( f \) for a given constant value \( c \) as:  
	\[
	S_c(f) = \{x \colon f(x) \leq c\}.
	\]  
	Since \( f \) is convex, the sub-level set \( S_c(f) \) is also convex. The set of minimizers of \( f \), where it attains its minimum value \( f_{\min} \), can be expressed as \( S_{f_{\min}}(f) \).  
	
	\begin{lemma}\label{eq:R0} 
		Let \( f \) be convex and coercive. For a given finite value \( f_0 \), there exists a constant \( R_0 \) such that  
		\begin{equation}\label{eq:BL}  
			\max_{x^\star \in \argmin f} \max_{x \in S_{f_0}} \|x - x^\star\| \leq R_0.  
		\end{equation}  
	\end{lemma}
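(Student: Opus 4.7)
The plan is to exploit the coercivity of $f$ to bound the sublevel set $S_{f_0}(f)$, note that $\argmin f$ sits inside this sublevel set, and then conclude by the triangle inequality. The whole argument is standard; there is no significant obstacle, and the only subtlety is the trivial case where $S_{f_0}(f)$ is empty, in which the claim is vacuous.

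First, I would show that $S_{f_0}(f)$ is bounded. Suppose for contradiction that there exists a sequence $\{x_n\} \subset S_{f_0}(f)$ with $\|x_n\| \to \infty$. Since $f$ is coercive, $f(x_n) \to \infty$, contradicting $f(x_n) \leq f_0$. Hence there exists $M > 0$ with $\|x\| \leq M$ for every $x \in S_{f_0}(f)$.

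Second, by Theorem \ref{thm:wellposedS3}, the coercive convex function $f$ attains its minimum, so $\argmin f = S_{f_{\min}}(f)$ is nonempty. Assuming the statement is non-vacuous, we have $f_{\min} \leq f_0$, so $\argmin f \subseteq S_{f_0}(f)$; in particular $\|x^\star\| \leq M$ for every $x^\star \in \argmin f$.

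Finally, the triangle inequality gives $\|x - x^\star\| \leq \|x\| + \|x^\star\| \leq 2M$ for every $x \in S_{f_0}(f)$ and every $x^\star \in \argmin f$, so we may take $R_0 := 2M$. This establishes the uniform bound \eqref{eq:BL}.
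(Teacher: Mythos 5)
Your proof is correct and follows essentially the same route as the paper: show $S_{f_0}(f)$ is bounded via coercivity, observe $\argmin f \subseteq S_{f_0}(f)$, and conclude. Your explicit triangle-inequality step and handling of the vacuous case are minor elaborations of what the paper leaves implicit.
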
  
	\begin{proof}  
		Since \( f \) is coercive, we claim that \( S_{f_0} \) is bounded. If \( S_{f_0} \) were unbounded, we could construct a sequence \( \{x_n\} \) such that \( f(x_n) \leq f_0 \) but \( \|x_n\| > n \) for \( n = 1, 2, \ldots \), which would contradict the coercivity of \( f \).  
		
As \( \argmin f \subseteq S_{f_0} \), \( \argmin f \) is also bounded and convex. Thus, \eqref{eq:BL} holds.  
	$\Box$\end{proof}  
	
	\begin{proposition}
		Let \( f \) be convex and coercive. For $\mathcal G(x) = - \nabla f(x)$, we have the following strong Lyapunov function  \( \mathcal E(x) = f(x) - f(x^\star) \), where \( x^\star \) is an arbitrary but fixed point in the minimum set \( \argmin f \),
		\begin{equation}\label{eq:diff-L-gf}  
			-\inprd{ \nabla \mathcal{E}(x),\mathcal{G}(x)} =  \|\nabla f(x)\|_*^2 \geq \frac{1}{R_0^2} \mathcal E^2(x) \quad \forall x \in S_{f_0}(f),
		\end{equation}  
		where $R_0$ is defined by \eqref{eq:BL} and  \( f_0 = f(x_0) \). Consequently the trajectory of the gradient flow \( x(t) \) satisfies 
		\begin{equation}
			f(x(t)) - f(x^\star)  \leq \frac{R_0^2}{t + R_0^2/\mathcal E(x(0))}, \quad \forall t > 0.
		\end{equation}
	\end{proposition}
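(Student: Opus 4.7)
\bigskip

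\noindent\textbf{Proof Proposal.} The plan is to verify the claimed strong Lyapunov property by a direct convexity argument, confirm that the gradient flow trajectory stays inside the sub-level set $S_{f_0}(f)$ so that $R_0$ is a uniform bound, and then invoke Theorem~\ref{thm:strongLya} with exponent $q=2$ to extract the $\mathcal{O}(1/t)$ convergence.

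First I would compute the left-hand side of the Lyapunov inequality. Since $\nabla \mathcal{E}(x)=\nabla f(x)$ and $\mathcal{G}(x)=-\nabla f(x)$, we have $-\inprd{\nabla \mathcal{E}(x),\mathcal{G}(x)} = \|\nabla f(x)\|_*^2$, which is precisely the equality in \eqref{eq:diff-L-gf}. To obtain the lower bound in terms of $\mathcal{E}^2$, fix any $x^\star\in\argmin f$ and use convexity of $f$ to write
\[
\mathcal{E}(x) = f(x)-f(x^\star) \leqslant \inprd{\nabla f(x),\,x-x^\star} \leqslant \|\nabla f(x)\|_*\,\|x-x^\star\|.
\]
Provided $x\in S_{f_0}(f)$, Lemma~\ref{eq:R0} gives $\|x-x^\star\|\leqslant R_0$, so $\mathcal{E}(x)\leqslant R_0\|\nabla f(x)\|_*$ and hence $\|\nabla f(x)\|_*^2\geqslant R_0^{-2}\mathcal{E}^2(x)$. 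This establishes \eqref{eq:diff-L-gf}; note that we also pick up a ``free'' positive term $p^2(x)=\tfrac{1}{2}\|\nabla f(x)\|_*^2$ by splitting $\|\nabla f(x)\|_*^2$ as before, but this is not needed for the stated bound.

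Next I would verify that the trajectory $x(t)$ of $x'=-\nabla f(x)$ stays in $S_{f_0}(f)$ so that the above inequality is indeed valid along the whole flow. This is immediate: $\tfrac{\mathrm{d}}{\mathrm{d}t}f(x(t))=-\|\nabla f(x(t))\|_*^2\leqslant 0$, so $f(x(t))\leqslant f(x_0)=f_0$ for all $t\geqslant 0$, i.e.\ $\{x(t):t\geqslant 0\}\subset S_{f_0}(f)$. With the hypothesis of Theorem~\ref{thm:strongLya} satisfied for $q=2$, $c=1/R_0^2$, and $p\equiv 0$, the theorem yields
\[
\mathcal{E}(x(t)) \leqslant \Bigl((q-1)c\,t + \mathcal{E}_0^{1-q}\Bigr)^{1/(1-q)} = \Bigl(t/R_0^2 + 1/\mathcal{E}(x(0))\Bigr)^{-1} = \frac{R_0^2}{t + R_0^2/\mathcal{E}(x(0))},
\]
which is exactly the stated sublinear rate.

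The only subtle point is the interplay between the choice of $x^\star$ and the constant $R_0$: the Lyapunov inequality must hold for the particular $x^\star$ used in defining $\mathcal{E}$, but Lemma~\ref{eq:R0} supplies a uniform $R_0$ valid for every $x^\star\in\argmin f$ and every $x\in S_{f_0}(f)$, so this is not an obstacle. The main conceptual step—and the only nontrivial one—is recognizing that without strong convexity one loses the linear lower bound $\mu\mathcal{E}$ and must instead rely on the quadratic bound $\mathcal{E}^2$ coming from the diameter $R_0$ of the sub-level set, which triggers the $q=2$ branch of Theorem~\ref{thm:strongLya} and thus the $\mathcal{O}(1/t)$ rate rather than exponential decay.
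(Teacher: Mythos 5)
Your proof is correct and follows essentially the same route as the paper: convexity gives $\mathcal{E}(x)\leqslant\langle\nabla f(x),x-x^\star\rangle\leqslant R_0\|\nabla f(x)\|_*$ on $S_{f_0}(f)$, monotonicity of $f$ along the flow keeps the trajectory in $S_{f_0}(f)$, and the $q=2$ branch of Theorem~\ref{thm:strongLya} delivers the $\mathcal{O}(1/t)$ rate. No gaps.
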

	\begin{proof}
		For \( \mathcal E(x) = f(x) - f(x^\star) \) assuming coercivity and convexity of $f$, we have  
		\begin{equation}\label{eq:upp0}  
			f(x) - f(x^\star) \leq \langle \nabla f(x), x - x^\star \rangle \leq R_0 \|\nabla f(x)\|_* \quad \forall x \in S_{f_0}(f),
		\end{equation}  
		which is equivalent to the strong Lyapunov property \eqref{eq:diff-L-gf}.
		
		Since \( -\inprd{ \nabla \mathcal{E}(x),\mathcal{G}(x)} = \|\nabla f(x)\|_*^2 \geq 0 \), the trajectory of the gradient flow \( x(t) \) satisfies \( x(t) \in S_{f_0}(f) \). Applying Theorem \ref{thm:strongLya}, we conclude that the optimality gap \( f(x(t)) - f(x^\star) \) decays at the sublinear rate \( O(1/t) \) along the trajectory of the gradient flow.  
	$\Box$\end{proof}
	
\subsection{Proximal Point Algorithm}
	When \( \mathcal{G}(x) = -\nabla f(x) \), the implicit Euler method applied to the gradient flow:  
	\begin{equation}\label{eq:im}  
		x_{k+1} = x_k - \alpha \nabla f(x_{k+1})  
	\end{equation}  
	can be reformulated as:  
	\begin{equation}\label{eq:im-ge}  
		x_{k+1} = \operatorname{prox}_{\alpha f}(x_k) := \mathop{\arg\min}\limits_x \left\{ f(x) + \frac{1}{2\alpha} \|x - x_k\|^2 \right\},  
	\end{equation}  
	which is known as the proximal point algorithm (PPA)~\cite{rockafellar_monotone_1976}.  	

With the strong Lyapunov property, the convergence of the proximal point algorithm is straightforward by combining Theorem \ref{th:implicitEuler} and Proposition \ref{pro:stLygd} for $\mu$-strongly convex case. Even $\mu = 0$, linear convergence of PPA can be still obtained with a refined Lyapunov function; see Theorem \ref{thm:im-sgf-non}. 
			
	\subsection{Gradient Descent Methods}
Next, we present the convergence analysis for the explicit Euler method applied to the gradient flow, which corresponds to the gradient descent method.
	
	\begin{theorem}\label{thm:conv-GD}
		Assume \( f \in \mathcal S_{\mu,L}^{1,1} \) with \( 0 < \mu \leq L < \infty \). Let \( \{x_k\} \) be the sequence generated by  
		\begin{equation}\label{eq:GD}
			x_{k+1} = x_k - \alpha_k \nabla f(x_k).
		\end{equation}
		For \( \alpha_k \leq 2/(L+\mu) \), we have
		\[
		\mathcal E_{k+1} \leq (1 - \mu \alpha_k) \mathcal E_{k},
		\]
		where \( \mathcal E(x) = f(x) - f(x^\star) + \frac{\mu}{2} \|x - x^\star\|^2 \) and \( \mathcal E_{k} = \mathcal E(x_k) \).  
		
		The optimal value \( \alpha_k = 2/(L+\mu) \) gives 
		\[
		\mathcal E_{k+1} \leq \frac{L-\mu}{L+\mu} \, \mathcal E_{k},
		\]
		and the quasi-optimal value \( \alpha_k = 1/L \) gives
		\[
		\mathcal E_{k+1} \leq (1 - \mu / L) \mathcal E_{k}.
		\]
	\end{theorem}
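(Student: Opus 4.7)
The plan is to mirror the proof of Theorem~\ref{th:implicitEuler} but correct for the fact that the explicit Euler scheme evaluates the vector field at $x_k$ rather than $x_{k+1}$. The natural identity to start from is the three--point expansion of a smooth convex function along the iterates:
\[
\mathcal E_{k+1}-\mathcal E_k = \inprd{\nabla \mathcal E(x_k),x_{k+1}-x_k} + D_{\mathcal E}(x_{k+1},x_k).
\]
Substituting the explicit Euler update $x_{k+1}-x_k = \alpha_k\mathcal G(x_k) = -\alpha_k\nabla f(x_k)$ in the linear term and invoking the strong Lyapunov property \eqref{eq:Lt-cond-gd} gives
\[
\alpha_k \inprd{\nabla \mathcal E(x_k),\mathcal G(x_k)} \leqslant -\alpha_k\mu\,\mathcal E_k - \alpha_k\|\nabla f(x_k)\|_*^2.
\]
So the essential task is to bound the Bregman remainder $D_{\mathcal E}(x_{k+1},x_k)$ by the spare positive term $\alpha_k\|\nabla f(x_k)\|_*^2$ we kept in the strong Lyapunov condition.

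Using linearity of the Bregman divergence with respect to $f$, a direct computation gives the exact identity
\[
D_{\mathcal E}(y,x) = D_f(y,x) + \frac{\mu}{2}\|y-x\|^2,
\]
and then the upper bound \eqref{eq:DL} applied to $f\in\mathcal C^{1,1}_L$ yields $D_{\mathcal E}(y,x)\leqslant \tfrac{L+\mu}{2}\|y-x\|^2$. Plugging in $x_{k+1}-x_k=-\alpha_k\nabla f(x_k)$,
\[
D_{\mathcal E}(x_{k+1},x_k) \leqslant \frac{(L+\mu)\alpha_k^2}{2}\,\|\nabla f(x_k)\|_*^2.
\]
Collecting the two estimates produces
\[
\mathcal E_{k+1} - \mathcal E_k \leqslant -\alpha_k\mu\,\mathcal E_k - \alpha_k\!\left(1-\frac{(L+\mu)\alpha_k}{2}\right)\|\nabla f(x_k)\|_*^2,
\]
and the assumption $\alpha_k\leqslant 2/(L+\mu)$ makes the coefficient in front of $\|\nabla f(x_k)\|_*^2$ nonnegative, so dropping that term yields $\mathcal E_{k+1}\leqslant (1-\mu\alpha_k)\mathcal E_k$. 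The two specialized rates follow by substitution: $\alpha_k=2/(L+\mu)$ gives $1-\mu\alpha_k=(L-\mu)/(L+\mu)$, while $\alpha_k=1/L$ gives $1-\mu/L$.

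The only step that requires some care is the Bregman calculation showing $D_{\mathcal E}(y,x)=D_f(y,x)+\tfrac{\mu}{2}\|y-x\|^2$ exactly; the second-order cross terms from expanding $\|y-x^\star\|^2-\|x-x^\star\|^2-2\inprd{x-x^\star,y-x}$ must collapse to $\|y-x\|^2$, which they do. Everything else is a mechanical application of the strong Lyapunov property \eqref{eq:Lt-cond-gd} together with the descent-lemma style upper bound \eqref{eq:DL}, so no further obstacles are expected.
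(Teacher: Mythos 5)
Your proof is correct and takes essentially the same route as the paper: expand the difference $\mathcal E_{k+1}-\mathcal E_k$ via Bregman divergence, insert the explicit Euler step into the linear term, invoke the strong Lyapunov property \eqref{eq:Lt-cond-gd} to extract the $-\mu\alpha_k\mathcal E_k$ decay plus a spare $\|\nabla f(x_k)\|_*^2$, and kill the Bregman remainder with the $(L+\mu)$-smoothness bound. The only cosmetic difference is that you derive $D_{\mathcal E}(y,x)\leq\frac{L+\mu}{2}\|y-x\|^2$ by explicitly splitting $D_{\mathcal E}=D_f+\frac{\mu}{2}\|\cdot\|^2$, whereas the paper simply notes $\mathcal E\in\mathcal S^1_{2\mu,L+\mu}$ and applies \eqref{eq:DL} to $\mathcal E$ directly.
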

	\begin{proof}
		We have verified the strong Lyapunov property in \eqref{eq:Lt-cond-gd} with an additional term \( \|\nabla f(x)\|_*^2 \). Note that \( \mathcal E \in \mathcal S^{1}_{2\mu, L+\mu} \). Using the definition of Bregman divergence, the upper bound of \( D_{\mathcal E} \), and the strong Lyapunov condition at \( x_k \), we have
		\begin{align*}
			\mathcal E_{k+1} - \mathcal E_{k} 
			&= \langle \nabla \mathcal E(x_k), x_{k+1} - x_k \rangle + D_{\mathcal E}(x_{k+1}, x_k) \\
			&\leq -\alpha_k \langle \nabla \mathcal E(x_k), \nabla f(x_k) \rangle + \frac{L+\mu}{2} \| x_{k+1} - x_k \|^2 \\
			&\leq -\mu \, \alpha_k \mathcal E_{k} - \alpha_k \left(1 - \frac{L+\mu}{2} \alpha_k\right) \|\nabla f(x_k)\|_*^2.
		\end{align*}
		For \( \alpha_k \leq 2/(L+\mu) \), we have \( \mathcal E_{k+1} - \mathcal E_{k} \leq -\mu \, \alpha_k \mathcal E_{k} \), and the linear convergence follows.
	$\Box$\end{proof}
	
	One can also choose 
	\[
	\mathcal E(x) = f(x) - f(x^\star) \quad \text{or} \quad \mathcal E(x) = \frac{1}{2} \|x - x^\star\|^2,
	\]
	and prove the linear convergence of the gradient descent method using the strong Lyapunov property. Here, we use $\mathcal E(x) = f(x) - f(x^\star)$ to show a sufficient decay property of the function values.
	
	\begin{proposition}
		Assume \( f \in \mathcal S_{\mu,L}^{1,1} \) with \( 0 \leq \mu \leq L < \infty \). 
		For the gradient descent method with $\alpha = 1/L$: 
		\[
		x_{k+1} = x_k - \frac{1}{L} \nabla f(x_k),
		\] 
		we have the function decay estimate
		\begin{equation}\label{eq:fdecay}
			f(x_{k+1}) - f(x_k) \leq -\frac{1}{2L} \left (\|\nabla f(x_{k+1})\|^2 + \|\nabla f(x_k)\|^2\right ).
		\end{equation}
		Consequently, for $\mu > 0$, we have the linear convergence
		\begin{equation}\label{eq:fdecayrate}
			f(x_{k+1}) - f(x^\star) \leq \frac{L-\mu}{L+\mu} \big(f(x_k) - f(x^\star)\big).
		\end{equation}
		For $\mu = 0$, we have the sub-linear convergence
		\begin{equation}\label{eq:fsublineardecayrate}
			f(x_k) - f(x^{\star}) \leq \frac{LR_0^2}{k + LR_0^2/\mathcal E(x_0)}.
		\end{equation}	
	\end{proposition}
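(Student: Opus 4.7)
The three parts build on one another, so I would establish them in order, with the symmetric one-step decay \eqref{eq:fdecay} being the engine that drives both the strongly convex and the convex rates.

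For \eqref{eq:fdecay}, the standard descent lemma only yields $f(x_{k+1})-f(x_k)\le -\frac{1}{2L}\|\nabla f(x_k)\|^2$; to pick up the extra $\|\nabla f(x_{k+1})\|^2$ term I would apply the co-convexity lower bound \eqref{eq:philowerL} (valid since $f\in \mathcal S^1_{0,L}$) to the reversed Bregman divergence
\[
D_f(x_k,x_{k+1})=f(x_k)-f(x_{k+1})-\langle \nabla f(x_{k+1}),x_k-x_{k+1}\rangle \geq \frac{1}{2L}\|\nabla f(x_k)-\nabla f(x_{k+1})\|_*^2.
\]
Substituting the update $x_k-x_{k+1}=\frac{1}{L}\nabla f(x_k)$, the cross term $\tfrac{1}{L}\langle \nabla f(x_k),\nabla f(x_{k+1})\rangle$ that appears from $\langle\nabla f(x_{k+1}),x_k-x_{k+1}\rangle$ cancels exactly against the cross term produced by expanding the squared norm, leaving precisely the symmetric bound \eqref{eq:fdecay}.

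The linear rate \eqref{eq:fdecayrate} for $\mu>0$ follows from \eqref{eq:fdecay} by applying the Polyak--Łojasiewicz type inequality $\|\nabla f(x)\|_*^2\geq 2\mu(f(x)-f(x^\star))$, which is the right half of \eqref{eq:optgapmu}, to both gradient norms on the right-hand side. Writing $\mathcal E_k=f(x_k)-f(x^\star)$, this converts \eqref{eq:fdecay} into $\mathcal E_{k+1}-\mathcal E_k\le -\tfrac{\mu}{L}(\mathcal E_k+\mathcal E_{k+1})$; rearranging gives the contraction factor $(L-\mu)/(L+\mu)$.

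For the sublinear rate \eqref{eq:fsublineardecayrate}, I first note that \eqref{eq:fdecay} implies $\{x_k\}\subset S_{f_0}(f)$ with $f_0=f(x_0)$, since the function values are non-increasing. Then the bound \eqref{eq:upp0} derived via Lemma~\ref{eq:R0} gives $\|\nabla f(x)\|_*^2\ge \mathcal E(x)^2/R_0^2$ on the sub-level set, so \eqref{eq:fdecay} becomes $\mathcal E_k-\mathcal E_{k+1}\ge \tfrac{1}{2LR_0^2}(\mathcal E_k^2+\mathcal E_{k+1}^2)$. The key step, and the only mildly nontrivial one in this part, is the AM-GM inequality $\mathcal E_k^2+\mathcal E_{k+1}^2\geq 2\mathcal E_k\mathcal E_{k+1}$, which makes the right-hand side a product and allows division by $\mathcal E_k\mathcal E_{k+1}$ to produce the telescoping increment $\tfrac{1}{\mathcal E_{k+1}}-\tfrac{1}{\mathcal E_k}\ge \tfrac{1}{LR_0^2}$. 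Summing from $0$ to $k-1$ and solving for $\mathcal E_k$ yields the stated $O(1/k)$ rate. The main subtlety throughout is that the improvements over the textbook gradient-descent analysis all come from extracting information from co-coercivity at step $k+1$, rather than just $L$-smoothness at step $k$.
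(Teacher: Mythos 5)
Your proposal is correct and follows essentially the same route as the paper: expand the one-step difference via the Bregman identity, use co-coercivity (\ref{eq:philowerL}) to absorb the $\frac{1}{2L}\|\nabla f(x_{k+1})-\nabla f(x_k)\|_*^2$ term at $\alpha=1/L$, then feed (\ref{eq:fdecay}) into the PL-type bound $\|\nabla f\|_*^2\geq 2\mu\mathcal E$ for the linear rate, and into $\|\nabla f\|_*^2\geq \mathcal E^2/R_0^2$ with the AM--GM step $\mathcal E_k^2+\mathcal E_{k+1}^2\geq 2\mathcal E_k\mathcal E_{k+1}$ for the sublinear rate. The only cosmetic difference is that you apply co-coercivity to $D_f(x_k,x_{k+1})$ first and then expand the square, whereas the paper first symmetrizes the cross term via the identity of squares and then invokes co-coercivity to cancel the leftover; these are algebraically identical.
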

	\begin{proof}
		Using the Bregman divergence and the identity of squares, we expand the difference:
		\[
		\begin{aligned}
			f(x_{k+1}) - f(x_k) &= \langle \nabla f(x_{k+1}), x_{k+1} - x_k \rangle - D_f(x_k, x_{k+1}) \\
			&= -\alpha \langle \nabla f(x_{k+1}), \nabla f(x_k) \rangle - D_f(x_k, x_{k+1}) \\
			&= -\frac{\alpha}{2} \left (\|\nabla f(x_{k+1})\|_*^2 + \|\nabla f(x_k)\|_*^2\right )\\
			&\quad + \frac{\alpha}{2} \|\nabla f(x_{k+1}) - \nabla f(x_k)\|_*^2 - D_f(x_k, x_{k+1}).
		\end{aligned}
		\]
		Then, using the co-coercivity property \eqref{eq:philowerL} for $\alpha = 1/L$, we cancel the positive term, obtaining \eqref{eq:fdecay}.
		
		For $\mu >0$, we use the strong Lyapunov property $\|\nabla f(x)\|_*^2\geq 2\mu \mathcal E(x)$, cf. \eqref{eq:optgapmu} to derive  \eqref{eq:fdecayrate}. 
		
		For $\mu = 0$, we use the strong Lyapunov property \eqref{eq:diff-L-gf} $ \|\nabla f(x)\|_*^2 \geq \frac{1}{R_0^2} \mathcal E^2(x)$ and rewrite the inequality as
		$$
		\mathcal E_{k+1} - \mathcal E_k \leq -\frac{1}{2LR_0^2} (\mathcal E^2_{k+1} + \mathcal E^2_{k}) \leq -\frac{1}{LR_0^2}\mathcal E_{k+1}\mathcal E_k,
		$$
		which is equivalent to 
		\begin{equation}\label{eq:diffEk}
			\frac{1}{\mathcal E_{k+1}} - \frac{1}{\mathcal E_k} \geq \frac{1}{LR_0^2}.
		\end{equation}
		Summing \eqref{eq:diffEk} leads to \eqref{eq:fsublineardecayrate}.
	$\Box$\end{proof}

	\section{Variable and Operator Splitting}\label{sec:splitting}
	In this section we combine variable splitting, where we introduce a new variable $y$ that converges to $x$, with operator splitting, where we decompose $\mathcal{L}(x)$ into $\nabla F(x) + N(y)$, to study accelerated gradient methods. We then apply to strongly convex optimization, composite convex optimization, and saddle point problems with bilinear coupling.
	
	\subsection{Flow for variable and operator splitting}
	Consider the operator equation
	\[
	\mathcal{L}(x) = \nabla F(x) + N(x) = 0,
	\]
	where \(F\) is convex, and \(N\) is \(\mu\)-strongly monotone:  
	\begin{equation}\label{eq:Nmu}
		\big(N(x) - N(y), x - y\big) \geq \mu \|x - y\|^2.
	\end{equation}
	If instead \(F\) is \(\mu\)-strongly convex and \(N\) is monotone only, we can set  
	\[
	F(x) \leftarrow F(x) - \frac{\mu}{2}\|x\|^2, \quad N(x) \leftarrow N(x) + \mu x.
	\]
	This transformation moves the $\mu$-convexity from $F$ to $N$ so that the conditions required are satisfied. We will deal with the convex case $\mu = 0$ in Section \ref{sec:Scale-GD-Prox}.
	
	Besides operator splitting, we introduce a variable splitting technique. This method introduces a new variable \(y\), which converges to \(x\) but remains distinct as a separate variable. In the operator splitting, one operator \(\nabla F(x)\) is associated with \(x\), while the other, $N(y)$, is associated with \(y\). This splitting leads to the following flow dynamics, referred to as the Variable and Operator Splitting (VOS) flow:
	\begin{equation}\label{eq:splitflow}
		\left\{
		\begin{aligned}
			x^\prime &= y - x, && \text{(variable splitting)} \\
			y^\prime &= -\frac{1}{\mu} \big(\nabla F(x) + N(y)\big). && \text{(operator splitting)}
		\end{aligned}
		\right.
	\end{equation}
	The equilibrium point satisfies \(x = y = x^\star\) and \(\mathcal{L}(x^\star) = \nabla F(x^\star) + N(x^\star) = 0\).  
	
	Define \(\mathcal{G}(x, y) = \big(y - x, -\frac{1}{\mu} \big(\nabla F(x) + N(y)\big)\big)^\intercal\). Then, \((x^\star, x^\star)\) is an equilibrium point of \(\mathcal{G}\), i.e.,  $\mathcal{G}(x^\star, x^\star) = 0.$
	Therefore we can rewrite \(\mathcal{G}(x, y)\) as follows, without explicitly requiring knowledge of \(x^\star\):  
	\[
	\mathcal{G}(x, y) = 
	\begin{pmatrix}
		y - x^\star - (x - x^\star) \\
		-\mu^{-1} \big(\nabla F(x) - \nabla F(x^\star)\big) - \mu^{-1} \big(N(y) - N(x^\star)\big)
	\end{pmatrix}.
	\]

	\subsection{Strong Lyapunov Functions}
	
	To analyze the stability and convergence of the system \eqref{eq:splitflow}, we introduce the Lyapunov function:
	\begin{equation}\label{eq:DFmuy}
		\mathcal{E}(x, y) = D_F(x, x^{\star}) + \frac{\mu}{2}\| y - x^{\star} \|^2,
	\end{equation}
	where $D_F(x, x^{\star})$ is the Bregman divergence associated with the convex function $F$.
	
	To illustrate, consider the linear case where $\nabla D_F(x, x^\star) = A(x - x^\star)$ with $0 \leq \lambda(A) \leq L_F$ and $N$ is a matrix with $y^{\intercal}(\sym N) y\geq \mu \|y\|^2$. To verify the strong Lyapunov property, we calculate:
	$$
	\underbrace{
		\begin{pmatrix}
			x - x^{\star}\\
			y - x^{\star}
		\end{pmatrix}^{\intercal}
		\begin{pmatrix}
			A & 0 \\
			0 & \mu I  
	\end{pmatrix}}_{\nabla \mathcal{E}}
	\underbrace{
		\begin{pmatrix}
			I & -I  \\
			\mu^{-1}A & \mu^{-1}N 
		\end{pmatrix}
		\begin{pmatrix}
			x - x^{\star}\\
			y - x^{\star}
	\end{pmatrix}}_{- \mathcal{G}}
	=
	\bs e^{\intercal}
	\begin{pmatrix}
		A & -A   \\
		A & N  
	\end{pmatrix}
	\bs e,
	$$
	where $\bs e = (x-x^{\star}, y-x^{\star})^{\intercal}$. 
	For a quadratic form, $\boldsymbol e^{\intercal}M\boldsymbol e = \boldsymbol e^{\intercal}{\rm sym}(M)\boldsymbol e$. So its symmetric part is:
	\begin{align*}
		\bs e^{\intercal}
		\mathrm{sym} \begin{pmatrix}
			A & -A  \\
			A & N 
		\end{pmatrix}
		\bs e
		= 
		\bs e^{\intercal}
		\begin{pmatrix}
			A & 0  \\
			0 & \mathrm{sym} N  
		\end{pmatrix}
		\bs e
		\geq
		\bs e^{\intercal}
		\begin{pmatrix}
			A & 0  \\
			0 & \mu I
		\end{pmatrix}\bs e
		= 2\mathcal E.
	\end{align*}
	The linear case can be helpful for the design of a new system and guide the verification of the strong Lyapunov condition for the nonlinear case.
	
	We will use a linear example to explain the acceleration through the change in the spectral radius. Consider $N = \mu I$ and write $A = A+\mu I - \mu I = A_{+\mu} - \mu I$. Then the vector field is defined by the following matrix
	$$
	G = 
	\begin{pmatrix}
		-I & I\\
		I - A_{+\mu}/\mu & -I
	\end{pmatrix}.
	$$
	Let $\kappa = \kappa(A_{+\mu}) \geq 1$. The spectral radius of matrix $G$ will be given by that of the following matrix   
	\[
	R =
	\begin{pmatrix}
		-1 & 1\\
		1-\kappa  & -1
	\end{pmatrix}.
	\]
	The eigenvalue $\lambda$ of $R$ satisfies the equation  
	\[
	\lambda^2 - (\operatorname{tr} R)\,\lambda + \det R = 0.
	\]
	Since $\snm{\operatorname{tr} R}^2 \leqslant 4\det R$, the eigenvalues come in complex conjugate pairs. For complex eigenvalues, $\lambda \bar{\lambda} = |\lambda|^2 = \det R = \kappa$ and $\lambda + \bar{\lambda} = 2\mathfrak{Re}(\lambda) = \operatorname{tr} R$. Thus,  
	\[
	\snm{\lambda} = \sqrt{\det R} = \sqrt{\kappa}, \quad 
	\mathfrak{Re}(\lambda) = \operatorname{tr} R / 2 < 0.
	\]
	The negative real part ensures the exponential decay. The spectral radius of $G$ is now reduced to $\sqrt{\kappa}$. 
	
	We are aware of the limitations of the spectral analysis for the linear system near $x^{\star}$. For nonlinear ODE systems, transient growth or instability in perturbed problems can easily lead to nonlinear instabilities. This is why we employ Lyapunov analysis and the strong Lyapunov property.

	\begin{theorem}\label{th:SLy1}
		Let $\mathcal E$ be the Lyapunov function \eqref{eq:DFmuy}, and let $\mathcal G$ be the vector field of the VOS flow \eqref{eq:splitflow}. The following strong Lyapunov property holds:
		\begin{equation}\label{eq:SLy1}
			-\inprd{ \nabla \mathcal{E}(x, y), \mathcal{G}(x, y)} \geq \mathcal{E}(x, y) + D_F(x^{\star}, x) + \frac{\mu}{2}\|y - x^{\star}\|^2,
		\end{equation}
		which ensures the exponential decay of the Lyapunov function $\mathcal{E}(x, y) \leq \mathcal{E}_0 e^{-t}$.
	\end{theorem}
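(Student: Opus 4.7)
The plan is to verify the inequality by direct expansion of the inner product $\inprd{\nabla \mathcal{E}(x,y), \mathcal{G}(x,y)}$, using the optimality relation $\nabla F(x^\star) + N(x^\star) = 0$ to introduce differences against $x^\star$, and then applying the symmetrized Bregman divergence identity together with the strong monotonicity of $N$. No serious technical obstacle is expected; the only subtlety is arranging the variables so that the cross terms collapse into the correct symmetric combination.

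First I would compute the gradient of $\mathcal{E}$ explicitly, namely $\nabla_x \mathcal{E}(x,y) = \nabla F(x) - \nabla F(x^\star)$ and $\nabla_y \mathcal{E}(x,y) = \mu(y - x^\star)$. Since $\mathcal{L}(x^\star) = 0$, I may rewrite the second component of $\mathcal{G}$ as
\[
-\mathcal{G}_y = \tfrac{1}{\mu}\bigl(\nabla F(x) - \nabla F(x^\star)\bigr) + \tfrac{1}{\mu}\bigl(N(y) - N(x^\star)\bigr),
\]
which is the form that makes the subsequent bookkeeping transparent.

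Next I would pair $\nabla \mathcal{E}$ with $-\mathcal{G}$ term by term. The $y$-component gives
\[
\bigl(y - x^\star,\ \nabla F(x) - \nabla F(x^\star)\bigr) + \bigl(y - x^\star,\ N(y) - N(x^\star)\bigr),
\]
while the $x$-component contributes $-\inprd{\nabla F(x) - \nabla F(x^\star),\ y - x}$. Adding the $x$-component to the first piece of the $y$-component produces the telescoping sum $\inprd{\nabla F(x) - \nabla F(x^\star),\ (y-x^\star) - (y-x)} = \inprd{\nabla F(x) - \nabla F(x^\star),\ x - x^\star}$, which by the definition of the symmetrized Bregman divergence equals $2 M_{\nabla F}(x,x^\star) = D_F(x,x^\star) + D_F(x^\star,x)$. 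For the remaining piece I invoke the $\mu$-strong monotonicity \eqref{eq:Nmu} to obtain $(y - x^\star, N(y) - N(x^\star)) \geq \mu \|y - x^\star\|^2$.

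Collecting these estimates yields
\[
-\inprd{\nabla \mathcal{E}(x,y), \mathcal{G}(x,y)} \geq D_F(x,x^\star) + D_F(x^\star,x) + \mu \|y - x^\star\|^2,
\]
which equals $\mathcal{E}(x,y) + D_F(x^\star,x) + \frac{\mu}{2}\|y - x^\star\|^2$ by the definition of $\mathcal{E}$ in \eqref{eq:DFmuy}. Since the right-hand side dominates $\mathcal{E}(x,y)$, the strong Lyapunov property with constant $c = 1$ follows, and Theorem~\ref{thm:strongLya} then delivers the exponential decay $\mathcal{E}(x(t), y(t)) \leq \mathcal{E}_0 e^{-t}$. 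The main obstacle, if any, is simply recognizing the need to split $\nabla F(x) + N(y)$ against $\nabla F(x^\star) + N(x^\star)$ so that the $x$- and $y$-components of the inner product combine into a single symmetric Bregman term rather than producing mixed terms that require further cancellation.
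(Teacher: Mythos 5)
Your proof is correct and follows essentially the same approach as the paper: split against $\nabla F(x^\star) + N(x^\star) = 0$, observe that the cross term $\inprd{\nabla F(x) - \nabla F(x^\star), y - x^\star}$ cancels, use the symmetrized Bregman identity $\inprd{\nabla F(x) - \nabla F(x^\star), x - x^\star} = D_F(x,x^\star) + D_F(x^\star,x)$, and then invoke the $\mu$-strong monotonicity of $N$. The paper's proof is more terse but relies on exactly the same cancellation and the same two bounds.
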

	\begin{proof}
		By direct calculation, the cross term $\langle \nabla F(x) - \nabla F(x^{\star}), y - x^{\star} \rangle$ cancels due to the sign change. Therefore,
		\begin{equation*}
			\begin{aligned}
				- \inprd{ \nabla \mathcal{E}(x, y), \mathcal{G}(x, y)} = {}&
				\langle \nabla F(x) - \nabla F(x^{\star}), x - x^{\star} \rangle + \langle y - x^{\star}, N(y) - N(x^{\star}) \rangle \\
				\geq {}& \mathcal{E}(x, y) + D_{F}(x^{\star}, x) + \frac{\mu}{2} \|y - x^{\star}\|^2.
			\end{aligned}
		\end{equation*}
This completes the proof.		
	$\Box$\end{proof}
		
	Notice that the Lyapunov function does not enter the flow. For the same vector field $\mathcal{G}$, we may design other Lyapunov functions. For example, consider
	\begin{equation}\label{eq:Dfmuy}
		\mathcal{E}(x, y) = D_f(x, x^{\star}) + \frac{\mu}{2}\| y - x^{\star} \|^2, \end{equation}
	where $f\in \mathcal{S}_{\mu}^1$ satisfies
	$$
	\nabla f(x) = \nabla F(x) + \mu x, \quad \iff \nabla F(x) = \nabla f(x) - \mu x.
	$$
	We have the following strong Lyapunov property.
	
	\begin{theorem}\label{th:SLy2}
		Let $\mathcal{E}$ be the Lyapunov function \eqref{eq:Dfmuy}, and let $\mathcal{G} = \big(y - x, - \mu^{-1} \big (\nabla f(x) - \mu x + N(y)\big) \big)^\intercal$ be the vector field of the VOS flow \eqref{eq:splitflow}. The following strong Lyapunov property holds:
		\begin{equation}\label{eq:SLyf}
			- \inprd{\nabla \mathcal{E}(x, y), \mathcal{G}(x, y)} \geq \mathcal{E}(x, y) + \frac{\mu}{2}\|y - x\|^2,
		\end{equation}
		which ensures the exponential decay of the Lyapunov function $\mathcal{E}(x, y) \leq \mathcal{E}_0 e^{-t}$ and the exponential decay of variable difference $\|y - x\|^2 \leq C_0 e^{-t}$.
	\end{theorem}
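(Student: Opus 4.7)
The plan is to imitate the computation from the proof of Theorem~\ref{th:SLy1}, but now I have to handle the extra quadratic term $\frac{\mu}{2}\|x-x^{\star}\|^2$ that is implicitly present in $D_f$ as compared to $D_F$. The key observation is that $f = F + \frac{\mu}{2}\|\cdot\|^2$ so that $D_f(x,x^{\star}) = D_F(x,x^{\star}) + \frac{\mu}{2}\|x-x^{\star}\|^2$ and $\nabla f(x) - \nabla f(x^{\star}) = \nabla F(x) - \nabla F(x^{\star}) + \mu(x - x^{\star})$. Also, because $x^{\star}$ solves $\mathcal{L}(x^{\star})=0$, we have $\nabla f(x^{\star}) - \mu x^{\star} = -N(x^{\star})$, which lets us rewrite the $y$-component of $\mathcal G$ in the centered form $-\mu^{-1}\bigl((\nabla F(x)-\nabla F(x^{\star})) + (N(y) - N(x^{\star}))\bigr)$.

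First I would compute $\nabla\mathcal E = (\nabla f(x)-\nabla f(x^{\star}),\, \mu(y-x^{\star}))^{\intercal}$ and substitute into $-\inprd{\nabla\mathcal E,\mathcal G}$. Splitting $\nabla f(x) - \nabla f(x^{\star}) = (\nabla F(x)-\nabla F(x^{\star})) + \mu(x-x^{\star})$ produces four terms: the two cross terms involving $\nabla F(x) - \nabla F(x^{\star})$ combine (as in Theorem~\ref{th:SLy1}) into $\inprd{\nabla F(x)-\nabla F(x^{\star}),\, x - x^{\star}} = D_F(x,x^{\star}) + D_F(x^{\star},x)$; the $N$-term is bounded below by $\mu\|y-x^{\star}\|^2$ via the $\mu$-strong monotonicity assumption~\eqref{eq:Nmu}; the remaining scalar-valued piece is $-\mu\inprd{x-x^{\star},\,y-x}$, which I expand through the identity of squares~\eqref{eq:squares}.

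Next I would assemble the pieces. The identity of squares gives $-\mu\inprd{x-x^{\star},y-x} = \frac{\mu}{2}\|x-x^{\star}\|^2 + \frac{\mu}{2}\|y-x\|^2 - \frac{\mu}{2}\|y-x^{\star}\|^2$, and this is exactly where the acceleration-type term $\frac{\mu}{2}\|y-x\|^2$ in the conclusion is born. Adding this to $\mu\|y-x^{\star}\|^2$ leaves $\frac{\mu}{2}\|y-x^{\star}\|^2$, which combines with $D_F(x,x^{\star}) + \frac{\mu}{2}\|x-x^{\star}\|^2 = D_f(x,x^{\star})$ to produce precisely $\mathcal E(x,y)$. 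The leftover $D_F(x^{\star},x) \geq 0$ is harmless (it strengthens the bound but is dropped to match the stated form).

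The main obstacle, as I see it, is bookkeeping rather than analysis: one must be careful to avoid accidentally double-counting the $\mu(x-x^{\star})$ shift, since that same quantity appears both in $\nabla f(x) - \nabla f(x^{\star})$ (through the $x$-derivative of $\mathcal E$) and in the modified vector field (through the $-\mu x$ correction). The simplest safeguard is to keep everything in centered form $\nabla F(x) - \nabla F(x^{\star})$ and $N(y) - N(x^{\star})$ from the outset, and only introduce $f$ at the very end when recognizing $\mathcal E$. The exponential decay of $\mathcal E$ and of $\|y-x\|^2$ then follows directly from Theorem~\ref{thm:strongLya} with $c=1$ and $p^2(x,y) = \frac{\mu}{2}\|y-x\|^2$.
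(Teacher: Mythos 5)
Your derivation of the strong Lyapunov inequality is correct and follows the same route as the paper's proof; the only difference is presentational (you carry $\nabla F(x)-\nabla F(x^\star)$ and $\mu(x-x^\star)$ separately and re-assemble $D_f$ at the end, whereas the paper keeps $\nabla f(x)-\nabla f(x^\star)$ intact and applies the bound $\inprd{\nabla f(x)-\nabla f(x^\star),x-x^\star}\geq D_f(x,x^\star)+\tfrac{\mu}{2}\|x-x^\star\|^2$ directly). Both arrangements reduce to the same cancellation, identity of squares on the scalar $\mu$-cross term, and strong monotonicity of $N$.

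One small caveat in your final sentence: Theorem~\ref{thm:strongLya} with $p^2(x,y)=\tfrac{\mu}{2}\|y-x\|^2$ gives the exponential decay of $\mathcal E$ and a bound on the \emph{time-weighted integral} of $\|y-x\|^2$, not a pointwise bound. The pointwise decay $\|y(t)-x(t)\|^2\leq C_0 e^{-t}$ follows instead from the lower bound $\mathcal E(x,y)\geq \tfrac{\mu}{2}\|x-x^\star\|^2+\tfrac{\mu}{2}\|y-x^\star\|^2\geq \tfrac{\mu}{4}\|y-x\|^2$ (using $\mu$-strong convexity of $f$) together with the already-established decay of $\mathcal E$.
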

	\begin{proof}
		The cross term $\langle \nabla f(x) - \nabla f(x^{\star}), y - x^{\star} \rangle$ is canceled as before, but this time additional cross terms arise:
		\begin{equation*}
			\begin{aligned}
				-\inprd{\nabla \mathcal{E}(x, y), \mathcal{G}(x, y)}  
				= {}& \langle \nabla f(x) - \nabla f(x^{\star}), x - x^{\star} \rangle + \langle y - x^{\star}, N(y) - N(x^{\star}) \rangle \\
				& - \mu \langle y - x^{\star}, x - x^{\star} \rangle.
			\end{aligned}
		\end{equation*}
		
		Using the identity of squares, the additional cross term expands as:
		$$
		- \mu \langle y - x^{\star}, x - x^{\star} \rangle = -\frac{\mu}{2} \left(\| x - x^{\star} \|^2  + \| y - x^{\star}\|^2 - \| x - y \|^2 \right).
		$$
		The symmetric Bregman divergence is bounded below as:
		$$
		\langle \nabla f(x) - \nabla f(x^{\star}), x - x^{\star} \rangle \geq D_f(x, x^{\star}) + \frac{\mu}{2}\| x - x^{\star} \|^2.
		$$
		Finally, the $\mu$-monotonicity of $N$ ensures that $-\frac{\mu}{2}\|y - x^{\star}\|^2$ cancels out. This yields the desired inequality \eqref{eq:SLyf}.
	$\Box$\end{proof}
	
	Again the calculation is more clear when $\nabla f(x) = Ax$ is linear and $N$ is also linear. Then $- \inprd{\nabla \mathcal E(x,y),\mathcal G(x,y)}$ is a quadratic form $\bs e^{\intercal}M\bs e = \bs e^{\intercal}(\sym M)\bs e$ with $\bs e= (x - x^{\star}, y -x^{\star})^{\intercal}$. We calculate the matrix $M$ as follows
	$$
	\begin{pmatrix}
		A & 0 \\
		0  & \mu I
	\end{pmatrix}
	\begin{pmatrix}
		I & - I \\
		- I + \mu^{-1} A &   \quad \mu^{-1} N
	\end{pmatrix}
	=
	\begin{pmatrix}
		A & - A \\
		- \mu I + A & \;  N
	\end{pmatrix}.
	$$
	Its symmetric part is
	\begin{align*}
		\sym
		\begin{pmatrix}
			A & -A\\
			-\mu I +A & \;   N
		\end{pmatrix}
		&=
		\begin{pmatrix}
			A & - \mu I/2 \\
			- \mu I/2 & \sym N
		\end{pmatrix}\\
		&\geq
		\frac{1}{2}
		\begin{pmatrix}
			A & 0 \\
			0 & \;  \mu I
		\end{pmatrix} + \frac{\mu}{2}
		\begin{pmatrix}
			I & - I \\
			- I & \;  I
		\end{pmatrix},
	\end{align*}
	where in the last step we use the convexity $A\geq \mu I$ and $\sym N\geq \mu I$. For two symmetric matrices $A$ and $M$, $A\geq M$ if $x^{\intercal}(A-M)x\geq 0$ for all $x$. Then~\eqref{eq:SLyf} follows.

	\subsection{Examples}
	We present several examples arising in optimization problems including  strongly convex optimization, composite convex optimization, and saddle point problems with bilinear coupling.
	
	\subsubsection{\bf Strongly Convex Optimization} \label{sec:VOSstrongconvex} 
	Strongly convex optimization problems arise in many practical settings, including machine learning, control systems, and signal processing \cite{becker2011templates,bottou2018optimization}. For the strongly convex optimization problem  
	$$
	\min_{x \in V} f(x) \quad \text{with } f \in \mathcal{S}^{1}_{\mu},
	$$  
	the VOS flow can be expressed as:
	$$
	\begin{aligned}
		x^{\prime} &= y - x, \\
		y^{\prime} &= - \frac{1}{\mu}(\underbrace{\nabla f(x) - \mu \, x}_{\nabla F(x)} + \underbrace{\mu \, y}_{N(y)}).
	\end{aligned}
	$$
	Eliminating $y$ will lead to a Heavy-Ball (HB) momentum flow:
	\begin{equation}\label{eq: HB flow}
		x^{\prime \prime} + 2 x^{\prime} + \frac{1}{\mu} \nabla f(x) = 0.
	\end{equation}
	
	\begin{remark}\rm
		As $x$ and $y$ represent the same variable, we can add more terms involving the difference $y - x$ which will not affect the equilibrium point but may accelerate the flow. For example, a generalized VOS flow is
		\begin{equation}\label{eq:splitflow-ex}
			\begin{aligned}
				x^{\prime} &= y - x, \\
				y^{\prime} &= - \frac{1}{\mu}\left (\nabla f(x) - \mu \, x +  \mu \, y+\theta\nabla^2f(x)(y-x)\right ),
			\end{aligned}
		\end{equation}
		with $\theta\in(0,\mu/L)$. Let $\mathcal G$ be the vector field of \eqref{eq:splitflow-ex} and the Lyapunov function $\mathcal E$ be defined by \eqref{eq:DFmuy}. The following strong Lyapunov property can be proved 
		\begin{equation}\label{eq:SLy2}
			- \inprd{\nabla \mathcal E(x,y),\mathcal G(x,y)} \geq(1-\theta L/\mu)\mathcal E(x,y)+\frac{\theta\mu}{2}\nm{y-x^\star}^2+\frac{\mu(1+\theta)}{2}\nm{y-x}^2.
		\end{equation}
	$\Box$\end{remark}
	
	\subsubsection{\bf Composite Convex Optimization}\label{sec:VOScomposite}  
	Composite convex optimization problems are widely encountered in scenarios where a regularization term is added to improve generalization or enforce certain properties (e.g., sparsity \cite{yin2015minimization,zhang2015survey}). For a composite optimization problem  
	$$
	\min_{x \in V} f(x) + g(x),
	$$  
	where $f$ is $\mu$-strongly convex and $L$-smooth, and $g$ is a convex but may be non-smooth function. Here we write the flow for smooth $g$:  
	$$
	\begin{aligned}
		x^{\prime} &= y - x, \\
		y^{\prime} &= - \frac{1}{\mu}(\underbrace{\nabla f(x) - \mu x}_{\nabla F(x)} + \underbrace{\mu y + \nabla g(y)}_{N(y)}).
	\end{aligned}
	$$  
	This framework allows the incorporation of nonsmooth $g(x)$, which we will address in Section 6.5 with appropriate notation adjustments.
	
	\subsubsection{\bf Saddle Point Systems with Bilinear Coupling}\label{sec:VOSsaddle}  
	Saddle point problems with bilinear coupling frequently appear in game theory, optimization with constraints, and large-scale machine learning, such as adversarial training \cite{creswell2018generative} or Wasserstein distance computation \cite{fu2023high}. Here we consider the saddle point system with bilinear coupling:  
	$$
	\min_{u \in \mathbb{R}^m} \max_{p \in \mathbb{R}^n} \mathcal{H}(u, p) = f(u) - g(p) + (Bu, p),
	$$  
	where $f$ and $g$ are strongly convex functions with constants $\mu_f > 0$ and $\mu_g > 0$, and $B$ is a matrix. Denoted by $B^{\intercal}$ its transpose. 
	
	The splitting framework provides the following decomposition:  
	$$
	\begin{pmatrix}
		\partial_u \mathcal{H}(u, p) \\ 
		-\partial_p \mathcal{H}(u, p)
	\end{pmatrix} 
	= \begin{pmatrix}
		\nabla f & B^{\intercal} \\ 
		-B & \nabla g
	\end{pmatrix}\begin{pmatrix}
		u \\ 
		p
	\end{pmatrix} = \underbrace{\begin{pmatrix}
			\nabla f & 0 \\ 
			0 & \nabla g
		\end{pmatrix}\begin{pmatrix}
			u \\ 
			p
	\end{pmatrix}}_{\nabla F_{+\mu}} + \underbrace{\begin{pmatrix}
			0 & B^{\intercal} \\ 
			-B & 0
	\end{pmatrix}}_{N_{-\mu}}\begin{pmatrix}
		u \\ 
		p
	\end{pmatrix}.
	$$
Let $\bs{x} = (u, p)^{\intercal}, \bs{y} = (v, q)^{\intercal}$, and $\bs{\mu} = {\rm diag}\{\mu_f\, I, \mu_g\, I\}$. Then we have the flow:
	$$
	\begin{aligned}
		\bs{x}^{\prime} &= \bs{y} - \bs{x}, \\
		\bs{y}^{\prime} &= \bs{x} - \bs{y} - {\bs\mu}^{-1}(\nabla F_{+\mu}(\bs{x}) + N_{-\mu}(\bs{y})).
	\end{aligned}
	$$  
	In its component form, with separated constants $\mu_f$ and $\mu_g$, the flow becomes:
	$$
	\begin{aligned}
		u^{\prime} &= v - u, & v^{\prime} &= u - v - \frac{1}{\mu_f}(\nabla f(u) + B^{\intercal}q), \\
		p^{\prime} &= q - p, & q^{\prime} &= p - q - \frac{1}{\mu_g}(\nabla g(p) - Bv).
	\end{aligned}
	$$  
	We shall study discretizations of these flows, which lead to accelerated gradient methods, and provide convergence analyses to ensure accelerated rates in the next section.
	
	\section{Accelerated Gradient Methods}\label{sec:AGD}
	We shall derive accelerated gradient methods based on several discretizations of the Variable and Operator Splitting (VOS) flow \eqref{eq:splitflow}. As mentioned earlier, with the strong Lyapunov property, the linear convergence of the implicit Euler method is straightforward. We will use the implicit Euler method as a reference and propose several discretization techniques to control the discrepancy.
	
	\subsection{Accelerated Over-Relaxation}
	\label{sec:aor-vos}
	We propose an implicit-explicit discretization (IMEX) with an accelerated over-relaxation (AOR) technique:
	\begin{subequations}\label{eq:AOR-HB}
		\begin{align}
			\label{eq:AGDx}      \frac{x_{k+1}-x_k}{\alpha}&= 2y_{k+1} - y_k - x_{k+1},\\
			\label{eq:AGDy}    \frac{y_{k+1}-y_k}{\alpha} &= - \frac{1}{\mu} (\nabla F(x_k) + N(y_{k+1})).
		\end{align}
	\end{subequations}
	The operator $N$ is treated implicitly, and $y$ is updated first by solving \eqref{eq:AGDy}, and then $x$ using \eqref{eq:AGDx}. The AOR technique is applied in \eqref{eq:AGDx} such that $y \approx 2y_{k+1} - y_k$.
	
	The Lyapunov function is given by
	\begin{equation}\label{eq: AGD Lyapunov}
		\mathcal{E}(x, y)= D_{F}(x, x^{\star}) + \frac{\mu}{2}\|y - x^{\star}\|^2.
	\end{equation}
	The strong Lyapunov property \eqref{eq:SLy1} has been established in Theorem \ref{th:SLy1}.
	
	A sharp convergence analysis requires the modified Lyapunov function
	\begin{equation}
		\begin{aligned}
			\mathcal{E}^{\alpha}(x, y):&= \mathcal{E}(x, y) - \alpha \inprd{\nabla F(x) - \nabla F(x^{\star}), y - x^{\star}},
		\end{aligned}
	\end{equation}
	where the cross term $\alpha \inprd{\nabla F(x) - \nabla F(x^{\star}), y - x^{\star}}$ is included. To simplify notation, introduce $\bs z = (x,y)^{\intercal}$ and $\bs z^{\star} =  (x^{\star},x^{\star})^{\intercal}$.    
	
	\begin{lemma}\label{lem:cross Breg} 
		Assume $F$ is convex and $L_F$ smooth. For any two points $(x, y)$, $(\hat x, \hat y)$, and $\mu > 0$, we have the following inequality
		\begin{align*}
			&|\inprd{ \hat y - y ,\nabla F( \hat x) -\nabla F(x) }| 			\leq ~ \sqrt{ \frac{L_F}{\mu}} \left( \min \{D_F(x, \hat x), D_F(\hat x, x)\}+ \frac{\mu}{2}\| y - \hat y\|^2\right).
		\end{align*}
	\end{lemma}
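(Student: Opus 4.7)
The plan is to reduce the mixed inner product to a product of norms, convert the gradient difference into a Bregman divergence via co-coercivity, and then balance the two resulting factors with Young's inequality tuned to the ratio $\mu/L_F$.

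Concretely, the first step is Cauchy--Schwarz: the absolute value is bounded by $\|\hat y - y\|\,\|\nabla F(\hat x)-\nabla F(x)\|_{*}$. The second step uses that $F\in\mathcal S^{1}_{0,L_F}$ and hence, by the co-convexity bound \eqref{eq:philowerL} applied in both orders of the arguments,
\[
\|\nabla F(\hat x)-\nabla F(x)\|_*^2 \;\leq\; 2L_F\, D_F(x,\hat x)
\qquad\text{and}\qquad
\|\nabla F(\hat x)-\nabla F(x)\|_*^2 \;\leq\; 2L_F\, D_F(\hat x,x),
\]
so the same bound holds with $\min\{D_F(x,\hat x),D_F(\hat x,x)\}$ on the right. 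Writing $D:=\min\{D_F(x,\hat x),D_F(\hat x,x)\}$, we therefore obtain
\[
|\inprd{\hat y - y,\nabla F(\hat x)-\nabla F(x)}| \;\leq\; \sqrt{2L_F\,D}\;\|\hat y - y\|.
\]

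The third step is Young's inequality $ab\leq a^2/(2t)+tb^2/2$ with the parameter chosen so that both produced terms match the shape of the target right-hand side. Setting $a=\sqrt{2L_F D}$, $b=\|\hat y-y\|$, and $t=\sqrt{L_F\mu}$, the $a^2/(2t)$ term becomes $\sqrt{L_F/\mu}\,D$ while the $tb^2/2$ term becomes $\frac{1}{2}\sqrt{L_F\mu}\,\|\hat y-y\|^2 = \sqrt{L_F/\mu}\cdot\frac{\mu}{2}\|\hat y-y\|^2$. Adding these two pieces yields exactly $\sqrt{L_F/\mu}\bigl(D+\frac{\mu}{2}\|y-\hat y\|^2\bigr)$, which is the desired estimate.

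There is no real obstacle here; the only subtle point is the choice of the Young parameter $t=\sqrt{L_F\mu}$, which is forced (up to scaling) by the requirement that the weights on $D$ and on $\tfrac{\mu}{2}\|\hat y-y\|^2$ coincide. This is precisely the scaling that will later allow $\alpha=\sqrt{\mu/L_F}$ to be the critical step size in the AOR discretization, so the lemma is written in exactly the form needed to absorb the cross term $\alpha\inprd{\nabla F(x)-\nabla F(x^\star),y-x^\star}$ appearing in $\mathcal E^\alpha$.
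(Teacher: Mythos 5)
Your proof is correct and follows essentially the same route as the paper: Cauchy--Schwarz, the co-coercivity bound \eqref{eq:philowerL} to replace $\|\nabla F(\hat x)-\nabla F(x)\|_*^2$ by $2L_F\min\{D_F(x,\hat x),D_F(\hat x,x)\}$, and Young's inequality with the parameter $\sqrt{L_F\mu}$. The only cosmetic difference is the order of the last two steps (the paper applies Young first to produce the $\|\nabla F(x)-\nabla F(\hat x)\|_*^2$ term and then invokes co-coercivity, while you substitute co-coercivity before balancing with Young), which yields the identical estimate.
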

	\begin{proof}
By Cauchy-Schwarz inequality, Young's inequality, and the co-convexity bound (\ref{eq:philowerL}) for $F\in \mathcal S^{1}_{0, L_F}$, we have
\[
\begin{aligned}
    &\left |\inprd{ \hat{y} - y ,\nabla F( \hat{x}) -\nabla F(x) }\right | \\
    \leq&~ \frac{1}{2\sqrt{\mu L_F}}\| \nabla F(x) - \nabla F(\hat{x})\|_*^2 + \frac{\sqrt{ L_F\mu}}{2}\|y - \hat{y}\|^2\\
    \leq &~ \sqrt{ \frac{L_F}{\mu}} \left( \min \{D_F(x, \hat{x}), D_F(\hat{x}, x)\}+ \frac{\mu}{2}\|y - \hat{y}\|^2\right).
\end{aligned}
\]	$\Box$\end{proof}
	
	We use the linear case to illustrate the convergence proof. Denote  
	$$
	\mathcal D = 
	\begin{pmatrix}
		A & \; 0\\
		0 &\; \mu I
	\end{pmatrix}, \quad \text{and} \quad  
	\mathcal A^{\sym} = 
	\begin{pmatrix}
		0 & \; A \\
		A & \; 0
	\end{pmatrix}.
	$$
	The Lyapunov functions are in the form
	\begin{align*}
		\mathcal E(\bs z) := \frac{1}{2}\|\bs z - \bs z^{\star}\|^2_{ \mathcal  D}, \quad \text{and} \quad \mathcal E^{\alpha} (\bs z) := \frac{1}{2}\|\bs z - \bs z^{\star}\|^2_{\mathcal D - \alpha \mathcal A^{\sym}}.
	\end{align*}  
	
	The AOR is introduced to symmetrize the cross term in  
	$$
	\begin{aligned}
		\inprd{ \nabla \mathcal E(\bs z_{k+1}), \bs z_{k+1} - \bs z_k} =  \alpha \inprd{ \nabla \mathcal E(\bs z_{k+1}) , \mathcal G (\bs z_{k+1})} + \alpha \inprd{\bs z_{k+1} - \bs z^{\star}, \bs z_{k+1} - \bs z_k}_{\mathcal A^{\sym}}.
	\end{aligned}
	$$  
	With this symmetrization, we can use the identity of squares:  
	\begin{equation*}
		\begin{aligned}
			&\alpha \inprd{\bs z_{k+1} - \bs z^{\star}, \bs z_{k+1} - \bs z_k}_{\mathcal A^{\sym}} \\  
			=&\frac{\alpha}{2}\left( \|\bs z_{k+1}  - \bs z^{\star}\|_{ \mathcal A^{\sym}}^2 + \|\bs z_{k+1} - \bs z_k\|_{ \mathcal A^{\sym}}^2 - \|\bs z_k - \bs z^{\star}\|_{\mathcal A^{\sym}}^2\right).
		\end{aligned}
	\end{equation*}  
	These squared terms can be absorbed into $\mathcal E^{\alpha}$, resulting in the identity:  
	\begin{equation}\label{eq:EalphaAbridge}
		\begin{aligned}
			\mathcal E^{\alpha}(\bs z_{k+1}) - \mathcal E^{\alpha}(\bs z_k) = &~ \alpha \inprd{\nabla \mathcal E(\bs z_{k+1}), \mathcal G (\bs z_{k+1})} - \frac{1}{2} \|\bs z_{k+1} - \bs z_k\|_{\mathcal D - \alpha \mathcal A^{\sym}}^2,
		\end{aligned}
	\end{equation}  
	which holds for any $\alpha$. Finally, Lemma \ref{lem:cross Breg} is used to control the step size and strong Lyapunov property is used to obtain the linear convergence.
	
	\begin{theorem}[Convergence of AOR-VOS method]
		\label{thm:convergence rate of AOR-VOS}
		Suppose $F$ is convex and $L_F$-smooth, and $N$ is $\mu$-strongly monotone. Let $(x_k, y_k)$ be the sequence generated by scheme \eqref{eq:AOR-HB} with the initial value $(x_0, y_0)$ and step size $\alpha = \sqrt{\mu/L_F}$. Then, $\mathcal{E}^\alpha(x, y)$ serves as a Lyapunov function, and the method achieves the accelerated linear convergence:
		\begin{equation*}
			\begin{aligned}
				\mathcal{E}^\alpha(x_{k+1}, y_{k+1}) \leq \left(\frac{1}{1+\sqrt{\mu/L_F}}\right)^{k+1} \mathcal{E}^\alpha(x_{0}, y_{0}), \quad k \geq 0,
			\end{aligned}
		\end{equation*}
		and
		\begin{equation*}
			\mathcal{E}(x_{k+1}, y_{k+1}) \leq \sqrt{\frac{L_F}{\mu}} \left(\frac{1}{1+\sqrt{\mu/L_F}}\right)^k \mathcal{E}^\alpha(x_0, y_0), \quad k \geq 0.
		\end{equation*}
	\end{theorem}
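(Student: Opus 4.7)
The plan is to track the difference between the AOR scheme \eqref{eq:AOR-HB} and the implicit Euler method by decomposing the discrepancy into a telescoping piece that upgrades $\mathcal{E}$ to $\mathcal{E}^\alpha$, plus a residual controlled by Lemma \ref{lem:cross Breg}. I would start from the Bregman identity
\[
\mathcal{E}(\bs z_{k+1}) - \mathcal{E}(\bs z_k) = \inprd{\nabla\mathcal{E}(\bs z_{k+1}), \bs z_{k+1} - \bs z_k} - D_{\mathcal{E}}(\bs z_k, \bs z_{k+1}),
\]
and read off from \eqref{eq:AGDx}--\eqref{eq:AGDy} that
\[
\bs z_{k+1} - \bs z_k - \alpha\mathcal{G}(\bs z_{k+1}) = \bigl(\alpha(y_{k+1}-y_k),\; \tfrac{\alpha}{\mu}(\nabla F(x_{k+1}) - \nabla F(x_k))\bigr)^\intercal.
\]
Pairing this discrepancy with $\nabla\mathcal{E}(\bs z_{k+1}) = (\nabla F(x_{k+1}) - \nabla F(x^\star), \mu(y_{k+1}-x^\star))^\intercal$ produces the symmetric sum $\alpha\langle a_{k+1}, b_{k+1}-b_k\rangle + \alpha\langle b_{k+1}, a_{k+1}-a_k\rangle$, where $a_k := \nabla F(x_k) - \nabla F(x^\star)$ and $b_k := y_k - x^\star$.

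The key algebraic step is the bilinear identity
\[
\langle a_{k+1}, b_{k+1} - b_k\rangle + \langle b_{k+1}, a_{k+1} - a_k\rangle = \langle a_{k+1}, b_{k+1}\rangle - \langle a_k, b_k\rangle + \langle a_{k+1} - a_k, b_{k+1} - b_k\rangle,
\]
which exposes the telescoping increment $\Phi_{k+1} - \Phi_k$ with $\Phi_k := \alpha\langle a_k, b_k\rangle$. Transporting this telescoping part to the left turns $\mathcal{E}$ into $\mathcal{E}^\alpha = \mathcal{E} - \Phi$ and yields the nonlinear counterpart of \eqref{eq:EalphaAbridge},
\[
\mathcal{E}^\alpha(\bs z_{k+1}) - \mathcal{E}^\alpha(\bs z_k) = \alpha\inprd{\nabla\mathcal{E}(\bs z_{k+1}), \mathcal{G}(\bs z_{k+1})} - D_{\mathcal{E}}(\bs z_k, \bs z_{k+1}) + \alpha\langle a_{k+1} - a_k, b_{k+1} - b_k\rangle.
\]

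I would then handle the three terms on the right in turn. The first is bounded via the strong Lyapunov property \eqref{eq:SLy1} of Theorem \ref{th:SLy1} by $-\alpha\mathcal{E}(\bs z_{k+1}) - \alpha D_F(x^\star, x_{k+1}) - \alpha\tfrac{\mu}{2}\|y_{k+1}-x^\star\|^2$. Applying Lemma \ref{lem:cross Breg} with $(\hat x, \hat y) = (x_{k+1}, y_{k+1})$ and $(x, y) = (x_k, y_k)$ bounds the AOR residual $\alpha|\langle a_{k+1}-a_k, b_{k+1}-b_k\rangle|$ by $\alpha\sqrt{L_F/\mu}\bigl(D_F(x_k,x_{k+1}) + \tfrac{\mu}{2}\|y_{k+1}-y_k\|^2\bigr)$, which equals $D_{\mathcal{E}}(\bs z_k,\bs z_{k+1})$ at $\alpha = \sqrt{\mu/L_F}$, so this residual cancels the $-D_{\mathcal{E}}$ term exactly. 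A second application with $(\hat x, \hat y) = (x_{k+1}, y_{k+1})$ and $(x, y) = (x^\star, x^\star)$ yields $-\alpha^2\langle a_{k+1}, b_{k+1}\rangle \leq \alpha D_F(x^\star, x_{k+1}) + \alpha\tfrac{\mu}{2}\|y_{k+1}-x^\star\|^2$, which lets me rewrite $-\alpha\mathcal{E}(\bs z_{k+1})$ plus the Lyapunov surplus as the sharper $-\alpha\mathcal{E}^\alpha(\bs z_{k+1})$.

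Combining these estimates gives the one-step contraction $(1+\alpha)\mathcal{E}^\alpha(\bs z_{k+1}) \leq \mathcal{E}^\alpha(\bs z_k)$, and iteration produces the first claim; the same use of Lemma \ref{lem:cross Breg} at $\alpha = \sqrt{\mu/L_F}$ also certifies $\mathcal{E}^\alpha \geq 0$, so it is a bona fide Lyapunov function. For the second claim, the intermediate bound $\alpha\mathcal{E}(\bs z_{k+1}) \leq \mathcal{E}^\alpha(\bs z_k) - \mathcal{E}^\alpha(\bs z_{k+1}) \leq \mathcal{E}^\alpha(\bs z_k)$ gives $\mathcal{E}(\bs z_{k+1}) \leq \sqrt{L_F/\mu}\,\mathcal{E}^\alpha(\bs z_k)$, and the geometric decay of $\mathcal{E}^\alpha$ then supplies the remaining factor. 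The main obstacle is to align the two uses of Lemma \ref{lem:cross Breg} at the single sharp step size $\alpha = \sqrt{\mu/L_F}$: the AOR residual must be bounded through $D_F(x_k, x_{k+1})$ to match $D_{\mathcal{E}}(\bs z_k, \bs z_{k+1})$, while the Lyapunov surplus from Theorem \ref{th:SLy1} is naturally expressed via $D_F(x^\star, x_{k+1})$, and it is precisely the freedom to select either side of the asymmetric Bregman divergence in the $\min$ of Lemma \ref{lem:cross Breg} that makes both bounds tight at the same step size.
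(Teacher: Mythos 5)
Your proposal is correct and follows essentially the same route as the paper: both start from the Bregman identity for $\mathcal E$, recast the AOR scheme as a corrected implicit Euler step, apply the bilinear rearrangement $\langle a_{k+1},b_{k+1}-b_k\rangle + \langle b_{k+1},a_{k+1}-a_k\rangle = \langle a_{k+1},b_{k+1}\rangle - \langle a_k,b_k\rangle + \langle a_{k+1}-a_k,b_{k+1}-b_k\rangle$ to expose the telescoping term that upgrades $\mathcal E$ to $\mathcal E^\alpha$ (this is exactly the paper's identity \eqref{eq:Ealphaidentity}), and then invoke Lemma \ref{lem:cross Breg} twice at $\alpha=\sqrt{\mu/L_F}$ — once to dominate the AOR residual by $D_{\mathcal E}(\bs z_k,\bs z_{k+1})$ and once to absorb the Lyapunov surplus $D_F(x^\star,x_{k+1})+\tfrac{\mu}{2}\|y_{k+1}-x^\star\|^2$ when passing from $-\alpha\mathcal E(\bs z_{k+1})$ to $-\alpha\mathcal E^\alpha(\bs z_{k+1})$. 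One small remark: your closing claim that it is the $\min$ in Lemma \ref{lem:cross Breg} that aligns the two bounds is not really load-bearing here, since both applications end up using the same side, $D_F(x,\hat x)$ with the ``old'' point in the first slot; the $\min$ is just a free consequence of co-coercivity and either side would do.
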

	\begin{proof}
		We rewrite \eqref{eq:AOR-HB} as a corrected implicit Euler discretization of \eqref{eq:splitflow}:
		\begin{equation*}
			\bs z_{k+1} - \bs z_k = \alpha \mathcal{G}(\bs z_{k+1}) + \alpha 
			\begin{pmatrix}
				y_{k+1} - y_k \\
				\frac{1}{\mu} \left( \nabla F(x_{k+1}) - \nabla F(x_k) \right)
			\end{pmatrix}.
		\end{equation*}
		Substituting this expression into the difference equation for $\mathcal{E}$ gives:
		\begin{equation}\label{eq:differenceE}
			\begin{aligned}
				\mathcal{E}(\bs z_{k+1}) - \mathcal{E}(\bs z_k) &= \inprd{\nabla \mathcal{E}(\bs z_{k+1}), \bs z_{k+1} - \bs z_k} - D_{\mathcal{E}} (\bs z_k, \bs z_{k+1})\\
				&= \alpha \inprd{\nabla \mathcal{E}(\bs z_{k+1}), \mathcal{G}(\bs z_{k+1})} - D_{\mathcal{E}} (\bs z_k, \bs z_{k+1})\\
				&\quad + \alpha \inprd{\nabla \mathcal{E}(\bs z_{k+1}), 
					\begin{pmatrix}
						y_{k+1} - y_k \\
						\frac{1}{\mu} \left( \nabla F(x_{k+1}) - \nabla F(x_k) \right)
				\end{pmatrix}}.
			\end{aligned}
		\end{equation}
		
		Expanding the last term in the component form, we obtain:
		\begin{equation*}
			\begin{aligned}
				& \alpha \inprd{\nabla F(x_{k+1}) - \nabla F(x^\star), y_{k+1} - y_k} + \alpha \inprd{y_{k+1} - x^\star, \nabla F(x_{k+1}) - \nabla F(x_k)} \\
				=~& \alpha \inprd{\nabla F(x_{k+1}) - \nabla F(x^\star), y_{k+1} - x^\star} + \alpha \inprd{y_{k+1} - y_k, \nabla F(x_{k+1}) - \nabla F(x_k)} \\
				&- \alpha \inprd{y_k - x^\star, \nabla F(x_k) - \nabla F(x^\star)}.
			\end{aligned}
		\end{equation*}
		Substituting this back into \eqref{eq:differenceE} and rearranging terms, we obtain the identity:
		\begin{equation}\label{eq:Ealphaidentity}
			\begin{aligned}
				\mathcal{E}^\alpha(\bs z_{k+1}) - \mathcal{E}^\alpha(\bs z_k) = &~ \alpha \inprd{\nabla \mathcal{E}(\bs z_{k+1}), \mathcal{G}(\bs z_{k+1})}\\
				&- D_{\mathcal{E}} (\bs z_k, \bs z_{k+1}) + \alpha \inprd{y_{k+1} - y_k, \nabla F(x_{k+1}) - \nabla F(x_k)}.
			\end{aligned}
		\end{equation}
		
		We express the Bregman divergence term as:
		$$
		D_{\mathcal{E}}(\bs z_k, \bs z_{k+1}) = D_F(x_k, x_{k+1}) + \frac{\mu}{2} \|y_k - y_{k+1}\|^2.
		$$
		By Lemma \ref{lem:cross Breg}, the second line of \eqref{eq:Ealphaidentity} is negative for $\alpha = \sqrt{\mu/L_F}$ which can be then dropped in the following inequality. Applying the strong Lyapunov property \eqref{eq:SLy1}, we have:
		\begin{equation}\label{eq: decay of E quadratic appendix}
			\begin{aligned}
				\mathcal{E}^\alpha(\bs z_{k+1}) - \mathcal{E}^\alpha(\bs z_k) &\leq -\alpha \left(\mathcal{E}(\bs z_{k+1}) + D_F(x^\star, x_{k+1}) + \frac{\mu}{2} \|y_{k+1} - x^\star\|^2\right)\\
				&\leq -\alpha \, \mathcal{E}^\alpha(\bs z_{k+1}).
			\end{aligned}
		\end{equation}
		The extra positive terms $D_F(x^\star, x_{k+1}) + \frac{\mu}{2} \|y_{k+1} - x^\star\|^2$ from the strong Lyapunov property \eqref{eq:SLy1} bounds the cross terms (by Lemma \ref{lem:cross Breg}) and facilitates switching from $\mathcal{E}(\bs z_{k+1})$ to $\mathcal{E}^\alpha(\bs z_{k+1})$. The step size $\alpha = \sqrt{\mu/L_F}$ ensures $\mathcal E^{\alpha}(\bs z)\geq 0$.
		
		Hence, the global linear convergence of $\mathcal{E}^\alpha$ follows:
		$$
		\mathcal{E}^\alpha(\bs z_{k+1}) \leq \frac{1}{1 + \alpha} \mathcal{E}^\alpha(\bs z_k) \leq \left(\frac{1}{1+\alpha}\right)^{k+1} \mathcal{E}^\alpha(\bs z_0), \quad k \geq 0.
		$$
		Moreover, \eqref{eq: decay of E quadratic appendix} implies:
		$$
		\mathcal{E}(\bs z_{k+1}) \leq \frac{1}{\alpha} \left(\mathcal{E}^\alpha(\bs z_k) - \mathcal{E}^\alpha(\bs z_{k+1})\right) \leq \frac{1}{\alpha} \mathcal{E}^\alpha(\bs z_k) \leq \frac{1}{\alpha} \left(\frac{1}{1+\alpha}\right)^k \mathcal{E}^\alpha(\bs z_0).
		$$
	$\Box$\end{proof}
	
	\begin{remark}\rm 
		Another discretization of \eqref{eq:splitflow} givens the AOR-HB method introduced recently in~\cite{wei2024accelerated}:
		\begin{subequations}\label{AOR-HB}
			\begin{align}
				\label{eq:AGD1}      \frac{x_{k+1}-x_k}{\alpha}&= y_k - x_{k+1} ,\\
				\label{eq:AGD2}    \frac{y_{k+1}-y_k}{\alpha} &= - \frac{1}{\mu}(2\nabla F(x_{k+1}) -\nabla F(x_k) + N(y_{k+1})).
			\end{align}
		\end{subequations}
		The scheme is symmetric to \eqref{eq:AOR-HB}, and the convergence analysis is identical up to a sign change in $\mathcal E^{\alpha}(x, y) = \mathcal{E}(x, y)+ \alpha \inprd{\nabla F(x) - \nabla F(x^{\star}), y - x^{\star}}$. 
	$\Box$\end{remark}

	\subsection{Extrapolation by Predictor-Corrector Methods}
	We start from the IMEX scheme, where $x_{k+1}$ is computed first and then used to update $y_{k+1}$:
	\begin{equation}\label{eq:EPC}
		\left\{
		\begin{aligned}
			\frac{x_{k+1} - x_k}{\alpha} &= y_k - x_{k+1}, \\
			\frac{y_{k+1} - y_k}{\alpha} &= - \frac{1}{\mu}\left( \nabla F(x_{k+1}) + N(y_{k+1}) \right).
		\end{aligned}
		\right.
	\end{equation}
	
	\begin{lemma}\label{lem:NAG-GS}  
		For the scheme~\eqref{eq:EPC} and the Lyapunov function \eqref{eq: AGD Lyapunov}, the following inequality holds:  
		$$  
		(1+\alpha)\mathcal{E}(\bs z_{k+1})  
		\leqslant \mathcal{E}(\bs z_k)  
		- \frac{\mu}{2}\|y_{k+1} - y_k\|^2  
		- \alpha \langle \nabla F(x_{k+1}) - \nabla F(x^{\star}), y_{k+1} - y_k \rangle.  
		$$  
	\end{lemma}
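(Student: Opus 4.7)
The plan is to treat \eqref{eq:EPC} as an implicit Euler step for the VOS vector field $\mathcal{G}$ modified by a controllable perturbation that comes from the predictor-corrector structure. Observe that since $\alpha(y_k - x_{k+1}) = \alpha(y_{k+1} - x_{k+1}) + \alpha(y_k - y_{k+1})$, the scheme \eqref{eq:EPC} can be rewritten as
\begin{equation*}
\bs z_{k+1} - \bs z_k = \alpha\, \mathcal{G}(\bs z_{k+1}) + \alpha \begin{pmatrix} y_k - y_{k+1} \\ 0 \end{pmatrix}.
\end{equation*}
This exhibits the $x$-equation as a genuine implicit Euler step on $\mathcal{G}$ plus a perturbation supported in the $x$-component only.

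Next, I would expand $\mathcal{E}(\bs z_{k+1}) - \mathcal{E}(\bs z_k)$ by the Bregman three-point identity applied to $\mathcal{E}$ (which is convex):
\begin{equation*}
\mathcal{E}(\bs z_{k+1}) - \mathcal{E}(\bs z_k) = \langle \nabla \mathcal{E}(\bs z_{k+1}), \bs z_{k+1} - \bs z_k \rangle - D_{\mathcal{E}}(\bs z_k, \bs z_{k+1}).
\end{equation*}
Since $\mathcal{E}$ separates into $D_F(\cdot,x^\star) + \frac{\mu}{2}\|\cdot - x^\star\|^2$, the Bregman divergence splits as $D_{\mathcal{E}}(\bs z_k,\bs z_{k+1}) = D_F(x_k,x_{k+1}) + \frac{\mu}{2}\|y_k - y_{k+1}\|^2$, and dropping the nonnegative $D_F(x_k,x_{k+1})$ yields the $-\frac{\mu}{2}\|y_{k+1}-y_k\|^2$ term that appears in the conclusion.

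I would then substitute the corrected implicit Euler expression into the inner product. Using $\nabla\mathcal{E}(\bs z_{k+1}) = (\nabla F(x_{k+1}) - \nabla F(x^\star),\, \mu(y_{k+1}-x^\star))^{\intercal}$, the perturbation contributes exactly the cross term $\alpha\langle \nabla F(x_{k+1}) - \nabla F(x^\star),\, y_k - y_{k+1}\rangle$, while the $\alpha\langle \nabla\mathcal{E}(\bs z_{k+1}),\mathcal{G}(\bs z_{k+1})\rangle$ piece is handled by the strong Lyapunov property \eqref{eq:SLy1} from Theorem \ref{th:SLy1}, which gives $\alpha\langle \nabla\mathcal{E}(\bs z_{k+1}),\mathcal{G}(\bs z_{k+1})\rangle \leq -\alpha \mathcal{E}(\bs z_{k+1})$ after discarding the extra nonnegative terms $D_F(x^\star,x_{k+1}) + \frac{\mu}{2}\|y_{k+1}-x^\star\|^2$.

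Putting these pieces together and moving the $-\alpha\mathcal{E}(\bs z_{k+1})$ to the left-hand side produces $(1+\alpha)\mathcal{E}(\bs z_{k+1})$ on the left and the desired bound on the right, with the sign flip $y_k - y_{k+1} \to -(y_{k+1}-y_k)$ accounting for the sign in front of the cross term in the statement. The only real subtlety is bookkeeping: recognizing that the $x$-update is a predictor step that, when rewritten in terms of $y_{k+1}$, produces exactly one perturbation term which couples through $\nabla F$, and then being careful that the implicit Euler framework combined with convexity of $\mathcal{E}$ leaves us with enough slack (the dropped $D_F(x_k,x_{k+1})$ and $D_F(x^\star,x_{k+1}) + \frac{\mu}{2}\|y_{k+1}-x^\star\|^2$) to reach the inequality without any step-size restriction.
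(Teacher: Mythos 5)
Your proposal is correct and follows essentially the same route as the paper: rewrite \eqref{eq:EPC} as an implicit Euler step on $\mathcal G$ plus a perturbation $\mp\alpha(y_{k+1}-y_k,0)^{\intercal}$ in the $x$-slot, expand $\mathcal E(\bs z_{k+1})-\mathcal E(\bs z_k)$ via the Bregman identity, drop the nonnegative $D_F(x_k,x_{k+1})$, and apply the strong Lyapunov property \eqref{eq:SLy1} to the $\alpha\inprd{\nabla\mathcal E(\bs z_{k+1}),\mathcal G(\bs z_{k+1})}$ term. The sign bookkeeping and the identification of the dropped slack terms are all handled correctly.
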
  
	\begin{proof}  
Again we begin by expressing the iteration as a correction to the implicit Euler method:  
		\begin{equation*}
			\bs z_{k+1} - \bs z_k = \alpha \mathcal{G}(\bs z_{k+1}) - \alpha  
			\begin{pmatrix}  
				y_{k+1} - y_k \\  
				0  
			\end{pmatrix}.  
		\end{equation*}  
		Substituting this into the difference equation for \( \mathcal{E} \), we have:  
		\begin{align*}  
			\mathcal{E}(\bs z_{k+1}) - \mathcal{E}(\bs z_k) &= \langle \nabla \mathcal{E}(\bs z_{k+1}), \bs z_{k+1} - \bs z_k \rangle - D_{\mathcal{E}}(\bs z_k, \bs z_{k+1}) \\  
			&= \alpha \langle \nabla \mathcal{E}(\bs z_{k+1}), \mathcal{G}(\bs z_{k+1}) \rangle - D_{\mathcal{E}}(\bs z_k, \bs z_{k+1}) \\
			&\quad - \alpha \langle \nabla F(x_{k+1}) - \nabla F(x^{\star}), y_{k+1} - y_k \rangle.  
		\end{align*}  
		
		Using the strong Lyapunov property and part of \( D_{\mathcal{E}}(\bs z_k, \bs z_{k+1})\), i.e. \(\frac{\mu}{2}\|y_{k+1} - y_k\|^2 \), we obtain:  
		\[
		\mathcal{E}(\bs z_{k+1}) - \mathcal{E}(\bs z_k)  
		\leqslant -\alpha \mathcal{E}(\bs z_{k+1})  
		- \frac{\mu}{2}\|y_{k+1} - y_k\|^2  
		- \alpha \langle \nabla F(x_{k+1})- \nabla F(x^{\star}), y_{k+1} - y_k \rangle.  
		\]  
		Rearranging terms gives the desired result.  
	$\Box$\end{proof}  
	
	The cross term \( \alpha \langle  \nabla F(x_{k+1})- \nabla F(x^{\star}), y_{k+1} - y_k \rangle \) is challenging to control. We consider the predictor-corrector method for ODE solvers~\cite[Section 6.2.3]{Gautschi:2011Numerical}:  
	\begin{equation}\label{eq:NAG-GS-1st-extra}  
		\left\{  
		\begin{aligned}  
			\frac{\tilde{x}_{k+1} - x_k}{\alpha} &= y_k - \tilde{x}_{k+1}, \\  
			\frac{y_{k+1} - y_k}{\alpha} &= - \frac{1}{\mu}\left(\nabla F(\tilde{x}_{k+1}) + N(y_{k+1})\right), \\  
			\frac{x_{k+1} - x_k}{\alpha} &= y_{k+1} - x_{k+1}.  
		\end{aligned}  
		\right.  
	\end{equation}  
	Here, \( \tilde{x}_{k+1} \) represents the predictor produced by an explicit scheme for $x' = y-x$, and \( x_{k+1} \) is the corrector obtained using an implicit scheme. Therefore $(x_{k+1}, y_{k+1})$ is more faithful to the VOS flow. 
	
	Subtracting the first equation from the third equation of~\eqref{eq:NAG-GS-1st-extra}, we derive the relationship:  
	\begin{equation}\label{eq:xyrelation}
		x_{k+1} - \tilde{x}_{k+1} = \frac{\alpha}{1 + \alpha}(y_{k+1} - y_k),
	\end{equation}
	which is illustrated in Fig. \ref{fig:EPC}. Namely $x_{k+1}$ is an extrapolation of $(x_k, y_k)$ through predictor-corrector methods and thus \eqref{eq:NAG-GS-1st-extra} will be named EPC-VOS scheme. 
	\begin{figure}[htbp]
		\begin{center}
			\includegraphics[width=5.5cm]{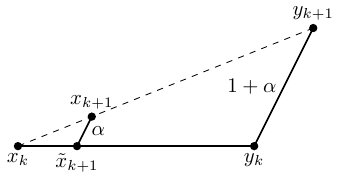}
			\caption{Extrapolation by the predictor-corrector methods. Triangle $\Delta(x_k, \tilde x_{k+1}, x_{k+1})$  and  triangle $\Delta(x_k, y_k, y_{k+1})$ are similar.}
			\label{fig:EPC}
		\end{center}
	\end{figure}
	
	\begin{theorem}
		\label{thm:conv-ex0-ode-NAG}
		Suppose $F$ is convex and $L_F$-smooth, and $N$ is $\mu$-strongly monotone. Then for the predictor-corrector 
		scheme~\eqref{eq:NAG-GS-1st-extra} with $\alpha = \sqrt{\mu/L_F}$, we have
		\begin{equation*}
			\mathcal E(x_{k+1}, y_{k+1})
			\leqslant 
			\frac{	1}{1+\sqrt{\mu/L_F}}\mathcal E(x_{k}, y_{k}),		\quad k\geq 0,
		\end{equation*}
		where $\mathcal E$ is defined by~\eqref{eq: AGD Lyapunov}.
	\end{theorem}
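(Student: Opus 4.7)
The plan is to reduce the analysis of EPC-VOS to the already-proved Lemma~\ref{lem:NAG-GS}. The crucial observation is that the pair $(\tilde x_{k+1}, y_{k+1})$ produced by the first two lines of \eqref{eq:NAG-GS-1st-extra} is exactly the output of scheme \eqref{eq:EPC}, with $\tilde x_{k+1}$ playing the role of the state variable there. Hence Lemma~\ref{lem:NAG-GS} applies verbatim to $\tilde{\bs z}_{k+1}:=(\tilde x_{k+1}, y_{k+1})$, giving
\[
(1+\alpha)\mathcal E(\tilde{\bs z}_{k+1}) \leq \mathcal E(\bs z_k) - \frac{\mu}{2}\|y_{k+1}-y_k\|^2 - \alpha\langle \nabla F(\tilde x_{k+1})-\nabla F(x^\star), y_{k+1}-y_k\rangle.
\]
The remaining work is to upgrade $\mathcal E(\tilde{\bs z}_{k+1})$ to $\mathcal E(\bs z_{k+1})$ and to dispose of the troublesome cross term.

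The key insight is that the upgrade produces exactly the cross term needed to cancel the one from the lemma. Since $\tilde{\bs z}_{k+1}$ and $\bs z_{k+1}$ share the same $y$-component, a direct Bregman expansion gives
\[
\mathcal E(\bs z_{k+1}) = \mathcal E(\tilde{\bs z}_{k+1}) + D_F(x_{k+1},\tilde x_{k+1}) + \langle \nabla F(\tilde x_{k+1})-\nabla F(x^\star), x_{k+1}-\tilde x_{k+1}\rangle.
\]
Invoking the extrapolation relation \eqref{eq:xyrelation}, namely $x_{k+1}-\tilde x_{k+1}=\frac{\alpha}{1+\alpha}(y_{k+1}-y_k)$, and multiplying through by $(1+\alpha)$, the last cross term becomes precisely $\alpha\langle \nabla F(\tilde x_{k+1})-\nabla F(x^\star), y_{k+1}-y_k\rangle$, which cancels the cross term inherited from Lemma~\ref{lem:NAG-GS}. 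What remains is
\[
(1+\alpha)\mathcal E(\bs z_{k+1}) \leq \mathcal E(\bs z_k) - \frac{\mu}{2}\|y_{k+1}-y_k\|^2 + (1+\alpha)\,D_F(x_{k+1},\tilde x_{k+1}).
\]

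To close the estimate, apply the $L_F$-smoothness upper bound \eqref{eq:DL}, $D_F(x_{k+1},\tilde x_{k+1}) \leq \frac{L_F}{2}\|x_{k+1}-\tilde x_{k+1}\|^2$, and use \eqref{eq:xyrelation} again to arrive at $(1+\alpha)D_F(x_{k+1},\tilde x_{k+1}) \leq \frac{L_F\alpha^2}{2(1+\alpha)}\|y_{k+1}-y_k\|^2$. With the choice $\alpha=\sqrt{\mu/L_F}$ one has $\frac{L_F\alpha^2}{1+\alpha}=\frac{\mu}{1+\alpha}\leq \mu$, so the coefficient of $\|y_{k+1}-y_k\|^2$ is non-positive and can be dropped, yielding $(1+\alpha)\mathcal E(\bs z_{k+1}) \leq \mathcal E(\bs z_k)$, which is the desired contraction.

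The main obstacle is spotting this two-stage analysis. A naive approach that views \eqref{eq:NAG-GS-1st-extra} as a single perturbation of the implicit Euler scheme for the VOS flow produces a cross term of the form $\alpha\langle y_{k+1}-x^\star, \nabla F(x_{k+1})-\nabla F(\tilde x_{k+1})\rangle$; bounding it via Young's inequality (or Lemma~\ref{lem:cross Breg}) loses a factor of roughly $1/\alpha$ in the regime $\alpha=\sqrt{\mu/L_F}$ and fails to close. The cancellation trick works precisely because the $y$-update uses the predictor $\nabla F(\tilde x_{k+1})$, so the same operator value appears both in the lemma's cross term and in the Bregman upgrade, permitting exact cancellation and leaving only the easily-controlled quadratic remainder $D_F(x_{k+1},\tilde x_{k+1})$.
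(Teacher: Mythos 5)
Your proposal is correct and takes essentially the same route as the paper's proof: both apply Lemma~\ref{lem:NAG-GS} to $\tilde{\bs z}_{k+1}=(\tilde x_{k+1},y_{k+1})$, expand $\mathcal E(\bs z_{k+1})-\mathcal E(\tilde{\bs z}_{k+1})$ via a Bregman decomposition together with the extrapolation relation $x_{k+1}-\tilde x_{k+1}=\frac{\alpha}{1+\alpha}(y_{k+1}-y_k)$, obtain exact cancellation of the cross term, and then dispose of the remaining quadratic via $L_F$-smoothness and the step-size choice $\alpha=\sqrt{\mu/L_F}$. The only cosmetic difference is that you keep the exact three-point identity before invoking the Lipschitz bound, whereas the paper absorbs the bound directly into the displayed inequality.
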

	\begin{proof}
		Let $\tilde{\bs z}_{k+1} = (\tilde x_{k+1}, y_{k+1})$. By Lemma \ref{lem:NAG-GS}, we have
		\[
		\mathcal E(\tilde{\bs z}_{k+1})
		\leqslant 	
		\frac{\mathcal E(\bs z_{k})}{1+\alpha}
		-	\frac{\mu}{2(1+\alpha)}\nm{y_{k+1}-y_k}^2
		-\frac{\alpha}{1+\alpha}\dual{\nabla F(\tilde{x}_{k+1}) - \nabla F(x^{\star}), y_{k+1} - y_k}.
		\]
		
Then we consider the difference $\mathcal E(\bs z_{k+1}) -\mathcal E(\tilde{\bs z}_{k+1})$. We use the upper bound of $D_F(x_{k+1},\tilde{x}_{k+1})\leq L_F\nm{x_{k+1}-\tilde{x}_{k+1}}^2/2$ and the relation \eqref{eq:xyrelation} to get
		\[
		\begin{split}
			{}&\mathcal E(\bs z_{k+1}) -\mathcal E(\tilde{\bs z}_{k+1})  = D_{F}(x_{k+1}, x^{\star}) - D_{F}(\tilde x_{k+1}, x^{\star})\\
			\leqslant {}&\dual{\nabla F(\tilde{x}_{k+1}) - \nabla F(x^{\star}), x_{k+1} - \tilde{x}_{k+1}}+\frac{L_F}{2}\nm{x_{k+1}-\tilde{x}_{k+1}}^2\\
			={}&\frac{\alpha}{1+\alpha}\dual{\nabla F(\tilde{x}_{k+1}) - \nabla F(x^{\star}), y_{k+1} - y_k}
			+\frac{L_F\alpha^2}{2(1+\alpha)^2}\nm{y_{k+1} - y_k}^2.
		\end{split}
		\]
		Adding those two inequalities to cancel the cross term, we obtain
		\begin{equation*}\label{eq:diff-1}
			\mathcal E(\bs z_{k+1}) \leqslant \frac{1}{1+\alpha}\mathcal E(\bs z_{k})
			+\left(
			\frac{L_F\alpha^2}{2(1+\alpha)^2}-\frac{\mu}{2(1+\alpha)}
			\right)\nm{y_{k+1}-y_k}^2.
		\end{equation*}
		The second term can be dropped if the step size satisfies $L_F\alpha^2
		\leq \mu(1+\alpha)$ which is true for $\alpha = \sqrt{\mu/L_F}$.
	$\Box$\end{proof}
	
	Compared with AOR-VOS, the EPC-VOS can be also applied to strong Lyapunov functions \eqref{eq:Lk-HB}, since we do not use the extra positive terms in the strong Lyapunov property. 
	
	\subsection{One extra gradient step}
	When consider the strongly convex optimization problem $\min_x f(x)$, there is another fix of the cross term. The discretization is 
	\begin{equation}\label{eq:NAG-GS}
		\left\{
		\begin{aligned}
			\frac{x_{k+1} - x_k}{\alpha} &= y_k - x_{k+1}, \\
			\frac{y_{k+1} - y_k}{\alpha} &= x_{k+1} - y_{k+1} -  \frac{1}{\mu} \nabla f(x_{k+1}).
		\end{aligned}
		\right.
	\end{equation}
	
	To analyze the convergence properties, we use the Lyapunov function:
	\begin{equation}\label{eq:Lk-HB}
		\mathcal{E}(\bs z_k) = f(x_k) - f(x^\star) + \frac{\mu}{2}\|y_k - x^\star\|^2,
	\end{equation}
	where \( \bs z_k = (x_k, y_k) \). 
	The strong Lyapunov property of \( \mathcal{E}(\bs z_k) \) follows as a direct consequence of Theorem \ref{th:SLy2}.
	
	We now incorporate one gradient descent step:  
	\begin{equation}\label{eq:hbGS}  
		\left\{  
		\begin{aligned}  
			\frac{\tilde{x}_{k+1} - x_k}{\alpha} &= y_k - \tilde{x}_{k+1}, \\  
			\frac{y_{k+1} - y_k}{\alpha} &= \tilde{x}_{k+1} - y_{k+1} - \frac{1}{\mu} \nabla f(\tilde{x}_{k+1}), \\  
			x_{k+1} &= \tilde{x}_{k+1} - \frac{1}{L} \nabla f(\tilde{x}_{k+1}).  
		\end{aligned}  
		\right.  
	\end{equation}  
	As $\nabla f(\tilde{x}_{k+1})$ has been evaluated in the second step, the computational cost of the third step is neglectable. 
	
	\begin{theorem}\label{thm:hbGSGD}  
		Assume \( f \in \mathcal{S}_{\mu,L}^1 \) with \( 0 < \mu \leqslant L < \infty \). Then, for the scheme~\eqref{eq:hbGS} with \( \alpha = \sqrt{\mu / L} \),  
		\begin{equation}\label{eq:decay-Lk-ex0}  
			\mathcal{E}(x_{k+1}, y_{k+1}) \leqslant \frac{1}{1 + \sqrt{\mu / L}} \mathcal{E}(x_k, y_k), \quad k \geq 0,  
		\end{equation}  
		where \( \mathcal{E} \) is defined by~\eqref{eq:Lk-HB}.  
	\end{theorem}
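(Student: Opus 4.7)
The plan is to follow the EPC-VOS template: first prove an intermediate decay bound for the pair $\tilde{\bs z}_{k+1} := (\tilde x_{k+1}, y_{k+1})$ produced by the first two lines of \eqref{eq:hbGS}, and then exploit the corrector $x_{k+1} = \tilde x_{k+1} - L^{-1}\nabla f(\tilde x_{k+1})$ to absorb the residual $\|\nabla f(\tilde x_{k+1})\|_*^2$ that the intermediate bound leaves behind. The underlying flow is the VOS flow from Section~\ref{sec:VOSstrongconvex}, i.e.\ $\mathcal G(x,y) = (y-x,\ x-y-\mu^{-1}\nabla f(x))^\intercal$, and the strong Lyapunov estimate \eqref{eq:SLyf} from Theorem~\ref{th:SLy2} is the key ingredient. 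Write $\bs z_k = (x_k, y_k)$ throughout.

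First, I would recast the first two lines of \eqref{eq:hbGS} as a perturbed implicit Euler step,
$$
\tilde{\bs z}_{k+1} - \bs z_k \;=\; \alpha\,\mathcal G(\tilde{\bs z}_{k+1}) + \alpha\begin{pmatrix} y_k - y_{k+1} \\ 0 \end{pmatrix},
$$
the perturbation arising because the first equation uses $y_k$ in place of $y_{k+1}$. Combining convexity of $\mathcal E$, the strong Lyapunov bound \eqref{eq:SLyf} at $\tilde{\bs z}_{k+1}$, and the identity $D_{\mathcal E}(\bs z_k,\tilde{\bs z}_{k+1}) = D_f(x_k,\tilde x_{k+1}) + \tfrac{\mu}{2}\|y_{k+1}-y_k\|^2$, and then discarding the nonnegative terms $D_f(x_k,\tilde x_{k+1})$ and $\tfrac{\alpha\mu}{2}\|y_{k+1}-\tilde x_{k+1}\|^2$, I would reach the intermediate inequality
$$
(1+\alpha)\,\mathcal E(\tilde{\bs z}_{k+1}) \;\leq\; \mathcal E(\bs z_k) - \frac{\mu}{2}\|y_{k+1}-y_k\|^2 + \alpha\,\langle \nabla f(\tilde x_{k+1}),\, y_k - y_{k+1}\rangle.
$$

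The leftover cross term is the main obstacle; unlike in the EPC-VOS analysis, it cannot be killed by the geometric relation between predictor and corrector, because here the ``corrector'' only moves $\tilde x_{k+1}$ along the gradient direction. I would dispose of it by the sharp Young's inequality
$$
\alpha\,\langle \nabla f(\tilde x_{k+1}),\, y_k - y_{k+1}\rangle \;\leq\; \frac{\alpha^2}{2\mu}\|\nabla f(\tilde x_{k+1})\|_*^2 + \frac{\mu}{2}\|y_k-y_{k+1}\|^2,
$$
which exactly cancels the $\tfrac{\mu}{2}\|y_{k+1}-y_k\|^2$ term and leaves a residual $\tfrac{\alpha^2}{2\mu}\|\nabla f(\tilde x_{k+1})\|_*^2$. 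With $\alpha = \sqrt{\mu/L}$ this residual equals $\tfrac{1}{2L}\|\nabla f(\tilde x_{k+1})\|_*^2$, which is precisely what the $L$-smooth descent lemma, used exactly as in \eqref{eq:fdecay}, removes via the third step of \eqref{eq:hbGS}:
$$
\mathcal E(\bs z_{k+1}) - \mathcal E(\tilde{\bs z}_{k+1}) = f(x_{k+1}) - f(\tilde x_{k+1}) \;\leq\; -\frac{1}{2L}\|\nabla f(\tilde x_{k+1})\|_*^2.
$$

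To finish, multiply the displayed descent inequality by $(1+\alpha)$ and add it to the intermediate bound; the $\|\nabla f(\tilde x_{k+1})\|_*^2$ contributions collapse into $-\tfrac{\alpha}{2L}\|\nabla f(\tilde x_{k+1})\|_*^2 \leq 0$, which I would simply drop to obtain $(1+\alpha)\mathcal E(\bs z_{k+1}) \leq \mathcal E(\bs z_k)$, that is, \eqref{eq:decay-Lk-ex0}. The delicate point is the matching of coefficients between the Young's splitting ($\alpha^2/(2\mu)$) and the descent lemma ($1/(2L)$); this matching is exactly what pins down the step size to $\alpha = \sqrt{\mu/L}$ and explains why the extra gradient step costs essentially nothing yet restores the accelerated rate.
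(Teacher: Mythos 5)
Your proposal is correct and matches the paper's own argument step for step: rewrite the first two updates as an implicit Euler step perturbed by $\alpha(y_k-y_{k+1},0)^\intercal$, invoke the strong Lyapunov bound \eqref{eq:SLyf} at $\tilde{\bs z}_{k+1}$, absorb the cross term and $\tfrac{\mu}{2}\|y_{k+1}-y_k\|^2$ via Young's inequality to arrive at $(1+\alpha)\mathcal E(\tilde{\bs z}_{k+1})\leq\mathcal E(\bs z_k)+\tfrac{\alpha^2}{2\mu}\|\nabla f(\tilde x_{k+1})\|_*^2$, and then cancel the residual with the descent estimate $f(x_{k+1})-f(\tilde x_{k+1})\leq-\tfrac{1}{2L}\|\nabla f(\tilde x_{k+1})\|_*^2$ from the corrector step. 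The coefficient bookkeeping $\tfrac{\alpha^2}{2\mu}-\tfrac{1+\alpha}{2L}\leq 0$ at $\alpha=\sqrt{\mu/L}$ is exactly the paper's closing calculation, so the proposal is a faithful reconstruction, only more explicit in re-deriving the intermediate inequality that the paper dispatches with "as before."
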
  
	\begin{proof}  
		As before, we have shown that  
		\[
		(1 + \alpha)\mathcal{E}(\tilde{x}_{k+1}, y_{k+1}) \leqslant \mathcal{E}(x_k, y_k) - \frac{\mu}{2}\|y_{k+1} - y_k\|^2 - \alpha \langle \nabla f(\tilde{x}_{k+1}), y_{k+1} - y_k \rangle.
		\]  
		Applying the Cauchy-Schwarz and Young inequalities to cancel \(\frac{\mu}{2}\|y_{k+1} - y_k\|^2\), we get:  
		\begin{equation}\label{eq:conv-explicitEuler-agf-s}  
			(1 + \alpha)\mathcal{E}(\tilde{x}_{k+1}, y_{k+1}) \leqslant \mathcal{E}(x_k, y_k) + \frac{\alpha^2}{2\mu} \|\nabla f(\tilde{x}_{k+1})\|_*^2.  
		\end{equation}  
		
		Due to the extra gradient descent step, we have the decay property \eqref{eq:fdecay}:  
		\begin{equation}\label{eq:GDdecay}  
			\mathcal{E}(x_{k+1}, y_{k+1}) - \mathcal{E}(\tilde{x}_{k+1}, y_{k+1}) = f(x_{k+1}) - f(\tilde{x}_{k+1}) \leqslant -\frac{1}{2L}\|\nabla f(\tilde{x}_{k+1})\|_*^2.  
		\end{equation}  
		Multiplying \eqref{eq:GDdecay} by  \( 1 + \alpha \) and adding it to \eqref{eq:conv-explicitEuler-agf-s}, we obtain:  
		\[
		(1 + \alpha)\mathcal{E}(x_{k+1}, y_{k+1}) \leqslant \mathcal{E}(x_k, y_k) + \left(\frac{\alpha^2}{2\mu} - \frac{1 + \alpha}{2L}\right)\|\nabla f(\tilde{x}_{k+1})\|_*^2 \leqslant 0,  
		\]  
		as \( L\alpha^2 \leqslant \mu(1 + \alpha) \). This implies \eqref{eq:decay-Lk-ex0}.  
	$\Box$\end{proof}  
	
	\begin{remark}\rm  
		The final gradient descent step can be replaced by any iteration satisfying the sufficient decay property \eqref{eq:GDdecay}. For example, if we let \( p_{k+1} = - \nabla f(\tilde{x}_{k+1}) \), the last step in \eqref{eq:hbGS} can be replaced with one dimensional problem:  
		\begin{equation}  
			x_{k+1} = \arg\min_{t > 0} f(\tilde{x}_{k+1} + tp_{k+1}).  
		\end{equation}
		If $f(x_k) \leq f(x_{k+1})$, we can retain $x_k$ by resetting $x_{k+1} = x_k$. This ensures the monotonic decrease of $f(x_k)$ while preserving the accelerated linear convergence rate. Note that for the scheme \eqref{eq:hbGS}, we have $f(x_{k+1}) \leq f(\tilde{x}_{k+1})$, but $f(\tilde{x}_{k+1})$ may not be less than $f(x_k)$.  
	$\Box$\end{remark}
	
	\subsection{Examples}\label{sec:examples}
	We consider the application of the previous schemes and convergence analysis to two well-known accelerated gradient methods: the Heavy-Ball (HB) momentum method and accelerated proximal gradient (APG) methods. Our framework enables the derivation and convergence analysis of these accelerated schemes in a straightforward manner.
	
	\subsubsection{\bf Heavy-Ball methods}
	Consider the strongly convex case $f\in \mathcal S^{1}_{\mu, L}$ with $\mu>0$. In 1960s, Polyak~\cite{polyak_methods_1964} introduced the HB model
	\begin{equation*}\label{eq:hb}
		\begin{aligned}
			x''+\eta \, x'+ \theta \nabla f(x) = 0,
		\end{aligned}
	\end{equation*}
	and its discretization
	\begin{equation}\label{eq:hbm}
		x_{k+1} = x_k - \gamma \nabla f(x_k) + \beta (x_k - x_{k-1}).
	\end{equation}
	Using the asymptotic bound between the matrix norm and the spectral radius, Polyak~\cite[Theorem 9]{polyak_methods_1964} established local convergence of \eqref{eq:hbm} (under the stronger smoothness condition $f\in\mathcal{C}^2$) via spectral analysis, obtaining the minimal spectral radius:
	\[
	\rho^* = \frac{\sqrt{L}-\sqrt{\mu}}{\sqrt{L}+\sqrt{\mu}}, \quad \text{ with } \beta^* = \left(\frac{\sqrt{L}-\sqrt{\mu}}{\sqrt{L}+\sqrt{\mu}}\right)^2, \ \gamma^* = \frac{4}{(\sqrt{L}+\sqrt{\mu})^2}.
	\]
	
	However, Lessard, Recht, and Packard \cite{lessard_analysis_2016} demonstrated that HB with parameters optimized for linear ODEs does not guarantee global convergence for general nonlinear objectives, highlighting the limitations of spectral analysis. They presented a counterexample where Polyak's parameter choice fails for strongly convex problems. Subsequent works \cite{GhadimFeyzmaJohans2015Global,sun2019non,saab2022adaptive,shi2022understanding} modified $(\beta, \gamma)$ to establish global linear convergence for HB. Yet, even the best rate, $1-\mathcal{O}(1/\kappa(f))$, achieved by \cite{shi2022understanding}, matches gradient descent and does not attain the accelerated rate $1-\mathcal{O}(1/\sqrt{\kappa(f)})$.
	
	The lack of acceleration is not merely an analytical challenge. Goujaud, Taylor, and Dieuleveut~\cite{goujaud2023provable} recently showed that HB in the form of \eqref{eq:hbm} inherently fails to achieve accelerated rates for smooth and strongly convex optimization. Specifically, for any positive $(\beta, \gamma)$, either there exists an $L$-smooth, $\mu$-strongly convex function and initialization such that HB fails to converge, or for quadratic objectives, HB’s convergence rate remains non-accelerated at $1-\mathcal{O}(1/\kappa(f))$.  
	
	Recently, Wei and Chen~\cite{wei2024accelerated} introduced the AOR-HB method (cf. \eqref{AOR-HB}) and rigorously proved that AOR-HB achieves global linear convergence with an accelerated rate, closing a long-standing theoretical gap in optimization theory.  
	
	We will show that AOR-HB is a special case of AOR-VOS methods. The flow is presented in Section \ref{sec:VOSstrongconvex} with $\nabla F(x) = \nabla f(x) - \mu x$ and $N(y) = \mu y$, which is equivalent to an HB model; see \eqref{eq: HB flow}.  
	
	Eliminating $y_k$ in AOR-VOS scheme \eqref{eq:AOR-HB} leads to the formulation 
	\begin{equation}\label{eq:AOR-HB triple alpha}
		x_{k+1} = x_k -\frac{\alpha^2}{(1+\alpha)^2\mu} \left [2\nabla f(x_{k}) - \nabla f(x_{k-1})\right ] + \frac{1+\alpha^2}{(1+\alpha)^2}(x_k - x_{k-1}).
	\end{equation}
	Setting $\alpha = \sqrt{\mu/ L_F} = \sqrt{\mu/ (L-\mu)}$, we get the triple-term formula:
	\begin{equation*}\label{eq:AOR-HB triple}
		\begin{aligned}
			x_{k+1} &= x_k - \gamma (2\nabla f(x_{k}) - \nabla f(x_{k-1})) + \beta (x_k - x_{k-1}),
		\end{aligned}
	\end{equation*}
	where the parameters are  
	\begin{equation}\label{eq:gammabeta}
		\gamma = \frac{1}{L  + 2\sqrt{\mu (L-\mu)}}, \quad \quad \beta = L\gamma.
	\end{equation}
	The most notable and straightforward change in \eqref{eq:AOR-HB triple alpha} is replacing $\nabla f(x_k)$ with $2\nabla f(x_{k}) - \nabla f(x_{k-1})$ to approximate $\nabla f(x)$. The parameter choice \eqref{eq:gammabeta}, derived from AOR-VOS, offers a slight improvement over AOR-HB, which was recently developed in \cite{wei2024accelerated} and the convergence analysis is also improved by removing the factor $1/2$ in front of $\sqrt{\mu/L_F}$.
	
	The modified Lyapunov function in Theorem \ref{thm:convergence rate of AOR-VOS} is:  
	\[
	\mathcal{E}^{\alpha}(x, y) = \mathcal{E}(x, y) - \alpha \langle \nabla f_{-\mu}(x) - \nabla f_{-\mu}(x^{\star}), y - x^{\star} \rangle,
	\]
	which expands to:  
	\[
	\mathcal{E}^{\alpha}(x, y) = f(x) - f(x^{\star}) - \frac{\mu}{2} \| x - x^{\star} \|^2 + \frac{\mu}{2} \| y - x^{\star} \|^2 - \alpha \langle \nabla f(x) - \nabla f(x^{\star}) - \mu (x - x^{\star}), y - x^{\star} \rangle.
	\]
	This function contains negative squared terms and a challenging cross term, making the proof of its accelerated linear convergence rate 
	$$
	\mathcal{E}^{\alpha}(x_{k+1}, y_{k+1}) \leqslant \frac{1}{1 + \sqrt{\mu / L_F}} \mathcal{E}^{\alpha}(x_k, y_k), \quad k \geq 0,  
	$$
	non-trivial without using our framework.  
	
	\begin{remark}\rm 
		One can also derive \eqref{eq:AOR-HB triple alpha} by considering 	the flow \eqref{eq:splitflow-ex} where 
		\[
		\begin{aligned}
			\theta\nabla^2f(x)(y-x)={}&\theta\nabla^2f(x)x'=\theta\frac{\rm d}{\rm d t}\nabla f(x)\approx{}\theta\cdot\frac{\nabla f(x_{k+1})-\nabla f(x_k)}{\theta}=\nabla f(x_{k+1})-\nabla f(x_k).
		\end{aligned}
		\]
	$\Box$\end{remark}
	
	\subsubsection{\bf Accelerated Proximal-Gradient Method}
	Consider the composite optimization problem:  
	\begin{equation*}
		\min_x (f(x) + g(x)),  
	\end{equation*}  
	where $f$ is $\mu$-strongly convex and $L$-smooth, and $g$ is convex but might be non-smooth. The flow is presented in Section \ref{sec:VOScomposite} with $\nabla F(x) = \nabla f(x) - \mu x,$ and $N(y) = \partial g(y) + \mu y.$ 
	
	For the ease of notation, we first present AOR-VOS for smooth $g$:
	\begin{equation}\label{eq:composite}
		\begin{aligned}
			\frac{x_{k+1}-x_k}{\alpha} &= 2y_{k+1} - y_k - x_{k+1}, \\
			\frac{y_{k+1}-y_k}{\alpha} &= - \frac{1}{\mu}\nabla f(x_k) + x_{k} - y_{k+1} - \frac{1}{\mu}\nabla g(y_{k+1}).
		\end{aligned}
	\end{equation}
	Given the proximal operator of $g$:
	$$  \proxi_{\lambda g}(x)=\arg \min_{v}\left(g(v)+{\frac {1}{2\lambda }}\|v-x\|^{2}\right),$$
	we can solve $y_{k+1}$ from the second equation of \eqref{eq:composite} and then update $x_{k+1}$ to get an equivalent scheme
	\begin{equation}\label{eq:compositenonsmooth}
		\begin{aligned}
			y_{k+1} &= \proxi_{\frac{\alpha}{(1+\alpha)\mu} g} (\frac{1}{1+\alpha} y_k + \frac{\alpha}{1+\alpha}\frac{1}{\mu} (- \nabla f(x_k) + \mu x_k)),\\
			x_{k+1} &= \frac{1}{1+\alpha}(x_k + 2\alpha y_{k+1} - \alpha y_k).
		\end{aligned}
	\end{equation}
	The equivalent form \eqref{eq:compositenonsmooth} can be applied to a non-smooth convex function $g$ as well. 
	
	The EPC-VOS scheme \eqref{eq:NAG-GS-1st-extra} is
	\begin{equation*}\label{eq:compositePC}
		\begin{aligned}
			\tilde x_{k+1}={}& \frac{1}{1+\alpha}(x_k + \alpha y_{k}),\\
			y_{k+1} ={}& \proxi_{\frac{\alpha}{(1+\alpha)\mu} g} (\frac{1}{1+\alpha} y_k + \frac{\alpha}{1+\alpha}\frac{1}{\mu} (- \nabla f(\tilde{x}_{k+1}) + \mu \tilde{x}_{k+1}))\\
			x_{k+1} ={}& \frac{1}{1+\alpha}(x_k + \alpha y_{k+1}).
		\end{aligned}
	\end{equation*}
	
	The convergence proof is identical to the strongly convex optimization with $f$ only as $g$ is not in the Lyapunov function (by treating $\nabla g$ implicitly). 
	For the non-smooth convex function $g$, the only difference in the analysis is when verifying the strong Lyapunov property, use
	$$
	\langle \chi-\eta, x - y\rangle \geq 0, \quad \forall ~ x, y \in \mathbb{R}^d \text{ and } \chi \in \partial g(x), \eta \in  \partial g(y).
	$$
	
	The numerical examples in \cite{wei2024accelerated} shows AOR-VOS schemes outperforms the fast iterative shrinkage-thresholding algorithm (FISTA)~\cite{beck2009fast} and the accelerated proximal gradient (APG) algorithm proposed in~\cite{li2015accelerated} when applied to LASSO problem \cite{Tibshi1996Regression}. 
	
	\section{Accelerated Gradient and Skew-Symmetric Splitting Methods}\label{sec:AGSS}
	
	In this section, we consider a class of monotone operator equations:
	\begin{equation}\label{eq:dec}
		\mathcal{L}(x) = \nabla f(x) + \mathcal N x = 0,
	\end{equation}
	with a $\mu$-strongly convex function $f\in \mathcal S^{1}_{\mu, L}$, and a linear and skew-symmetric operator $\mathcal N$, i.e., $\mathcal N^{\intercal} = -\mathcal N$. Based on the structure of $\mathcal N$, we can develop full explicit schemes. 
	

	\subsection{Flow and Lyapunov function}
	It fits the variable and operator splitting framework with $F(x) = f_{-\mu}(x)$ and $N(y) = \mathcal N y + \mu y$. The VOS flow is
	\begin{equation}\label{eq:intro AG}
		\left \{\begin{aligned}
			x^{\prime} &= y - x ,\\
			y^{\prime} & = - \mu^{-1}(\nabla f(x) - \mu x  + \mu y + \mathcal N y).
		\end{aligned}\right .
	\end{equation}
	Since the gradient and the skew-symmetric matrix split, we name discretization of \eqref{eq:intro AG} as Accelerated Gradient and skew-Symmetric Splitting (AGSS) methods~\cite{chen2023accelerated}. 
	
	We will use the Lyapunov function
	$$\mathcal{E}(x, y)  := D_{f_{-\mu}}(x, x^{\star}) + \frac{\mu}{2}\|y-x^{\star}\|^2$$
	and denote the vector filed of  \eqref{eq:intro AG} as $\mathcal G(x,y) = (\mathcal G^x(x,y), \mathcal G^y(x,y))$. The strong Lyapunov property is a consequence of Theorem \ref{th:SLy1}. 
	
	\subsection{Explicit-Implicit AGSS scheme}
	If we treat $\mathcal N$ implicitly, then the algorithm and convergence analysis is the same as the previous section. For example, the AOR-VOS is:
	\begin{equation}\label{eq: intro IMEX}
		\begin{aligned}
			\frac{x_{k+1}-x_k}{\alpha} &=  2 y_{k+1} - y_k - x_{k+1},  \\
			\frac{y_{k+1}-y_k}{\alpha} &=   - \mu^{-1} \left( \nabla f(x_{k}) -\mu x_{k}  + \mathcal Ny_{k+1} + \mu y_{k+1} \right).
		\end{aligned}
	\end{equation}
	The scheme \eqref{eq: intro IMEX} is implicit for $y_{k+1}$ and each iteration needs to solve a linear equation 
	\begin{equation}\label{eq:linearskeweqn}
		(\beta  I + \mathcal N)y_{k+1} = b( x_{k}, y_k)
	\end{equation}
	associated to a shifted skew-symmetric system with $\beta = \mu(1 +1/\alpha)$. 
	
	With the modified Lyapunov function
	\begin{equation}\label{eq: modified E AGSS-SI}
		\begin{aligned}
			\mathcal{E}^{\alpha}(x, y) :&= \mathcal{E}(x, y) - \alpha \inprd{\nabla f_{-\mu}(x) - \nabla f_{-\mu}(x^{\star}), y - x^{\star}},
		\end{aligned}
	\end{equation}
	we obtain the following convergence rate as the consequence of Theorem \ref{thm:convergence rate of AOR-VOS}. EPC-VOS can be also applied and achieve a similar accelerated rate.
	
	\begin{proposition}[Convergence rate of Explicit-Implicit AGSS]\label{pro:AGSSimplicit}
		Assume $f\in S^{1,1}_{\mu, L}$ and $\mathcal N$ is a skew-symmetric matrix. Let $(x_k, y_k)$ be the iteration of scheme \eqref{eq: intro IMEX} with the initial value $(x_0, y_0)$ and step size $\alpha = \sqrt{\mu/(L-\mu)}$. Then, $\mathcal{E}^\alpha(x, y)$ defined as \eqref{eq: modified E AGSS-SI} serves as a Lyapunov function, and the method achieves accelerated linear convergence:
		\begin{equation*}
			\begin{aligned}
				\mathcal{E}^\alpha(x_{k+1}, y_{k+1}) \leq \left(\frac{1}{1+\alpha}\right)^{k+1} \mathcal{E}^\alpha(x_{0}, y_{0}), \quad k \geq 0,
			\end{aligned}
		\end{equation*}
		and
		\begin{equation}
			\mathcal{E}(x_{k+1}, y_{k+1}) \leq \frac{1}{\alpha} \left(\frac{1}{1+\alpha}\right)^k \mathcal{E}^\alpha(x_0, y_0), \quad k \geq 0.
		\end{equation}
	\end{proposition}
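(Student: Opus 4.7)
The plan is to recognize this proposition as a direct corollary of Theorem~\ref{thm:convergence rate of AOR-VOS} applied to the particular splitting $F = f_{-\mu}$ and $N(y) = \mathcal{N}y + \mu y$, so the task reduces to verifying that the hypotheses of Theorem~\ref{thm:convergence rate of AOR-VOS} hold for this choice.

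First I would check the smoothness and convexity of $F$. Since $f \in \mathcal{S}^{1}_{\mu,L}$, the shifted function $f_{-\mu}(x) = f(x) - \frac{\mu}{2}\|x\|^2$ belongs to $\mathcal{S}^{1}_{0, L-\mu}$, so $F$ is convex with Lipschitz gradient constant $L_F = L - \mu$. This identifies the step size $\alpha = \sqrt{\mu/(L-\mu)} = \sqrt{\mu/L_F}$ with the one prescribed in Theorem~\ref{thm:convergence rate of AOR-VOS}.

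Next I would verify that $N(y) = \mathcal{N}y + \mu y$ is $\mu$-strongly monotone. For any $x, y$, using the skew-symmetry $\mathcal{N}^{\intercal} = -\mathcal{N}$, which implies $(\mathcal{N}z, z) = 0$ for all $z$, I compute
\[
(N(x) - N(y), x - y) = (\mathcal{N}(x-y), x-y) + \mu\|x-y\|^2 = \mu\|x-y\|^2,
\]
so condition~\eqref{eq:Nmu} holds. Finally, I would observe that the scheme~\eqref{eq: intro IMEX} is literally the AOR-VOS discretization~\eqref{eq:AOR-HB} for this $F$ and $N$: the $x$-update is identical, and the $y$-update reads $\frac{y_{k+1}-y_k}{\alpha} = -\frac{1}{\mu}(\nabla F(x_k) + N(y_{k+1}))$ after substituting the definitions.

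With all hypotheses verified, Theorem~\ref{thm:convergence rate of AOR-VOS} applies verbatim, yielding both the contraction of $\mathcal{E}^\alpha$ and the bound on $\mathcal{E}$, with the factor $\sqrt{L_F/\mu} = 1/\alpha$ in front of $\mathcal{E}^\alpha(\bs z_0)$ in the second estimate. There is essentially no obstacle here; the only substantive point is the skew-symmetry computation that makes $N$ strongly monotone with constant exactly $\mu$, which is precisely why the variable-operator splitting framework encompasses the AGSS setting without any modification of the convergence analysis.
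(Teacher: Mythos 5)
Your proposal is correct and follows the paper's own reasoning: the paper simply asserts that the proposition is a consequence of Theorem~\ref{thm:convergence rate of AOR-VOS}, and your verification of the hypotheses (the smoothness constant $L_F = L - \mu$ for $F = f_{-\mu}$, the $\mu$-strong monotonicity of $N(y) = \mathcal{N}y + \mu y$ via skew-symmetry, and the identification of the scheme) supplies exactly the details the paper leaves implicit.
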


	
	When the operator $\mathcal L$ is linear, we have the following decomposition:
	$$
	\mathcal L = \mathcal A^{\rm s} + \mathcal N,
	$$
	where $\mathcal A^{\rm s} = (\mathcal L + \mathcal L^{\intercal})/2$ is the symmetric (Hermitian for complex matrices) part and $\mathcal N = (\mathcal L - \mathcal L^{\intercal})/2$ is the skew-symmetric part. The condition $\mathcal L$ is $\mu$-strongly monotone is equivalent to  $\mathcal A^{\rm s}$ is Hermitian and positive definite and $\lambda_{\min}( \mathcal A^{\rm s})\geq \mu$. Bai, Golub, and Ng in~\cite{bai2003hermitian} proposed the Hermitian/skew-Hermitian splitting method (HSS) for solving general non-Hermitian positive definite linear systems $\mathcal L x  = b$:
	\begin{equation}\label{eq:HSS}
		\begin{aligned}
			(\alpha I+ \mathcal A^{\rm s}) x_{ k+\frac{1}{2}} &=(\alpha I-\mathcal N) x_{k}+b \\ 
			(\alpha I+\mathcal N) x_{k+1} &=(\alpha I-\mathcal A^{\rm s}) x_{k+\frac{1}{2}}+b.
		\end{aligned}
	\end{equation}
	The iterative method~\eqref{eq:HSS} solves the equations for the symmetric (Hermitian) part and skew-symmetric (skew-Hermitian) part alternatively. For the HSS method~\eqref{eq:HSS}, efficient solvers for linear operators $(\alpha I + \mathcal A^{\rm s})^{-1}$ and $(\alpha I+ \mathcal N)^{-1}$ are needed. A linear convergence rate of $\displaystyle \frac{\sqrt{\kappa (\mathcal A^{\rm s})}-1}{\sqrt{\kappa (\mathcal A^{\rm s})}+1}$ can be achieved for an optimal choice of parameter $\alpha$. Several variants of HSS are derived and analyzed in~\cite{bai2007accelerated,bai2007successive}. 
	
	For linear systems, compared with HSS, AGSS achieves the same accelerated rate without treating the symmetric part implicitly, i.e., no need to compute $(\alpha I + \mathcal A^{\rm s})^{-1}$, and therefore significantly improve the efficiency; see \cite{chen2023accelerated} for numerical examples. More importantly, AGSS can handle non-linear problems while HSS is restricted to linear algebraic systems only. 
	
	\subsection{Explicit AGSS scheme}
	As $\mathcal N$ is skew-symmetric, we can write
	$$
	\mathcal N = B^{\intercal} - B, \quad \text{with } B^{\intercal} = {\rm upper}(\mathcal N).
	$$
	Let $B^{\sym} = B + B^{\intercal}$ be a symmetrization of $B$. We can split $\mathcal N$ as 
	$$
	\mathcal N = B^{\sym}-2B, \quad \text{and }\ \mathcal N = 2 B^{\intercal} - B^{\sym}.
	$$
	To avoid computing $( \beta I +   \mathcal N)^{-1}$, we develop an explicit AGSS scheme by applying AOR technique to $\mathcal N$ as well: 
	\begin{equation}\label{eq:intro-agss}
		\begin{aligned}
			\frac{x_{k+1}-x_k}{\alpha} &= 2y_{k+1} - y_k - x_{k+1}, \\
			\frac{y_{k+1}-y_k}{\alpha} &= x_{k} - y_{k+1}- \mu^{-1}\left( \nabla f(x_{k}) + B^{\rm sym} y_k - 2B y_{k+1} \right).
		\end{aligned}
	\end{equation}
	Notice that as $B$ is lower triangular, $y_{k+1}$ can be computed by inverting $(\beta I + B)^{-1}$ which is a forward substitution. We denote $L_{B^{\rm sym}} = \|B^{\rm sym}\|$.

	We consider the modified Lyapunov function
	\begin{equation}\label{eq: Lya fun AGSS}
		\begin{aligned}
			\mathcal E^{\alpha}(x,y)  := D_{f_{-\mu}}(x, x^{\star}) + \frac{1}{2}\|y-x^{\star}\|^2_{\mu I - \alpha B^{\rm sym}} - \alpha (\nabla f_{-\mu}(x) - \nabla f_{-\mu}(x^{\star}), y - x^{\star}),
		\end{aligned}
	\end{equation}
	and bound the cross terms similar to Lemma \ref{lem:cross Breg} and skip the similar proof here. 
	
	\begin{lemma}\label{lem:cross Breg AGSS} 
		Suppose $f$ is $\mu$-strongly convex and $L$-smooth. For any two points $(x, y)$ and $(\hat x, \hat y)$ and any $\beta \in (0,1)$ , we have the following inequality
		\begin{align*}
			&| \inprd{\nabla f_{-\mu}(\hat x) - \nabla f_{-\mu}(x), \hat y - y} + \frac{1}{2} \| \hat y - y \|_{B^{\sym}}^2  | - \frac{L_{B^{\sym}}}{2}\| \hat y - y \|^2   \\
			\leq &~ \sqrt{ \frac{L - \mu}{\beta \mu}} \left( \min \{D_{f_{-\mu}}(x, \hat x), D_{f_{-\mu}}(\hat x, x)\}+ \frac{\beta \mu}{2}\| \hat y - y \|^2 \right).
		\end{align*}
	\end{lemma}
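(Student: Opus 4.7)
The plan is to mirror the proof of Lemma \ref{lem:cross Breg}, adapted to $f_{-\mu}\in\mathcal S^{1}_{0,L-\mu}$ and with the extra $B^{\sym}$ quadratic term absorbed by a simple triangle-inequality step. First I would isolate the gradient cross term from the $B^{\sym}$ piece: by the triangle inequality
\[
\Big| \inprd{\nabla f_{-\mu}(\hat x) - \nabla f_{-\mu}(x), \hat y - y} + \tfrac{1}{2} \| \hat y - y \|_{B^{\sym}}^2 \Big|
\leq |A| + \tfrac{1}{2}\bigl|\|\hat y - y\|_{B^{\sym}}^2\bigr|,
\]
where $A:=\inprd{\nabla f_{-\mu}(\hat x) - \nabla f_{-\mu}(x), \hat y - y}$. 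Since $L_{B^{\sym}} = \|B^{\sym}\|$, the spectral bound gives $|\|\hat y - y\|_{B^{\sym}}^2| \leq L_{B^{\sym}}\|\hat y - y\|^2$, which exactly cancels the subtracted term $\frac{L_{B^{\sym}}}{2}\|\hat y - y\|^2$ on the left-hand side. Hence the whole task reduces to bounding $|A|$ by the right-hand side of the lemma.

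Next I would apply Cauchy--Schwarz and Young's inequality with the balanced parameter $\sqrt{\beta\mu(L-\mu)}$:
\[
|A| \leq \frac{1}{2\sqrt{\beta\mu(L-\mu)}}\|\nabla f_{-\mu}(\hat x) - \nabla f_{-\mu}(x)\|_*^2 + \frac{\sqrt{\beta\mu(L-\mu)}}{2}\|\hat y - y\|^2.
\]
Because $f\in\mathcal S^{1}_{\mu,L}$ implies $f_{-\mu}$ is convex with $(L-\mu)$-Lipschitz gradient, the co-convexity bound \eqref{eq:philowerL} applied to $f_{-\mu}$ yields
\[
\|\nabla f_{-\mu}(\hat x) - \nabla f_{-\mu}(x)\|_*^2 \leq 2(L-\mu)\min\{D_{f_{-\mu}}(x,\hat x), D_{f_{-\mu}}(\hat x, x)\}.
\]
Substituting and using $\sqrt{\beta\mu(L-\mu)}/2 = \sqrt{(L-\mu)/(\beta\mu)} \cdot (\beta\mu)/2$ recovers exactly the RHS of the lemma.

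There is essentially no obstacle beyond choosing the right Young's parameter, which I have pinned by matching the weight in front of $\|\hat y - y\|^2$ to $\beta\mu/2$. The role of the tunable constant $\beta\in(0,1)$ is to later absorb the $\beta\mu/2\|\hat y-y\|^2$ term into the squared-distance piece of the modified Lyapunov function \eqref{eq: Lya fun AGSS} while leaving a strictly positive residue $(1-\beta)\mu/2\|\hat y-y\|^2$ from the strong Lyapunov property; the inequality above is tight enough to carry the accelerated step-size analysis through in the convergence theorem for the explicit AGSS scheme \eqref{eq:intro-agss}.
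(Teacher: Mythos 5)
Your proposal is correct and follows exactly the route the paper has in mind: the paper explicitly says it ``bound[s] the cross terms similar to Lemma~\ref{lem:cross Breg} and skip[s] the similar proof here,'' and your argument is the natural adaptation of that proof. The triangle inequality plus the spectral bound $|\|\hat y - y\|_{B^{\sym}}^2|\leq L_{B^{\sym}}\|\hat y-y\|^2$ cleanly cancels the extra $-\tfrac{L_{B^{\sym}}}{2}\|\hat y-y\|^2$ term, reducing the claim to a Cauchy--Schwarz/Young bound on the gradient cross term with Young's parameter $\sqrt{\beta\mu(L-\mu)}$, followed by co-convexity \eqref{eq:philowerL} for $f_{-\mu}\in\mathcal S^{1}_{0,L-\mu}$; the algebra checks out and matches the stated right-hand side.
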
	
	\begin{theorem}\label{th:AGSS}
		Suppose $f$ is $\mu$-strongly convex and $L$-smooth. For the step size $$\displaystyle \alpha= \max_{\beta \in (0,1)} \min \left \{\sqrt{\frac{\beta\mu}{L - \mu}}, \frac{(1-\beta) \mu}{L_{B^{\rm sym}}} \right \},$$
        $\mathcal E^{\alpha}(x,y)$ defined as \eqref{eq: Lya fun AGSS} serves as the Lyapunov function and we obtain the accelerated linear convergence for scheme \eqref{eq:intro-agss}:
		$$
		\mathcal E^{\alpha}(x_{k+1}, y_{k+1}) \leq  \left ( \frac{1}{1 + \alpha} \right)^{k+1}\mathcal E^{\alpha}(x_0, y_0), \quad k \geq 0,
		$$
		and
		\begin{equation*}
			\mathcal{E}(x_{k+1}, y_{k+1}) \leq \frac{1}{\alpha} \left(\frac{1}{1+\alpha}\right)^k \mathcal{E}^\alpha(x_0, y_0), \quad k \geq 0.
		\end{equation*}
	\end{theorem}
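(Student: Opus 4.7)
The plan is to mirror the proof of Theorem \ref{thm:convergence rate of AOR-VOS} (AOR-VOS), now tracking the extra terms introduced by treating the skew-symmetric operator $\mathcal N$ explicitly via the AOR substitution $\mathcal N y \approx B^{\sym} y_k - 2 B y_{k+1}$. First I would rewrite the scheme \eqref{eq:intro-agss} as a corrected implicit Euler discretization of the VOS flow \eqref{eq:intro AG},
\[
\bs z_{k+1} - \bs z_k = \alpha\, \mathcal G(\bs z_{k+1}) + \alpha
\begin{pmatrix}
 y_{k+1} - y_k \\
 \mu^{-1}(\nabla f(x_{k+1}) - \nabla f(x_k)) + \mu^{-1} B^{\sym}(y_k - y_{k+1})
\end{pmatrix},
\]
so that the AOR residuals in both $x$ and $y$ updates are collected in a single correction.

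Next I would compute $\mathcal E(\bs z_{k+1}) - \mathcal E(\bs z_k)$ using the identity $\mathcal E(\bs z_{k+1}) - \mathcal E(\bs z_k) = \inprd{\nabla \mathcal E(\bs z_{k+1}),\bs z_{k+1}-\bs z_k} - D_{\mathcal E}(\bs z_k,\bs z_{k+1})$, substitute the correction form, and isolate two kinds of cross terms. The $\nabla f_{-\mu}$ cross term $\alpha\inprd{\nabla f_{-\mu}(x_{k+1})-\nabla f_{-\mu}(x^{\star}), y_{k+1}-y_k}$ is rearranged exactly as in \eqref{eq:Ealphaidentity}, producing the extra functional $-\alpha(\nabla f_{-\mu}(x)-\nabla f_{-\mu}(x^{\star}), y-x^{\star})$ in $\mathcal E^{\alpha}$. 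The $B^{\sym}$ cross term $\alpha\inprd{\mu(y_{k+1}-x^{\star}), \mu^{-1}B^{\sym}(y_{k+1}-y_k)}$ is symmetric, so the identity of squares \eqref{eq:squares} turns it into
\[
\tfrac{\alpha}{2}\bigl(\|y_{k+1}-x^{\star}\|^2_{B^{\sym}} - \|y_k-x^{\star}\|^2_{B^{\sym}} + \|y_{k+1}-y_k\|^2_{B^{\sym}}\bigr),
\]
which is precisely absorbed into the weighted-norm piece $\frac{1}{2}\|y-x^{\star}\|^2_{\mu I - \alpha B^{\sym}}$ of $\mathcal E^{\alpha}$.

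After these bookkeeping steps one obtains an identity of the form
\[
\mathcal E^{\alpha}(\bs z_{k+1}) - \mathcal E^{\alpha}(\bs z_k) = \alpha\inprd{\nabla\mathcal E(\bs z_{k+1}),\mathcal G(\bs z_{k+1})} - D_{\mathcal E}(\bs z_k,\bs z_{k+1}) + R_k,
\]
where $R_k$ collects the AOR residual cross term for $\nabla f_{-\mu}$ together with $\tfrac{\alpha}{2}\|y_{k+1}-y_k\|^2_{B^{\sym}}$. I would then invoke Lemma \ref{lem:cross Breg AGSS}: under $\alpha\leq\sqrt{\beta\mu/(L-\mu)}$ the quantity $R_k$ is dominated by $D_{\mathcal E}(\bs z_k,\bs z_{k+1})+\tfrac{L_{B^{\sym}}}{2}\alpha\|y_{k+1}-y_k\|^2$, and the strong Lyapunov property (Theorem \ref{th:SLy1} transferred to $F = f_{-\mu}$ and $N = \mathcal N + \mu I$) furnishes the extra positive terms $\alpha D_{f_{-\mu}}(x^{\star},x_{k+1}) + \tfrac{\alpha\mu}{2}\|y_{k+1}-x^{\star}\|^2$ which, combined with the identification $\mathcal E^{\alpha} \leq \mathcal E$ plus the absorbed cross term, yield
\[
\mathcal E^{\alpha}(\bs z_{k+1}) - \mathcal E^{\alpha}(\bs z_k) \leq -\alpha\,\mathcal E^{\alpha}(\bs z_{k+1}),
\]
and the $1/(1+\alpha)$ contraction follows. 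The companion estimate for $\mathcal E$ comes by telescoping and using $\alpha\mathcal E(\bs z_{k+1})\leq \mathcal E^{\alpha}(\bs z_k)-\mathcal E^{\alpha}(\bs z_{k+1})$, exactly as in Theorem \ref{thm:convergence rate of AOR-VOS}.

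The main obstacle I expect is guaranteeing $\mathcal E^{\alpha}\geq 0$, because the weight $\mu I - \alpha B^{\sym}$ is indefinite whenever $\alpha L_{B^{\sym}} > \mu$, and the extra term $-\alpha(\nabla f_{-\mu}(x)-\nabla f_{-\mu}(x^{\star}),y-x^{\star})$ can also be negative. Splitting $\mu I = \beta \mu I + (1-\beta)\mu I$ for $\beta\in(0,1)$, one uses the $\beta\mu I$ share together with $D_{f_{-\mu}}(x,x^{\star})$ to neutralize the $\nabla f_{-\mu}$ cross term via the refined co-coercivity bound of Lemma \ref{lem:cross Breg AGSS} under $\alpha\leq\sqrt{\beta\mu/(L-\mu)}$, and the remaining $(1-\beta)\mu I$ must dominate $\alpha B^{\sym}$, forcing $\alpha\leq (1-\beta)\mu/L_{B^{\sym}}$. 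Optimizing over $\beta\in(0,1)$ produces the step size stated in the theorem, and the same step-size condition is exactly what is needed to cancel $R_k$ against $D_{\mathcal E}(\bs z_k,\bs z_{k+1})$ in the decay argument.
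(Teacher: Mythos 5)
Your proposal reproduces the paper's proof in substance: rewrite \eqref{eq:intro-agss} as a corrected implicit Euler step, isolate the $\nabla f_{-\mu}$ cross term (as in \eqref{eq:Ealphaidentity}) and the additional $B^{\sym}$ cross term, expand the latter via the identity of squares so that the $\pm\tfrac{\alpha}{2}\|y-x^\star\|^2_{B^{\sym}}$ parts are absorbed into $\mathcal E^{\alpha}$, and then invoke Lemma \ref{lem:cross Breg AGSS} together with the $\beta$-split of $\mu I$ to simultaneously control both cross terms and ensure $\mathcal E^{\alpha}\geq 0$ under $\alpha\leq\sqrt{\beta\mu/(L-\mu)}$ and $\alpha\leq (1-\beta)\mu/L_{B^{\sym}}$. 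This is precisely the paper's argument; the only blemish is a sign/notation slip in your displayed correction vector, whose $y$-component should read $\mu^{-1}\bigl(\nabla f_{-\mu}(x_{k+1})-\nabla f_{-\mu}(x_k)\bigr)+\mu^{-1}B^{\sym}(y_{k+1}-y_k)$ (the $x_k$ versus $x_{k+1}$ discrepancy in both the $x$ and $\nabla f$ slots combines into $\nabla f_{-\mu}$), but your subsequent handling of the cross term $\alpha(y_{k+1}-x^\star, y_{k+1}-y_k)_{B^{\sym}}$ shows you are in fact using the correct signs, so this is a transcription typo rather than a gap.
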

	\begin{proof}
		The proof follows with that of Theorem \ref{thm:convergence rate of AOR-VOS} with minor modifications. In the error equation, we will have one more cross term 
		$$
		\begin{aligned}
			&\alpha (y_{k+1} - y^{\star}, y_{k+1} - y_k)_{B^{\sym}} = \frac{\alpha}{2}\left (\|y_{k+1} - y^{\star} \|_{B^{\sym}}^2 + \|y_{k+1} - y_k \|_{B^{\sym}}^2 - \|y_{k} - y^{\star} \|_{B^{\sym}}^2  \right ).
		\end{aligned}
		$$
		By Lemma \ref{lem:cross Breg AGSS}, we use part of the quadratic term, i.e., $\frac{\beta \mu}{2}\|y-x^{\star}\|^2$ to control the cross term of $\nabla f_{-\mu}$ for $\alpha \leq \sqrt{\frac{\beta \mu}{L-\mu}}$ and rest $\frac{(1-\beta) \mu}{2}\|y-x^{\star}\|^2$ to control $\frac{\alpha}{2}\|y - y^{\star} \|_{B^{\sym}}^2$ which requires $\alpha\leq (1-\beta)\mu/L_{B^{\rm sym}}$. We can solve for an optimal $\beta$ by the step size formulae on $\alpha$. 
	$\Box$\end{proof}
	
	\begin{remark}\rm 
		Compared to the scheme \eqref{eq: intro IMEX} implicit in $\mathcal N$, the coupling of $\mathcal N$ and $\nabla F$ is through $\mu/L_{B^{\sym}}$. In case $\mu/L_{B^{\sym}}\ll \sqrt{\mu/L}$ and evaluation of $\nabla f$ is more costly than solving the linear equation, we can use the explicit-implicit scheme \eqref{eq: intro IMEX} as an outer iteration and solve the linear equation \eqref{eq:linearskeweqn} inexactly. The tolerance of the inner linear solver can be dynamically decreasing.
	$\Box$\end{remark}
	
	\subsection{Application to saddle point systems}
	Consider the saddle point problem:
	$$
	\min_{u \in \mathbb{R}^m} \max_{p \in \mathbb{R}^n} \mathcal{H}(u, p) = f(u) - g(p) + (Bu, p),
	$$
	where $f$ and $g$ are strongly convex functions with constant $\mu_f > 0$ and $\mu_g > 0$, and $B$ is a bilinear coupling operator. The splitting framework provides the following 
	$$
	\begin{pmatrix}
		\partial_u \mathcal  H(u,p) \\
		- \partial_p \mathcal  H(u,p) 
	\end{pmatrix}
	=   \begin{pmatrix}
		\nabla f & B^{\intercal} \\
		-  B &  \nabla g
	\end{pmatrix}\begin{pmatrix}
		u \\
		p
	\end{pmatrix} = \underbrace{\begin{pmatrix}
			\nabla f & 0 \\
			0 &  \nabla g
		\end{pmatrix}\begin{pmatrix}
			u \\
			p
	\end{pmatrix}}_{\nabla F_{+\mu}} +\underbrace{\begin{pmatrix}
			0 &  B^{\intercal} \\
			-  B &  0
	\end{pmatrix}}_{N_{-\mu}}\begin{pmatrix}
		u \\
		p
	\end{pmatrix}$$
	where $F(x) :=  f(u) -\frac{\mu_f}{2}\|u\|^2+ g(p) - \frac{\mu_g}{2}\|p\|^2$ with a component splitting of $\mu$ using $\mu_f$ and $\mu_g$ respectively. Here we flip the sign of $\partial_p\mathcal H$ so that: for all $x = (u,p)^{\intercal}, y = (v,q)^{\intercal}$
	$$
	\langle \nabla F(x)+ N(x) - (\nabla F(y)+ N(y)), x - y \rangle \geq \min\{\mu_f, \mu_g\} \| x - y\|^2.
	$$
		The strong monotonicity has essential similarity to strongly convexity. Comparing to the convex optimization, one extra difficulty is the bilinear coupling $(Bu, p)$ which can be again handled by the AOR technique in discretization. 
		
		Recall the component form of VOS flow is
		$$
		\begin{aligned}
			u^{\prime} &= v - u, & v^{\prime} &= u - v - \frac{1}{\mu_f}(\nabla f(u) + B^{\intercal}q), \\
			p^{\prime} &= q - p, & q^{\prime} &= p - q - \frac{1}{\mu_g}(\nabla g(p) - Bv).
		\end{aligned}
		$$  
		By treating $\mathcal N$ implicitly, we have the component form of \eqref{eq:AOR-HB} and call it AOR-VOS-Saddle-I(mplicit).
		\begin{equation}\label{eq:implicitN}
			\begin{aligned}
				\frac{u_{k+1} - u_k}{\alpha} &= 2 v_{k+1} - v_k - u_{k+1} \\
				\frac{p_{k+1} - p_k}{\alpha} &= 2 q_{k+1} - q_k - p_{k+1} \\
				\frac{v_{k+1} - v_k}{\alpha} &=  u_{k} - v_{k+1} -  \frac{1}{\mu_f}(\nabla f(u_{k}) + B^{\intercal} q_{k+1}) ,\\
				\frac{q_{k+1} - q_k}{\alpha} &= p_{k} - q_{k+1} -  \frac{1}{\mu_g}(\nabla g(p_{k}) - B v_{k+1}).
			\end{aligned} 
		\end{equation}
		The variable $(v_{k+1}, q_{k+1})$ are coupled together and can be computed by inverting the linear saddle point system 
		$$
		\begin{pmatrix}
			(1+\alpha) I & \frac{\alpha}{\mu_f}B^{\intercal} \\ -\frac{\alpha}{\mu_g}B &    (1+\alpha)I
		\end{pmatrix}
		\begin{pmatrix}
			v_{k+1}\\
			q_{k+1}
		\end{pmatrix}
		=
		\begin{pmatrix}
			v_k + \alpha u_k - \frac{\alpha}{\mu_f} \nabla f(u_k)\\
			q_k + \alpha p_k - \frac{\alpha}{\mu_g} \nabla g(p_k)
		\end{pmatrix}.
		$$ 
		It is sufficient to compute $\left ((1+\alpha)^2 I + \frac{\alpha^2}{\mu_f\mu_g}BB^{\intercal} \right)^{-1}$ or  $\left ((1+\alpha)^2 I + \frac{\alpha^2}{\mu_f\mu_g}B^{\intercal}B\right)^{-1}$, whichever is a relative small size matrix and can be further replaced by an inexact inner solver. 
		It is preferable when the size $BB^{\intercal}$ or $B^{\intercal}B$ is small. After $(v_{k+1}, q_{k+1})$ is computed, we update $(u_{k+1}, p_{k+1})$ by the first two equations.
		
		The Lyapunov function expands as 
		\begin{equation*}
			\mathcal E(u,p,v,q)  := D_{f_{-\mu_f}}(u, u^{\star}) +  D_{g_{-\mu_g}}(p, p^{\star}) +\frac{\mu_f}{2}\|v-u^{\star}\|^2 +\frac{\mu_g}{2}\|q-p^{\star}\|^2.
		\end{equation*}
		The following proposition shows the convergence rate of AOR-VOS-saddle-I method which is an easy variant of Theorem \ref{thm:convergence rate of AOR-VOS} and Proposition \ref{pro:AGSSimplicit}.
		
		\begin{proposition}[Convergence of AOR-VOS-saddle-I method]\label{thm:convergence rate of AOR-VOS-saddle-I}
			Suppose  $f$ is $\mu_f$-strongly convex and $L_f$-smooth, $g$ is $\mu_g$-strongly convex and $L_g$-smooth. Let $(u_k,v_k, p_k, q_k)$ be generated by \eqref{eq:implicitN}  with initial value $(u_0,v_0, p_0, q_0)$ and step size 
			$$\alpha =\min \left \{ \sqrt{\frac{\mu_f}{L_f - \mu_f}},  \sqrt{\frac{\mu_g}{L_g - \mu_g}}\right\}.$$ Then for the modified Lyapunov function defined as 
			$$
			\begin{aligned}
				\mathcal E^{\alpha}(u,p,v,q)  &:= D_{f_{-\mu_f}}(u, u^{\star}) +  D_{g_{-\mu_g}}(p, p^{\star}) +\frac{\mu_f}{2}\|v-u^{\star}\|^2 +\frac{\mu_g}{2}\|q-p^{\star}\|^2 \\
				& - \alpha (\nabla f(u) - \nabla f(u^{\star}) - \mu_f (u - u^{\star}), v - u^{\star}) \\
				& - \alpha (\nabla g(p) - \nabla g(p^{\star}) - \mu_g (p - p^{\star})  , q - p^{\star}),
			\end{aligned}$$
			we have the linear convergence
			\begin{equation*}
				\begin{aligned}
					&\mathcal E^{\alpha}(u_{k+1},p_{k+1},v_{k+1},q_{k+1})  \leq \left (\frac{1}{1+\alpha} \right)^{k+1} \mathcal E^{\alpha}(u_{0},p_{0},v_{0},q_{0}),\quad  k \geq 0
				\end{aligned}
			\end{equation*}
			and
			\begin{equation*}
				\begin{aligned}
					&\mathcal E(u_{k+1},p_{k+1},v_{k+1},q_{k+1})  \leq \frac{1}{\alpha}\left (\frac{1}{1+\alpha} \right)^{k} \mathcal E^{\alpha}(u_{0},p_{0},v_{0},q_{0}),\quad  k \geq 0.
				\end{aligned}
			\end{equation*}
			
		\end{proposition}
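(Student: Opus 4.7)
The plan is to reduce Proposition \ref{thm:convergence rate of AOR-VOS-saddle-I} to a direct application of Theorem \ref{thm:convergence rate of AOR-VOS} by stacking the saddle-point variables into block form and endowing the primal space with the weighted inner product induced by $\bs\mu = \mathrm{diag}(\mu_f I_m,\mu_g I_n)$. With $\bs x = (u,p)^\intercal$, $\bs y = (v,q)^\intercal$, and
$$
\nabla F(\bs x) = \begin{pmatrix} \nabla f(u) - \mu_f u \\ \nabla g(p) - \mu_g p \end{pmatrix}, \qquad N(\bs y) = \begin{pmatrix} \mu_f v + B^{\intercal} q \\ \mu_g q - B v \end{pmatrix},
$$
scheme \eqref{eq:implicitN} reads exactly as \eqref{eq:AOR-HB} with $\mu^{-1}$ replaced by $\bs\mu^{-1}$, and the candidate Lyapunov function becomes $\mathcal E(\bs x,\bs y) = D_F(\bs x,\bs x^\star) + \tfrac{1}{2}\|\bs y - \bs x^\star\|^2_{\bs\mu}$.

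First I would verify the strong Lyapunov property of the continuous VOS flow in this weighted setting. The $u$- and $p$-blocks of $\nabla F$ are $(L_f - \mu_f)$- and $(L_g - \mu_g)$-smooth respectively. The skew-symmetric coupling $\bigl(\begin{smallmatrix}0 & B^{\intercal}\\ -B & 0\end{smallmatrix}\bigr)$ contributes zero to the symmetric part of $-\inprd{\nabla\mathcal E(\bs x,\bs y),\mathcal G(\bs x,\bs y)}$, while the diagonal blocks of $N$ (namely $\mu_f I$ and $\mu_g I$) supply the required monotonicity. Theorem \ref{th:SLy1} then gives, verbatim up to block bookkeeping,
$$
-\inprd{\nabla\mathcal E(\bs x,\bs y),\mathcal G(\bs x,\bs y)} \;\geq\; \mathcal E(\bs x,\bs y) + D_F(\bs x^\star,\bs x) + \tfrac{1}{2}\|\bs y - \bs x^\star\|^2_{\bs\mu}.
$$

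Next I would mimic the discrete identity \eqref{eq:Ealphaidentity} in the proof of Theorem \ref{thm:convergence rate of AOR-VOS}, where the correction to the implicit Euler step is $\alpha\bigl(\bs y_{k+1}-\bs y_k,\;\bs\mu^{-1}(\nabla F(\bs x_{k+1})-\nabla F(\bs x_k))\bigr)^\intercal$. This produces the modified Lyapunov function $\mathcal E^\alpha$ stated in the proposition, whose two cross-terms decouple across the $u$- and $p$-blocks because $\nabla F$ is block-diagonal. Applying Lemma \ref{lem:cross Breg} separately to the $u$-block with constants $(L_f-\mu_f,\mu_f)$ and to the $p$-block with $(L_g-\mu_g,\mu_g)$ produces the two step-size constraints $\alpha \leq \sqrt{\mu_f/(L_f-\mu_f)}$ and $\alpha \leq \sqrt{\mu_g/(L_g-\mu_g)}$; their minimum is exactly the $\alpha$ in the statement. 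The contraction $\mathcal E^\alpha(\bs z_{k+1}) \leq (1+\alpha)^{-1}\mathcal E^\alpha(\bs z_k)$ and the bound on $\mathcal E(\bs z_{k+1})$ then follow as in the last two displays of Theorem \ref{thm:convergence rate of AOR-VOS}.

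The only real subtlety, and the step that needs to be checked carefully rather than copy-pasted, is that the implicit update for $(\bs v_{k+1},\bs q_{k+1})$ inverts the coupled saddle operator $\bigl(\begin{smallmatrix}(1+\alpha)I & \tfrac{\alpha}{\mu_f}B^{\intercal}\\ -\tfrac{\alpha}{\mu_g}B & (1+\alpha)I\end{smallmatrix}\bigr)$. One must confirm that this operator is invertible (which is immediate since its symmetric part is positive definite) and that the resulting $\bs y_{k+1}$ satisfies the identity $\bs z_{k+1} - \bs z_k = \alpha\mathcal G(\bs z_{k+1}) + \alpha(\bs y_{k+1}-\bs y_k,\,\bs\mu^{-1}(\nabla F(\bs x_{k+1})-\nabla F(\bs x_k)))^\intercal$ used in the discrete analysis. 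Once this identity is in place, the skew-symmetric coupling never reappears in the Lyapunov estimate, and the block-wise cross-term control finishes the proof.
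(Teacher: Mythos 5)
Your proposal is correct and matches the paper's intent: the paper itself presents Proposition \ref{thm:convergence rate of AOR-VOS-saddle-I} as ``an easy variant of Theorem \ref{thm:convergence rate of AOR-VOS} and Proposition \ref{pro:AGSSimplicit}'' rather than giving a standalone proof, and your block-diagonal embedding with the weighted metric $\bs\mu = \mathrm{diag}(\mu_f I, \mu_g I)$ is precisely the intended reduction. You correctly identify the one point where a uniform $(\mu,L_F)$ would give a weaker rate, namely applying Lemma \ref{lem:cross Breg} block-wise to the decoupled cross terms so as to obtain the per-component constraints $\alpha\le\sqrt{\mu_f/(L_f-\mu_f)}$ and $\alpha\le\sqrt{\mu_g/(L_g-\mu_g)}$, and you correctly note that the implicitly treated skew-symmetric coupling $B$ cancels against itself in the monotonicity of $N$ and never enters the cross-term bookkeeping.
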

		
		When the linear saddle point problem is not easy to compute, we use the following explicit scheme by applying AOR to the skew-symmetric matrix. 
		\begin{equation}\label{eq:explicitN}
			\begin{aligned}
				\frac{u_{k+1} - u_k}{\alpha} &= 2 v_{k+1} - v_k - u_{k+1} \\
				\frac{p_{k+1} - p_k}{\alpha} &= 2 q_{k+1} - q_k - p_{k+1} \\
				\frac{v_{k+1} - v_k}{\alpha} &=  u_{k} - v_{k+1} -  \frac{1}{\mu_f}(\nabla f(u_{k}) + B^{\intercal} q_{k}) ,\\
				\frac{q_{k+1} - q_k}{\alpha} &= p_{k} - q_{k+1} -  \frac{1}{\mu_g}(\nabla g(p_{k}) - 2B v_{k+1} + Bv_k).
			\end{aligned} 
		\end{equation}
		
		Convergence analysis is a variant of Lemma \ref{lem:cross Breg AGSS} and Theorem \ref{th:AGSS} with separated estimate using $\mu_f$ and $\mu_g$. 

		We consider the modifed Lyapunov function 
		\begin{equation}\label{eq: modified Lya AGSS-saddle}
			\begin{aligned}
				\mathcal E^{\alpha}(u,p,v,q)  &:= D_{f_{-\mu_f}}(u, u^{\star}) +  D_{g_{-\mu_g}}(p, p^{\star}) +\frac{\mu_f}{2}\|v-u^{\star}\|^2 +\frac{\mu_g}{2}\|q-p^{\star}\|^2 \\
				& - \alpha (\nabla f(u) - \nabla f(u^{\star}) - \mu_f (u - u^{\star}), v - u^{\star}) \\
				& - \alpha (\nabla g(p) - \nabla g(p^{\star}) - \mu_g (p - p^{\star})  , q - p^{\star}) - \alpha (B(v-u^{\star}), q-p^{\star}).
			\end{aligned}
		\end{equation}
		We use the estimate
		$$
		2|(Bu,p)|\leq 2\|B\|\|u\|\|p\| \leq \frac{\|B\|}{\sqrt{\mu_f\mu_g}}(\mu_f\|u\|^2 + \mu_g\|p\|^2), \quad \forall (u,p).
		$$
		to control the extra cross term $(B(v-u^{\star}), q-p^{\star})$.
		
		\begin{theorem}[Convergence of AOR-VOS-saddle method]\label{thm:convergence rate of AOR-VOS-saddle}
			Suppose  $f$ is $\mu_f$-strongly convex and $L_f$-smooth, $g$ is $\mu_g$-strongly convex and $L_g$-smooth. Let $(u_k,v_k, p_k, q_k)$ be generated by \eqref{eq:explicitN} with initial value $(u_0,v_0, p_0, q_0)$ and
			step size $$\alpha =  \max_{\beta\in (0,1)}\min \left \{\sqrt{\beta} \min \left \{\sqrt{\frac{\mu_f}{L_f - \mu_f}},  \sqrt{\frac{\mu_g}{L_g - \mu_g}} \right\} ,(1-\beta)  \frac{\sqrt{\mu_f\mu_g}}{\|B\|}\right\}.$$ Then we have the accelerated linear convergence
			\begin{equation*}
				\begin{aligned}
					&\mathcal E^{\alpha}(u_{k+1},p_{k+1},v_{k+1},q_{k+1})  \leq \left (\frac{1}{1+\alpha} \right)^{k+1} \mathcal E^{\alpha}(u_{0},p_{0},v_{0},q_{0}),\quad  k \geq 0
				\end{aligned}
			\end{equation*}
			and
			\begin{equation*}
				\begin{aligned}
					&\mathcal E(u_{k+1},p_{k+1},v_{k+1},q_{k+1})  \leq \frac{1}{\alpha}\left (\frac{1}{1+\alpha} \right)^{k} \mathcal E^{\alpha}(u_{0},p_{0},v_{0},q_{0}),\quad  k \geq 0
				\end{aligned}
			\end{equation*}
		\end{theorem}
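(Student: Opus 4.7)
The plan is to follow the template of Theorem~\ref{th:AGSS} (explicit AGSS) adapted to the product structure of the saddle-point problem, where the skew-symmetric part is the off-diagonal block involving $B$ and $B^{\intercal}$. First, I would rewrite \eqref{eq:explicitN} as a corrected implicit Euler of the VOS flow, with explicit corrections in three places: $\nabla f(u_{k})$ (instead of $\nabla f(u_{k+1})$), $\nabla g(p_{k})$ (instead of $\nabla g(p_{k+1})$), and the AOR extrapolations $B^{\intercal}q_k \to B^{\intercal}q_k$ (explicit) and $-2Bv_{k+1}+Bv_k$ (AOR of $-Bv$). Exactly as in \eqref{eq:Ealphaidentity}, these corrections translate, in the one-step Lyapunov identity, into three cross-type residuals plus the Bregman divergence $D_{\mathcal E}(\bs z_k,\bs z_{k+1})$.

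Next, I would use the identity of squares \eqref{eq:squares} to reabsorb the AOR residuals into the modified Lyapunov function \eqref{eq: modified Lya AGSS-saddle}. Two of them, coming from $\nabla f_{-\mu_f}$ and $\nabla g_{-\mu_g}$, are handled in exactly the same way as in Theorem~\ref{thm:convergence rate of AOR-VOS}: they produce the two gradient cross terms in $\mathcal E^{\alpha}$ and, after application of Lemma~\ref{lem:cross Breg AGSS} to each of $f_{-\mu_f}$ and $g_{-\mu_g}$ separately, can be controlled by a fraction $\beta$ of the quadratic parts $\tfrac{\mu_f}{2}\|v-u^{\star}\|^2+\tfrac{\mu_g}{2}\|q-p^{\star}\|^2$ together with the Bregman divergences $D_{f_{-\mu_f}}(u^{\star},u_{k+1})$, $D_{g_{-\mu_g}}(p^{\star},p_{k+1})$ produced by the strong Lyapunov property \eqref{eq:SLy1}. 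This requires $\alpha\leq \sqrt{\beta}\min\{\sqrt{\mu_f/(L_f-\mu_f)},\sqrt{\mu_g/(L_g-\mu_g)}\}$.

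The third residual, arising from the AOR on the skew-symmetric block, produces (via the identity of squares) the bilinear cross term $-\alpha (B(v-u^{\star}),q-p^{\star})$ now present in $\mathcal E^{\alpha}$, plus a squared increment $\alpha\,(B(v_{k+1}-v_k),q_{k+1}-q_k)$ which is cancelled by the $\tfrac{\mu_f}{2}\|v_{k+1}-v_k\|^2+\tfrac{\mu_g}{2}\|q_{k+1}-q_k\|^2$ piece of $D_{\mathcal E}(\bs z_k,\bs z_{k+1})$, provided $\alpha\|B\|\leq\sqrt{\mu_f\mu_g}$; for the evaluation at $\bs z_{k+1}$, the cross term $-\alpha(B(v_{k+1}-u^{\star}),q_{k+1}-p^{\star})$ is bounded via the AM-GM estimate stated just before the theorem, which absorbs it into the remaining $(1-\beta)$-fraction of $\tfrac{\mu_f}{2}\|v_{k+1}-u^{\star}\|^2+\tfrac{\mu_g}{2}\|q_{k+1}-p^{\star}\|^2$, requiring $\alpha\leq (1-\beta)\sqrt{\mu_f\mu_g}/\|B\|$. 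The same bound applied at $\bs z_k$ ensures $\mathcal E^{\alpha}\geq 0$, so it is a bona fide Lyapunov function. Combining the two step-size constraints and maximizing over $\beta\in(0,1)$ gives the prescribed $\alpha$.

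Once all cross/extra terms are neutralized, the strong Lyapunov property yields $\mathcal E^{\alpha}(\bs z_{k+1})-\mathcal E^{\alpha}(\bs z_k)\leq -\alpha\,\mathcal E^{\alpha}(\bs z_{k+1})$, from which the contraction factor $1/(1+\alpha)$ is immediate, and then $\mathcal E(\bs z_{k+1})\leq \tfrac{1}{\alpha}(\mathcal E^{\alpha}(\bs z_k)-\mathcal E^{\alpha}(\bs z_{k+1}))$ gives the bound on $\mathcal E$ exactly as in Theorem~\ref{thm:convergence rate of AOR-VOS}. The main obstacle will be the bookkeeping for the bilinear cross term: the identity of squares produces both a term evaluated at $\bs z_{k+1}$ and a step-increment term, and one must carefully use the $\mu_f,\mu_g$-quadratics (split into $\beta$ and $1-\beta$ pieces) to absorb both simultaneously; this is what forces the $\min$--$\max$ expression for $\alpha$ and distinguishes the explicit scheme from the implicit one in Proposition~\ref{thm:convergence rate of AOR-VOS-saddle-I}.
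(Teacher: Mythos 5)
Your proposal follows the same route that the paper points to: specialize the explicit AGSS analysis (Theorem~\ref{th:AGSS} plus a $\mu_f,\mu_g$-separated variant of Lemma~\ref{lem:cross Breg AGSS}), rewrite~\eqref{eq:explicitN} as a corrected implicit Euler, push the AOR residuals through the identity of squares into~\eqref{eq: modified Lya AGSS-saddle}, and close with the strong Lyapunov property. Note that the saddle scheme is exactly the general AGSS scheme \eqref{eq:intro-agss} with $\tilde B=\begin{pmatrix}0&0\\ B&0\end{pmatrix}$, so that $\tilde B^{\mathrm{sym}}=\begin{pmatrix}0& B^{\intercal}\\ B&0\end{pmatrix}$ and $\tfrac12\|\cdot\|^2_{\tilde B^{\mathrm{sym}}}$ becomes the bilinear cross term $(B(\cdot),\cdot)$; your derivation of the bilinear residual via the identity of squares is consistent with this.

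There is one bookkeeping slip. You state that the step-increment term $\alpha\,(B(v_{k+1}-v_k),q_{k+1}-q_k)$ is cancelled by the \emph{full} $\tfrac{\mu_f}{2}\|v_{k+1}-v_k\|^2+\tfrac{\mu_g}{2}\|q_{k+1}-q_k\|^2$ portion of $D_{\mathcal E}(\bs z_k,\bs z_{k+1})$, ``provided $\alpha\|B\|\le\sqrt{\mu_f\mu_g}$.'' That over-spends the budget: the very same $\tfrac{\mu_f}{2}\|v_{k+1}-v_k\|^2$ and $\tfrac{\mu_g}{2}\|q_{k+1}-q_k\|^2$ pieces are already required (in $\beta$-fraction) by the Lemma~\ref{lem:cross Breg}/\ref{lem:cross Breg AGSS}-type bounds to absorb the gradient step-increment residuals $\alpha\inprd{v_{k+1}-v_k,\nabla f_{-\mu_f}(u_{k+1})-\nabla f_{-\mu_f}(u_k)}$ and its $g$-analogue. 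The correct split is the same $\beta/(1-\beta)$ partition you apply (correctly) to the distance-to-optimum quadratics: only the $(1-\beta)$-fraction of the step-increment quadratics is available for the skew block, giving the constraint $\alpha\|B\|\le(1-\beta)\sqrt{\mu_f\mu_g}$ there as well. This does not change the final step-size formula or the contraction factor $1/(1+\alpha)$, because that constraint is already binding from the $\bs z_{k+1}$-side nonnegativity of $\mathcal E^{\alpha}$, but as written your intermediate inequality would not hold with equality of resources. You flag this as ``the main obstacle'' in the last sentence, so the awareness is there; just make the $(1-\beta)$ factor explicit in both places.
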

		
		The convergence rate in Theorem \ref{thm:convergence rate of AOR-VOS-saddle} is global and accelerated, meaning that to obtain $\|(v_k, q_k) - (u^{\star}, p^{\star})\| \leq \epsilon$, we need at most $$O\left(\sqrt{L_f/\mu_f + L_g/\mu_g + \|B\|^2/(\mu_f \mu_g) }  | \log 
		\epsilon |\right)$$ iterations. The iteration complexity is optimal for first-order methods for saddle point problems~\cite{zhang2022lower}. 
		
		\begin{remark}\rm
			The current framework, however, cannot handle the case $\mu_g = 0$ and $\mu_f > 0$. Transformed primal dual method for saddle point problems are recently developed~\cite{ChenWei2023Transformed} and can be combined with the acceleration technique to deal with this case.  
		$\Box$\end{remark}

		\section{Convex Optimization}
		\label{sec:Scale-GD-Prox}
		In this section we focus on the convex case, i.e.,  $\mu = 0$. We introduce a dynamically updating parameter and a perturbed VOS flow. For both modified flows, EPC-VOS method can achieve the accelerated convergence rate. 
		
		\subsection{Dynamically Scaled Gradient Flow}  
		Consider the dynamically scaled gradient flow:  
		\begin{equation}\label{eq:s-gf}  
			\left\{  
			\begin{aligned}  
				x' &= -\nabla f(x)/\gamma, \\  
				\gamma' &= -\gamma.  
			\end{aligned}  
			\right.  
		\end{equation}  
		with arbitrary initial values $x(0) = x_0$ and $\gamma(0) = \gamma_0 > 0$.  
		Introducing the parameter $\gamma$, governed by the equation $\gamma' = -\gamma$, results in a time rescaling effect: $\gamma = \gamma_0 e^{-t}$ or equivalently $t = \ln \gamma_0 - \ln \gamma$.


		Let $\mathcal G(x, \gamma)$ be the vector field of ODE \eqref{eq:s-gf}. We introduce a Lyapunov function
		\begin{equation}\label{eq:Lt-rs-gd}
			\mathcal E(x, \gamma):=f(x)-f(x^{\star})+\frac{\gamma}{2}\nm{x-x^{\star}}^2.
		\end{equation}
		Let us verify the strong Lyapunov property as follows
		\begin{align}
			-\inprd{ \nabla \mathcal E(x, \gamma),\mathcal G(x, \gamma)} = {}&
			\dual{\nabla f(x),x-x^{\star}}+
			\frac{\gamma}{2}\nm{x-x^{\star}}^2
			+\frac{1}{\gamma}\nm{\nabla f(x)}_*^2\notag\\
			\geqslant{}&f(x)-f(x^{\star})+\frac{\gamma}{2}\nm{x-x^{\star}}^2+\frac{1}{\gamma}\nm{\nabla f(x)}_*^2\notag\\
			={}&\mathcal E(x, \gamma)+\frac{1}{\gamma}\nm{\nabla f(x)}_*^2.
			\label{eq:A-re-gd}
		\end{align}
		Hence $\mathcal E$ is a strong Lyapunov function of $\mathcal G$ with extra term $p^2(x, \gamma)=\nm{\nabla f(x)}_*^2/ \gamma$. By Theorem \ref{thm:strongLya}, we have 
		\begin{equation}\label{eq:conv-sc-gf}
			\mathcal E(x(t), \gamma(t))\leqslant e^{-t}\mathcal E(x_0, \gamma_0),
			\quad\,t\geqslant 0.
		\end{equation}
		Note that even for $\mu = 0$, we can still achieve the exponential stability. In the continuous level, exponential decay can be always obtained by a time rescaling.

	\subsection{Proximal Point Algorithm}
	\label{sec:scale-prox}
	Convergence analysis of the implicit Euler methods for smooth or non-smooth convex functions are almost identical. Therefore in the following we present the smooth case only.
	
	Given any time step size $\alpha_k>0$, the implicit Euler method reads as
	\begin{equation}\label{implicitEuler-gf-s-non}
		\left\{
		\begin{aligned}
			\frac{		x_{k+1}-x_{k}}{\alpha_k} = {}&\mathcal G^x(x_{k+1},\gamma_{k}) := - \frac{1}{\gamma_{k}} \nabla f(x_{k+1})\\\frac{		\gamma_{k+1}-\gamma_{k} }{\alpha_k}= {}&
			\mathcal G^\gamma(x_k,\gamma_{k+1}) := - \gamma_{k+1}.  
		\end{aligned}
		\right.
	\end{equation}
	Denoted by $t_k=\alpha_k/\gamma_{k},$ the update of $x_{k+1}$ in \eqref{implicitEuler-gf-s-non} can be written using the proximal operator
	\begin{equation}\label{eq:scale-prox}
		x_{k+1}  =  \proxi_{t_k f}(x_{k}) :=
		\mathop{\argmin}\limits_x	\left\{
		f(x)+\frac{1}{2t_k}\nm{x-x_k}^2
		\right\}.
	\end{equation}
	which can be applied to non-smooth $f$ as well.
	%
	
	
	
	We introduce the discrete Lyapunov function
	\begin{equation}\label{eq:fgamma}
		\mathcal E_k := \mathcal E(x_k,\gamma_k) = f(x_k)-f(x^{\star})+\frac{\gamma_k}{2}\nm{x_k-x^{\star}}^2.
	\end{equation}
	In \eqref{eq:scale-prox}, only the step size $t_k=\alpha_k/\gamma_{k}$ enters the algorithm. Therefore we estimate the convergence rate by $t_k$. The following result recovers the classic estimate of PPA in ~\cite{guler_convergence_1991}. 
	
	\begin{theorem}\label{thm:im-sgf-non}
		Assume $f$ is convex. Then for $(x_k,\gamma_k)$ produced by the scheme \eqref{implicitEuler-gf-s-non} with any $\alpha_k>0$, we have
		\begin{equation}\label{eq:conv-Lk-sgf}
			\mathcal E_{k+1} \leqslant \frac{1}{1+\alpha_k} \, \mathcal E_{k},
		\end{equation}
		where $\mathcal E_k$ is defined in \eqref{eq:Lt-rs-gd}.
		Consequently for any $\gamma_0 > 0$,
		\begin{equation}\label{eq:Lkdecay}
			\max\left \{f(x_k)-f(x^{\star}), t_k\nm{\nabla f(x_{k+1})}_*^2\right \} \leqslant	\mathcal E_k \leqslant \frac{\mathcal E_0}{1 + \gamma_0\sum_{i=0}^{k-1} t_i},
		\end{equation}
		where $t_k=\alpha_k/\gamma_{k}$ is the step size for the gradient descent step. 
	\end{theorem}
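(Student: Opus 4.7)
The plan is to derive a sharper discrete analog of the continuous strong Lyapunov inequality \eqref{eq:A-re-gd} by exploiting the decoupled structure of the semi-implicit scheme \eqref{implicitEuler-gf-s-non}. A direct appeal to Theorem \ref{th:implicitEuler} is not available: the Lyapunov function $\mathcal E(x,\gamma)=f(x)-f(x^{\star})+\frac{\gamma}{2}\|x-x^{\star}\|^2$ is not jointly convex in $(x,\gamma)$, so I will instead argue by hand, treating the two components separately.

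First I would read off from the $\gamma$-equation of \eqref{implicitEuler-gf-s-non} the explicit ratio $\gamma_{k+1}=\gamma_k/(1+\alpha_k)$, equivalently $\gamma_{k+1}-\gamma_k=-\alpha_k\gamma_{k+1}$. The key step is to split the quadratic increment with the weight $\gamma_k$ (not $\gamma_{k+1}$) on the two squares:
$$
\frac{\gamma_{k+1}}{2}\|x_{k+1}-x^{\star}\|^2-\frac{\gamma_k}{2}\|x_k-x^{\star}\|^2=\frac{\gamma_k}{2}\bigl(\|x_{k+1}-x^{\star}\|^2-\|x_k-x^{\star}\|^2\bigr)-\frac{\alpha_k\gamma_{k+1}}{2}\|x_{k+1}-x^{\star}\|^2.
$$
The elementary identity $\|b\|^2-\|a\|^2\leq 2\langle b,b-a\rangle$, the scheme relation $\gamma_k(x_{k+1}-x_k)=-\alpha_k\nabla f(x_{k+1})$, and the convexity bound $\langle \nabla f(x_{k+1}),x_{k+1}-x^{\star}\rangle\geq f(x_{k+1})-f(x^{\star})$ reduce the first term on the right to $-\alpha_k(f(x_{k+1})-f(x^{\star}))$. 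Combining with the convexity estimate $f(x_{k+1})-f(x_k)\leq \langle \nabla f(x_{k+1}),x_{k+1}-x_k\rangle=-\frac{\alpha_k}{\gamma_k}\|\nabla f(x_{k+1})\|_*^2$ assembles the two pieces into the sharper one-step inequality
$$
\mathcal E_{k+1}-\mathcal E_k\leq -\alpha_k\,\mathcal E_{k+1}-\frac{\alpha_k}{\gamma_k}\|\nabla f(x_{k+1})\|_*^2,
$$
the discrete counterpart of \eqref{eq:A-re-gd}. Rearrangement delivers \eqref{eq:conv-Lk-sgf}, while the leftover nonnegative term supplies the gradient estimate $t_k\|\nabla f(x_{k+1})\|_*^2\leq \mathcal E_k$ in \eqref{eq:Lkdecay}.

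For the sublinear bound in \eqref{eq:Lkdecay}, iterating \eqref{eq:conv-Lk-sgf} gives $\mathcal E_k\leq \mathcal E_0/\prod_{i=0}^{k-1}(1+\alpha_i)$. Expanding the product by induction and then substituting $\gamma_i=\gamma_0/\prod_{j<i}(1+\alpha_j)$ produces the identity
$$
\prod_{i=0}^{k-1}(1+\alpha_i)=1+\sum_{i=0}^{k-1}\alpha_i\prod_{j<i}(1+\alpha_j)=1+\gamma_0\sum_{i=0}^{k-1}t_i,
$$
which completes the stated estimate. The optimality-gap bound $f(x_k)-f(x^{\star})\leq \mathcal E_k$ is immediate since $\gamma_k>0$.

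The main obstacle is the correct splitting of the quadratic increment: a symmetric split placing $\gamma_{k+1}$ on both squares leaves a surplus $\alpha_k\gamma_{k+1}\|x_k-x^{\star}\|^2$ term that cannot be reabsorbed into $\mathcal E_{k+1}$, whereas the $\gamma_k$-weighted split exploits the explicit relation $\gamma_k(x_{k+1}-x_k)=-\alpha_k\nabla f(x_{k+1})$ and produces exactly the factor $-\alpha_k\mathcal E_{k+1}$ required by the strong Lyapunov mechanism. Once this choice is made, the rest of the argument mirrors the continuous computation leading to \eqref{eq:A-re-gd}.
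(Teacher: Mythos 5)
Your proof is correct and follows essentially the same route as the paper. Both proofs decompose the increment of $\mathcal E$ into the change in $x$ at the frozen weight $\gamma_k$ plus the change in $\gamma$ (your $\gamma_k$-weighted split of the quadratic increment is precisely the paper's decomposition into ${\rm I}_1+{\rm I}_2$), then invoke the implicit-Euler relation $\gamma_k(x_{k+1}-x_k)=-\alpha_k\nabla f(x_{k+1})$ and convexity at $x_{k+1}$ to produce $-\alpha_k\mathcal E_{k+1}-t_k\|\nabla f(x_{k+1})\|_*^2$; your elementary inequality $\|b\|^2-\|a\|^2\le 2\langle b,b-a\rangle$ is the same estimate the paper packages as $\mathcal E(\cdot,\gamma_k)\in\mathcal S_{\gamma_k}^0$. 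Your product expansion $\prod_{i<k}(1+\alpha_i)=1+\gamma_0\sum_{i<k}t_i$ is a minor reorganization of the paper's telescoping of $1/\gamma_{k+1}-1/\gamma_k=t_k$, yielding the identical bound.
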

	\begin{proof}
		We split the difference as
		\begin{align*}
			\mathcal E_{k+1}- \mathcal E_k={}&
			\underbrace{ \mathcal E(x_{k+1},\gamma_{k}) - \mathcal E(x_{k},\gamma_{k})}_{{\rm I}_1} 
			+\underbrace{\mathcal E(x_{k+1},\gamma_{k+1}) - \mathcal E(x_{k+1},\gamma_{k})}_{{\rm I}_2 }.	
		\end{align*}
		As for fixed $\gamma>0$, the function 
		$\mathcal E(\cdot,\gamma)\in \mathcal S_{\gamma}^{0}$, 
		we obtain that 
		\begin{align*}
			{\rm I}_{1} \leqslant 	{}&\dual{\partial_x \mathcal E(x_{k+1},\gamma_k),x_{k+1}-x_k}-\frac{\gamma_k}{2}\nm{x_{k+1}-x_k}^2\\
			=	{}& - \frac{\alpha_k}{\gamma_k}\nm{\nabla f(x_{k+1})}_*^2 
			- \alpha_k \dual{x_{k+1} -x^{\star}, \nabla f(x_{k+1})} - \frac{\alpha_k^2}{2\gamma_{k}}
			\nm{\nabla f(x_{k+1})}_*^2\\
			\leqslant{}&- \alpha_k (f(x_{k+1}) - f(x^{\star})) - t_k
			\nm{\nabla f(x_{k+1})}_*^2.
		\end{align*}
		Notice that as $f$ is convex only, we only get part of the Lyapunov function $f(x_{k+1}) - f(x^{\star})$. 
		The quadratic part will be obtained by the change of parameters:
		\[
		{\rm I}_2 = {}\dual{\partial_{\gamma}\mathcal E( x_{k+1},\gamma_{k+1}), \gamma_{k+1} - \gamma_{k}} ={} - \alpha_k \frac{\gamma_{k+1}}{2}\| x_{k+1} - x^{\star}\|^2.
		\]
		
		Adding all together, we get the discrete strong Lyapunov property
		\[
		\begin{split}
			\mathcal E_{k+1}- \mathcal E_k
			\leqslant {}& - \alpha_k \mathcal E_{k+1} - t_k
			\nm{\nabla f(x_{k+1})}_*^2,
		\end{split}
		\]
		which leads to \eqref{eq:conv-Lk-sgf} and $t_k
		\nm{\nabla f(x_{k+1})}_*^2 \leq \mathcal E_k$.
		
		By recursion, we have $$\mathcal E_k \leqslant \frac{\mathcal E_0}{\prod_{i=0}^{k-1}(1+\alpha_i)} = \mathcal E_0 \prod_{i=0}^{k-1} \frac{\gamma_{i+1}}{\gamma_i} = \mathcal E_0 \frac{\gamma_k}{\gamma_0}.$$
		We solve for $\gamma_k$ by summing the identity 
		$$
		\frac{1}{\gamma_{k+1}} - \frac{1}{\gamma_{k}} = \frac{\alpha_k}{\gamma_k} = t_k,
		$$
		and get the rate \eqref{eq:Lkdecay}.
	$\Box$\end{proof}

	For explicit methods, sub-linear rate is expected for $\mu = 0$. The implicit Euler method, however, retains the linear rate uniformly for all $\mu \geqslant 0$. The larger is step size $t_k$, the better is the convergence rate. On the other hand, uniform bound $\alpha_k \geqslant \alpha > 0$ implies the exponential increasing of $t_k$ and the proximal operator is harder to evaluate. In the limiting case $t_k = \infty$, it goes back to the original optimization problem. 
	
	\subsection{Accelerated Gradient methods}
	We consider the operator equation 
	$$
	\mathcal L(x) = \nabla F(x) + N(x) = 0
	$$
	and assume $F$ is convex and $N$ is monotone. Consider the VOS flow. 
	\begin{equation}\label{eq:Hagf-intro}
		\left\{
		\begin{aligned}
			x' = {}&y-x,\\
			y'={}&-\frac{1}{\gamma}(\nabla F(x) + N(y)),\\
			\gamma'={}& -\gamma.
		\end{aligned}
		\right.
	\end{equation}
	For $\bs z=(x, y)$, we use the Lyapunov function
	\begin{equation}\label{eq:Egamma}
		\mathcal E(\bs z, \gamma):=D_F(x, x^{\star})+\frac{\gamma}{2}\nm{y-x^{\star}}^2,
	\end{equation}
	and denote by $\mathcal G(\bs  z, \gamma)$ the right hand side of \eqref{eq:Hagf-intro}. 
	
	As $\mu = 0$, we will use the dynamical updated parameter $\gamma$ which acts like the parameter $\mu$ as before. The extra quadratic term in the strong Lyapunov property is obtained by the changing of $\gamma$. 
	A direct computation gives 
	\begin{equation}\label{eq:A-HNAG}
		-\inprd{\nabla \mathcal E(\bs z, \gamma), \mathcal G(\bs z, \gamma) }
		={} \dual{\nabla F(x),x-x^{\star}}+ \frac{\gamma}{2}\nm{y-x^{\star}}^2	\geqslant{}\mathcal E(\bs z, \gamma).
	\end{equation}
	Hence $\mathcal E(\bs z, \gamma)$ defined by \eqref{eq:Egamma} is a strong Lyapunov function. 
	
	A semi-implicit discretization of \eqref{eq:Hagf-intro} is
	\begin{equation}\label{eq:NAG}
		\left\{
		\begin{aligned}
			\frac{x_{k+1}-x_{k}}{\alpha_k}={}& y_{k}-x_{k+1},\\
			\frac{y_{k+1}-y_{k}}{\alpha_k}={}&
			-\frac{1}{\gamma_k}(\nabla F(x_{k+1}) + N(y_{k+1})),\\
			\frac{\gamma_{k+1} - \gamma_{k} }{\alpha_k}  ={}&  -\gamma_{k+1},
		\end{aligned}
		\right.
	\end{equation}
	where $\alpha_k>0$ is the time step size. 
	
	
	\begin{lemma}\label{lem:NAG-GS2}
		Let $\bs z = (x, y)$. For the scheme~\eqref{eq:NAG}, we have
		\begin{equation}\label{eq:NAGdecay}
			(1+\alpha_k)\mathcal E({\bs z}_{k+1},\gamma_{k+1})
			\leqslant 	
			\mathcal E(\bs z_{k}, \gamma_k)
			- \alpha_k \langle \nabla F(x_{k+1}) - \nabla F(x^{\star}), y_{k+1} - y_k\rangle - \frac{\gamma_k}{2}\| y_{k+1} - y_k\|^2,
		\end{equation}
		where $\mathcal E$ is defined in \eqref{eq:Egamma}.
	\end{lemma}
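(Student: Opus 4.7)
The plan is to adapt the proof of Lemma~\ref{lem:NAG-GS} to this dynamically-scaled setting, where the role previously played by the constant $\mu$ is now taken by the state-dependent $\gamma_k$, and an additional bookkeeping step is needed for the $\gamma$-update. First I would decompose the difference along the two kinds of updates,
\[
\mathcal E_{k+1}-\mathcal E_k=\bigl[\mathcal E(\bs z_{k+1},\gamma_k)-\mathcal E(\bs z_k,\gamma_k)\bigr]+\bigl[\mathcal E(\bs z_{k+1},\gamma_{k+1})-\mathcal E(\bs z_{k+1},\gamma_k)\bigr].
\]
The second bracket is immediate: since $\mathcal E$ is linear in $\gamma$ with slope $\tfrac12\|y-x^\star\|^2$, the $\gamma$-equation $\gamma_{k+1}-\gamma_k=-\alpha_k\gamma_{k+1}$ gives exactly $-\alpha_k\tfrac{\gamma_{k+1}}{2}\|y_{k+1}-x^\star\|^2$. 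This piece, when combined with a $-\alpha_k D_F(x_{k+1},x^\star)$ coming from the $\bs z$-update, assembles into $-\alpha_k\mathcal E(\bs z_{k+1},\gamma_{k+1})$ and eventually supplies the factor $(1+\alpha_k)$ on the left-hand side.

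For the $\bs z$-bracket at the frozen parameter $\gamma_k$, I would rewrite the first two lines of \eqref{eq:NAG} as a corrected implicit Euler step for the vector field $\mathcal G(\cdot,\gamma_k)$,
\[
\bs z_{k+1}-\bs z_k=\alpha_k\,\mathcal G^{\bs z}(\bs z_{k+1},\gamma_k)-\alpha_k\begin{pmatrix}y_{k+1}-y_k\\[2pt] 0\end{pmatrix},
\]
the correction capturing the explicit appearance of $y_k$ (rather than $y_{k+1}$) in the $x$-equation. Substituting this into the convexity identity
\[
\mathcal E(\bs z_{k+1},\gamma_k)-\mathcal E(\bs z_k,\gamma_k)=\langle\nabla_{\bs z}\mathcal E(\bs z_{k+1},\gamma_k),\bs z_{k+1}-\bs z_k\rangle-D_{\mathcal E(\cdot,\gamma_k)}(\bs z_k,\bs z_{k+1})
\]
produces three groups of terms: a Lyapunov descent part, which the computation behind \eqref{eq:A-HNAG} bounds by $-\alpha_k D_F(x_{k+1},x^\star)$ after discarding the nonnegative $N$-contribution via monotonicity; the unwanted cross term $-\alpha_k\langle\nabla F(x_{k+1})-\nabla F(x^\star),y_{k+1}-y_k\rangle$ inherited from the correction; and the negative Bregman contribution $-D_F(x_k,x_{k+1})-\tfrac{\gamma_k}{2}\|y_{k+1}-y_k\|^2$.

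Adding the two brackets, dropping the nonnegative $D_F(x_k,x_{k+1})$, and moving the $-\alpha_k\mathcal E(\bs z_{k+1},\gamma_{k+1})$ term to the left-hand side yields \eqref{eq:NAGdecay}. The only real obstacle is index bookkeeping: the strong Lyapunov bound is applied at $(\bs z_{k+1},\gamma_k)$ because the $\bs z$-iteration freezes $\gamma_k$, whereas $\gamma_{k+1}$ appears through the implicit $\gamma$-update, and it is precisely this mismatch that produces the multiplier $(1+\alpha_k)$ via $\gamma_k=(1+\alpha_k)\gamma_{k+1}$, thereby recovering the structure of Lemma~\ref{lem:NAG-GS} in the scaled regime.
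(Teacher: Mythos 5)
Your proposal is correct and follows essentially the same route as the paper's own proof: split the difference into a $\bs z$-bracket at frozen $\gamma_k$ and a $\gamma$-bracket, write the $\bs z$-step as a corrected implicit Euler discretization whose correction is $-\alpha_k(y_{k+1}-y_k,0)^\intercal$, bound the descent part by $-\alpha_k D_F(x_{k+1},x^\star)$ using convexity of $F$ and monotonicity of $N$, keep the $y$-portion of the Bregman divergence, and let the $\gamma$-update supply the missing quadratic term so that $D_F(x_{k+1},x^\star)+\tfrac{\gamma_{k+1}}{2}\|y_{k+1}-x^\star\|^2$ reassembles into $\mathcal E(\bs z_{k+1},\gamma_{k+1})$. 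The paper's proof does the same computation without naming the "corrected implicit Euler" framing explicitly, but the algebra is identical, including the observation that the strong Lyapunov estimate is applied at $(\bs z_{k+1},\gamma_k)$ and the $(1+\alpha_k)$ multiplier emerges from the implicit $\gamma$-update.
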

	\begin{proof}
		We split the difference as
		\begin{align*}
			\mathcal E(\bs z_{k+1},\gamma_{k+1})- \mathcal E(\bs z_{k},\gamma_{k})={}&
		\underbrace{	\mathcal E(\bs z_{k+1},\gamma_{k}) - \mathcal E(\bs z_{k},\gamma_{k}) }_{{\rm I}_1}
			+\underbrace{\mathcal E(\bs z_{k+1},\gamma_{k+1}) - \mathcal E(\bs z_{k+1},\gamma_{k})}_{{\rm I}_2}.
		\end{align*}
		The first term is
		$$
		\begin{aligned}
			{\rm I}_1 = &\inprd{\nabla_{\bs z} \mathcal E({\bs z}_{k+1}, \gamma_k), {\bs z}_{k+1} - \bs z_{k}} - D_{\mathcal E} (\bs z_{k}, {\bs z}_{k+1}; \gamma_k)\\
			= & - \alpha_k \inprd{\nabla F(x_{k+1}) - \nabla F(x^{\star}), x_{k+1} - x^{\star}} - \alpha_k \langle \nabla F(x_{k+1}) - \nabla F(x^{\star}), y_{k+1} - y_k\rangle \\
			&\ - \alpha_k \langle y_{k+1} - x^{\star}, N(y_{k+1}) - N(x^{\star})\rangle - D_{\mathcal E} (\bs z_{k},{\bs z}_{k+1};\gamma_k)\\
			\leq\, & \alpha_k D_{F}(x_{k+1}, x^{\star})- \alpha_k \langle \nabla F(x_{k+1}) - \nabla F(x^{\star}), y_{k+1} - y_k\rangle - \frac{\gamma_k}{2}\| y_{k+1} - y_k\|^2.
		\end{aligned}
		$$
		While the second term ${\rm I}_2 $ will be the change of the parameter:
		$$
		{\rm I}_2 = \frac{\gamma_{k+1}-\gamma_k}{2}\| y_{k+1} - x^{\star}\|^2 = -  \alpha_k \frac{\gamma_{k+1}}{2}\| y_{k+1} - x^{\star}\|^2.
		$$
		Adding these two inequalities and rearranging terms to get the desired result.
	$\Box$\end{proof}
	
	Then we can apply EPC 
	with $\alpha_k = \sqrt{\gamma_k/L}$ to get the convergence. We summarize the algorithm and convergence in the following theorem.
	\begin{theorem}
		Suppose $F$ is convex and $L_F$-smooth, and $N$ is monotone. Consider the scheme, with $\alpha_k =  \sqrt{\gamma_k/L_F}$,
		\begin{equation}\label{eq:NAGfull}
			\begin{aligned}
				\frac{\tilde{x}_{k+1}-x_{k}}{\alpha_k}={}& y_{k}-\tilde{x}_{k+1},\\
				\frac{y_{k+1}-y_{k}}{\alpha_k}={}&
				-\frac{1}{\gamma_k}\left(\nabla F(\tilde{x}_{k+1}) + N(y_{k+1})\right),\\
				\frac{x_{k+1}-x_{k}}{\alpha_k}={}& y_{k+1}-x_{k+1},\\
				\frac{\gamma_{k+1} - \gamma_{k} }{\alpha_k} = {}&  -\gamma_{k+1}.
			\end{aligned}
		\end{equation}
		Then we have the convergence
		\begin{equation}
			\label{eq:conv-NAG}
			\mathcal E(x_{k+1}, y_{k+1},\gamma_{k+1}) \leqslant \frac{1}{1+\alpha_k} \, \mathcal E(x_{k}, y_{k},\gamma_{k}).
		\end{equation}
		Consequently for any $\gamma_0 > 0$, with $c_0 = \sqrt{\gamma_0/L_F}/ (\sqrt{\gamma_0/L_F+1} + 1)$,
		\begin{equation}\label{eq:k2decay}
			\mathcal E({\bs z}_{k},\gamma_{k}) \leqslant  \frac{1}{\left ( c_0  \  k +1  \right )^2}\mathcal E({\bs z}_{0},\gamma_{0}).
		\end{equation} 
	\end{theorem}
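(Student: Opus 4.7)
The plan is to mirror the proof of Theorem~\ref{thm:conv-ex0-ode-NAG}, with the dynamically decaying parameter $\gamma_k$ now playing the role of $\mu$, and then to turn the one-step contraction \eqref{eq:conv-NAG} into the sublinear rate \eqref{eq:k2decay} by analyzing the scalar recursion for $\gamma_k$. First, I would notice that the predictor triple $(\tilde x_{k+1},y_{k+1},\gamma_{k+1})$ obtained from the first, second and fourth equations of \eqref{eq:NAGfull} is exactly one step of the IMEX scheme \eqref{eq:NAG} started from $(x_k,y_k,\gamma_k)$. Applying Lemma~\ref{lem:NAG-GS2} to this predictor directly yields
\[
(1+\alpha_k)\mathcal E(\tilde{\bs z}_{k+1},\gamma_{k+1})
\leqslant \mathcal E(\bs z_k,\gamma_k)
-\alpha_k\langle \nabla F(\tilde x_{k+1})-\nabla F(x^{\star}),y_{k+1}-y_k\rangle
-\tfrac{\gamma_k}{2}\|y_{k+1}-y_k\|^2.
\]

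Next, subtracting the first from the third equation of \eqref{eq:NAGfull} produces the extrapolation identity $x_{k+1}-\tilde x_{k+1}=\tfrac{\alpha_k}{1+\alpha_k}(y_{k+1}-y_k)$, the analogue of \eqref{eq:xyrelation}. Since the $y$- and $\gamma$-components of $\bs z_{k+1}$ and $\tilde{\bs z}_{k+1}$ agree, the correction changes the Lyapunov function only through $D_F(x_{k+1},x^{\star})-D_F(\tilde x_{k+1},x^{\star})$. Using the $L_F$-smoothness upper bound \eqref{eq:DL} on the Bregman divergence $D_F(\cdot,x^{\star})$ together with the extrapolation identity, this difference is bounded by
\[
\tfrac{\alpha_k}{1+\alpha_k}\langle \nabla F(\tilde x_{k+1})-\nabla F(x^{\star}),y_{k+1}-y_k\rangle
+\tfrac{L_F\alpha_k^2}{2(1+\alpha_k)^2}\|y_{k+1}-y_k\|^2.
\]
Dividing the previous inequality by $1+\alpha_k$ and adding, the cross terms cancel exactly; the coefficient of $\|y_{k+1}-y_k\|^2$ becomes $\tfrac{L_F\alpha_k^2}{2(1+\alpha_k)^2}-\tfrac{\gamma_k}{2(1+\alpha_k)}$, which, because $L_F\alpha_k^2=\gamma_k$, simplifies to $-\tfrac{\gamma_k\alpha_k}{2(1+\alpha_k)^2}\leqslant 0$ and can therefore be dropped, giving \eqref{eq:conv-NAG}.

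For the sublinear rate \eqref{eq:k2decay}, I would iterate \eqref{eq:conv-NAG} and exploit the closed-form identity $\gamma_{k+1}(1+\alpha_k)=\gamma_k$ to telescope $\prod_{i=0}^{k-1}(1+\alpha_i)^{-1}=\gamma_k/\gamma_0$, so that $\mathcal E(\bs z_k,\gamma_k)\leqslant(\gamma_k/\gamma_0)\mathcal E(\bs z_0,\gamma_0)$. It then suffices to control $\gamma_k/\gamma_0$. Setting $\lambda_k:=1/\alpha_k=\sqrt{L_F/\gamma_k}$, the update becomes $\lambda_{k+1}^2=\lambda_k^2+\lambda_k$, equivalently $\lambda_{k+1}=\sqrt{\lambda_k(\lambda_k+1)}$, whence $\lambda_{k+1}-\lambda_k=1/(1+\sqrt{1+1/\lambda_k})$. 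Since this expression is monotone increasing in $\lambda_k$ and $\lambda_k$ itself is increasing, the differences are minorized by $\lambda_1-\lambda_0$, giving the linear lower bound $\lambda_k\geqslant \lambda_0+k(\lambda_1-\lambda_0)$ and hence $\gamma_k/\gamma_0=(\lambda_0/\lambda_k)^2\leqslant 1/(c_0 k+1)^2$ with the explicit constant $c_0$ announced.

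The main obstacle is the tight cancellation in the combined step: the smoothness upper bound on the Bregman divergence and the predictor–corrector extrapolation must fit together so precisely that the cross term vanishes and only a non-positive quadratic remainder is left. It is this coupling that forces the step size $\alpha_k=\sqrt{\gamma_k/L_F}$ dictated by the flow rather than leaving it as a free parameter. Once \eqref{eq:conv-NAG} is established, the quadratic recursion $\lambda_{k+1}^2=\lambda_k^2+\lambda_k$ is classical and the monotonicity of its increments is what upgrades the one-step contraction to the accelerated $\mathcal O(1/k^2)$ decay.
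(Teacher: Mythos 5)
Your proof follows the paper's approach essentially verbatim: the contraction \eqref{eq:conv-NAG} is obtained by applying Lemma~\ref{lem:NAG-GS2} to the predictor triple and then the $L_F$-smoothness upper bound on $D_F(\cdot,x^{\star})$ for the corrector (exactly mirroring Theorem~\ref{thm:conv-ex0-ode-NAG}, with $\gamma_k$ in place of $\mu$), and the $O(1/k^2)$ rate follows by telescoping $\prod_{i<k}(1+\alpha_i)^{-1}=\gamma_k/\gamma_0$ and then lower-bounding a reciprocal recursion linearly. Your parametrization $\lambda_k=1/\alpha_k=\sqrt{L_F/\gamma_k}$ is a constant multiple of the paper's $1/\sqrt{\rho_k}$ with $\rho_k=\gamma_k/\gamma_0$, and your increment bound $\lambda_1-\lambda_0=1/(1+\sqrt{1+\alpha_0})$ reproduces the paper's estimate exactly; note only that this (like the paper's own proof) gives $c_0=\sqrt{\gamma_0/L_F}/(1+\sqrt{1+\sqrt{\gamma_0/L_F}})$, whereas the theorem statement prints $\sqrt{\gamma_0/L_F+1}$ in the denominator, which appears to be a typo.
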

	\begin{proof}
		Based on \eqref{eq:NAGdecay}, the contraction \eqref{eq:conv-NAG} can be proved as that in Theorem \ref{thm:conv-ex0-ode-NAG} if the step size satisfies $L_F\alpha_k^2
		\leq \gamma_k(1+\alpha_k)$, which is true for $\alpha_k = \sqrt{\gamma_k/L_F}$.
		By recursion, we have $$\mathcal E_k \leqslant \frac{\mathcal E_0}{\prod_{i=0}^{k-1}(1+\alpha_i)} = \mathcal E_0 \prod_{i=0}^{k-1} \frac{\gamma_{i+1}}{\gamma_i} = \mathcal E_0 \frac{\gamma_k}{\gamma_0}.$$
		
		So it remains to estimate the decay rate of $\rho_k : = \gamma_k/\gamma_0$. By the third equation of \eqref{eq:NAGfull}, $\{\rho_k \}$ satisfies the relation
		\begin{equation}\label{eq:lowalpharho}
			\rho_{k+1} - \rho_k = - \alpha_k \rho_{k+1}, \quad i.e. 
			\quad \rho_{k+1} = \frac{1}{1+\alpha_k}\rho_k, 
		\end{equation}
		which implies $\rho_k$ is monotone decreasing and $\rho_k <  \rho_0 = 1$ for all $k\geq 1$.
		
		Consider the difference of $1/\sqrt{\rho_k}$ and use \eqref{eq:lowalpharho} to get
		\[
		\begin{aligned}
			\frac{1}{\sqrt{\rho_{k+1}}} - \frac{1}{\sqrt{\rho_k}}
			={}& \frac{\rho_k - \rho_{k+1}}{\sqrt{\rho_k \rho_{k+1}} (\sqrt{\rho_k} + \sqrt{\rho_{k+1}})}=\frac{\alpha_k}{\sqrt{\rho_k}(1+\sqrt{1+\alpha_k})}.
		\end{aligned}
		\]		
		As $\alpha_k =  \sqrt{\gamma_k/L_F} = \sqrt{\rho_k} c \leq c$, with $c = \sqrt{\gamma_0/L_F}$. Then
		\[
		\frac{1}{\sqrt{\rho_{k+1}}} - \frac{1}{\sqrt{\rho_k}}=
		\frac{c}{1+\sqrt{1+\alpha_k}}\geqslant 
		\frac{c}{\sqrt{c+1} + 1}.
		\]
		Summing from $k=0, 1, \ldots$, we obtain 
		\begin{equation}\label{eq:est-1}
			\rho_k\leqslant  \frac{1}{\left ( c_0  k +1  \right )^2}, \quad c_0 = \frac{\sqrt{\gamma_0/L_F} }{\sqrt{\gamma_0/L_F+1} + 1}.
		\end{equation}
	$\Box$\end{proof}
	
	\begin{remark}\rm 
		According to the proof, the equality $\gamma_{k+1} - \gamma_{k} =  - \alpha_k \gamma_{k+1}$ to update the parameter can be relaxed to an inequality 
\begin{equation}\label{eq:gammainequality}
\gamma_{k+1} - \gamma_{k} \leq  - \alpha_k \gamma_{k+1}.
\end{equation}
 We can thus choose the parameters in a simple form
		\begin{equation*}
			\alpha_k = \frac{2}{k+1}, \quad \gamma_k = \alpha_k^2 L_F = \frac{4}{(k+1)^2} L_F,
		\end{equation*}
which satisfies \eqref{eq:gammainequality} by direct calculation, 		
		and obtain the convergence rate
		$$
		\mathcal E_k \leq  \frac{c_0 \mathcal E_0 }{(k+1)^2}, \quad c_0 = 4  L_F /\gamma_0. 
		$$
	$\Box$\end{remark}

By applying the operator splitting to examples in Section \ref{sec:examples}, we thus recover the accelerated gradient method for convex optimization (NAG), for composite convex optimization (FISTA), and for the saddle point system but implicit in $N$ (which requiring solve a linear saddle point system; cf. \eqref{eq:implicitN}). To achieve the accuracy $\epsilon$, the iteration complexity is $O(\sqrt{L_F/\epsilon})$. 
	

	\subsection{A Perturbed Lyapunov Analysis}
	\label{sec:perturb}
	We present a perturbation argument to establish that a solution with accuracy $\epsilon$ can be obtained with an iteration complexity of $O(\sqrt{L/\epsilon})$.
	
	Fix some $\epsilon > 0$, the perturbed VOS flow is 
	\begin{equation}\label{eq:perturbed VOS}
		\left \{\begin{aligned}
			x^{\prime} &= y - x ,\\
			y^{\prime} & = \frac1{\epsilon}\big(\underbrace{{\epsilon(x - y)}}_{\rm perturbation} -  \nabla F(x) - N(y)\big),
		\end{aligned}\right .
	\end{equation}
	where  $F$ is convex and $N$ is monotone. Due to the variable splitting, the perturbation of adding $\epsilon(x-y)$ to the vector field will not change the equilibrium points. 
	
	Consider the perturbed Lyapunov function
	\begin{equation}\label{eq: perturbed Lya fun}
		\mathcal E(x,y):= D_F(x, x^{\star}) + \frac{\epsilon}{2}\|y - x^{\star}\|^2,    
	\end{equation}
	where $\epsilon> 0$ and $x^{\star}$ is one but fixed solution of the operator equation $\nabla F(x^{\star}) + N(x^{\star}) = 0$. As the Lyapunov function depends on $\epsilon$, we shall denote it as $\mathcal E(x, y; \epsilon)$, where $\epsilon$ is considered as a parameter and thus $\nabla \mathcal E$ is taking respect to $x$ and $y$. 
	
	\begin{lemma}\label{lem:pertrubed Lya property}
		Let $\mathcal E$ be the Lyapunov function \eqref{eq: perturbed Lya fun}, and let $\mathcal G$ be the vector field of the perturbed VOS flow \eqref{eq:perturbed VOS}. The following perturbed strong Lyapunov property holds:
		\begin{equation}\label{eq:pertrubed Lya property}
			-\inprd{ \nabla \mathcal{E}(x, y; \epsilon) ,\mathcal{G}(x, y)} \geq \mathcal{E}(x, y; \epsilon) + D_{F}(x^{\star}, x) + \frac{\epsilon}{2} \|x - y\|^2 - \frac{\epsilon}{2} \|x - x^{\star}\|^2.
		\end{equation}
	\end{lemma}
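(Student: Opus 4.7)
The plan mirrors the proofs of Theorems~\ref{th:SLy1} and \ref{th:SLy2}: compute $\nabla \mathcal E$ and $\mathcal G$, expand the inner product, and exploit the cancellation that the variable splitting produces across the two components. First I would write down
\[
\nabla \mathcal E(x,y;\epsilon) = \bigl(\nabla F(x) - \nabla F(x^\star),\; \epsilon(y-x^\star)\bigr),
\]
together with $\mathcal G^x = y-x$ and $\mathcal G^y = (x-y) - \epsilon^{-1}(\nabla F(x) + N(y))$, and substitute $\nabla F(x^\star) + N(x^\star)=0$ so that the operator terms in $\mathcal G^y$ appear as the differences $\nabla F(x) - \nabla F(x^\star)$ and $N(y) - N(x^\star)$.

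The crucial step is then the cancellation of the cross terms in $\nabla F$. From the $x$-component I pick up $-\langle \nabla F(x) - \nabla F(x^\star),\, y-x\rangle$, while from the $y$-component I pick up $+\langle \nabla F(x) - \nabla F(x^\star),\, y-x^\star\rangle$. Adding these two and writing $(y-x^\star) - (y-x) = x - x^\star$ produces $\langle \nabla F(x) - \nabla F(x^\star),\, x-x^\star\rangle = 2M_{\nabla F}(x,x^\star) = D_F(x,x^\star) + D_F(x^\star,x)$. This is the same mechanism that drove Theorem~\ref{th:SLy1}; the new wrinkle is that the perturbation $\epsilon(x-y)$ in $\mathcal G^y$ contributes an extra term $\epsilon\langle y-x^\star, y-x\rangle$ which I must handle separately.

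Next I would discard the nonnegative contribution $\langle y-x^\star,\, N(y) - N(x^\star)\rangle \geq 0$ coming from the monotonicity of $N$ (note that strong monotonicity is \emph{not} available in the convex case $\mu=0$, which is exactly why the perturbation is needed). For the remaining term I apply the identity of squares~\eqref{eq:squares} to $(y-x) = (y-x^\star) - (x-x^\star)$:
\[
\langle y-x^\star,\, y-x\rangle = \tfrac{1}{2}\|y-x^\star\|^2 - \tfrac{1}{2}\|x-x^\star\|^2 + \tfrac{1}{2}\|y-x\|^2.
\]
Multiplying by $\epsilon$ and combining with $D_F(x,x^\star)$ assembles $\mathcal E(x,y;\epsilon) = D_F(x,x^\star) + \tfrac{\epsilon}{2}\|y-x^\star\|^2$ on the right-hand side, leaving exactly $D_F(x^\star,x) + \tfrac{\epsilon}{2}\|x-y\|^2 - \tfrac{\epsilon}{2}\|x-x^\star\|^2$ as the surplus, which is the claimed inequality.

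I do not expect a serious obstacle here since every step is algebraic and the monotonicity of $N$ enters only as a one-sided bound. The conceptually subtle point, and the one worth emphasizing in the write-up, is the sign of the surplus: the perturbation injects a \emph{negative} term $-\tfrac{\epsilon}{2}\|x-x^\star\|^2$ that breaks the classical strong Lyapunov property, which is why the result is stated as a \emph{perturbed} strong Lyapunov property. Absorbing or controlling this indefinite term in later convergence arguments (rather than in this lemma itself) is where the real difficulty will lie.
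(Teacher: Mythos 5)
Your proposal is correct and follows essentially the same route as the paper's proof: expand $-\langle\nabla\mathcal E,\mathcal G\rangle$, use the cancellation of the $\nabla F$ cross terms coming from the two components to produce $\langle\nabla F(x)-\nabla F(x^\star),x-x^\star\rangle = D_F(x,x^\star)+D_F(x^\star,x)$, drop the monotonicity term $\langle y-x^\star,N(y)-N(x^\star)\rangle\ge 0$, and expand the perturbation term $\epsilon\langle y-x^\star,y-x\rangle$ by the identity of squares. The paper's proof is slightly terser but carries out exactly the same algebra.
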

	\begin{proof}
		By direct calculations, the cross term $\langle \nabla F(x) - \nabla F(x^{\star}), y - x^{\star} \rangle$ cancels due to the sign change. Therefore,
		\begin{equation*}
			\begin{aligned}
				- \inprd{\nabla \mathcal{E}(x, y; \epsilon), \mathcal{G}(x, y)} = {}&
				\langle \nabla F(x) - \nabla F(x^{\star}), x - x^{\star} \rangle + \langle y - x^{\star}, N(y) - N(x^{\star}) \rangle \\
				& + \epsilon (y - x, y - x^{\star})\\
				\geq {}& \mathcal{E}(x, y; \epsilon) + D_{F}(x^{\star}, x) + \frac{\epsilon}{2} \|x - y\|^2 - \frac{\epsilon}{2} \|x - x^{\star}\|^2.
			\end{aligned}
		\end{equation*}
	$\Box$\end{proof}
Previously if $F$ is $\epsilon$-convex, $2 D_{F}(x^{\star}, x)\geq \epsilon \|x - x^{\star}\|^2$ which implies the strong Lyapunov property. Now we leave $ \|x - x^{\star}\|^2$ term here. 
	
	Consider EPC-VOS scheme for solving \eqref{eq:perturbed VOS}:
	\begin{equation}\label{eq:EPCepsilon}  
		\left\{  
		\begin{aligned}  
			\frac{\tilde{x}_{k+1} - x_k}{\alpha} &= y_k - \tilde{x}_{k+1}, \\  
			\frac{y_{k+1} - y_k}{\alpha} &= \tilde{x}_{k+1} - y_{k+1} - \frac{1}{\epsilon}\left(\nabla F(\tilde{x}_{k+1}) + N(y_{k+1})\right), \\  
			\frac{x_{k+1} - x_k}{\alpha} &= y_{k+1} - x_{k+1}.  
		\end{aligned}  
		\right.  
	\end{equation}  
	For saddle point problems, we can further combine the AOR technique to discretize the skew-symmetric term \cite[Section 5.3.3]{chen2023accelerated}. 
	
	\begin{theorem}[Convergence of perturbed EPC-VOS method]
		\label{thm:convergence rate of EPC-VOS}
		Suppose $F$ is convex and $L_F$-smooth, and $N$ is monotone. Let $(x_k, y_k)$ be the sequence generated by scheme \eqref{eq:EPCepsilon} with the initial value $(x_0, y_0)$ and step size $\alpha = \sqrt{\epsilon/L_F}$. Assume that there exists $R >0$ such that
		\begin{equation}\label{eq:boundednorm}
			\|x_k - x^{\star}\| \leq R, \quad \forall k \geq 0.
		\end{equation}
		Then we have the linear convergence with perturbation
		\begin{equation}\label{eq: perturbed AOR-VOS Ealpha decay}
			\begin{aligned}
				\mathcal{E}(x_{k}, y_{k},\epsilon) \leq \left(\frac{1}{1+\sqrt{\epsilon/L_F}}\right)^{k} \mathcal{E}(x_{0}, y_{0},\epsilon) + 
				\epsilon R^2, \quad k \geq 0.
			\end{aligned}
		\end{equation}
	\end{theorem}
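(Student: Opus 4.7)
The plan is to adapt the EPC-VOS convergence argument of Theorem \ref{thm:conv-ex0-ode-NAG} by substituting the perturbed strong Lyapunov property \eqref{eq:pertrubed Lya property} for the unperturbed one, then absorbing the sole ``bad'' term $\frac{\epsilon}{2}\|x-x^\star\|^2$ as an additive drift controlled by the boundedness hypothesis \eqref{eq:boundednorm}. First, I would establish a perturbed analogue of Lemma \ref{lem:NAG-GS}: rewriting the first two equations of \eqref{eq:EPCepsilon} for $\tilde{\bs z}_{k+1}=(\tilde x_{k+1},y_{k+1})$ as a corrected implicit Euler step
\[
\tilde{\bs z}_{k+1}-\bs z_k = \alpha\,\mathcal G(\tilde{\bs z}_{k+1}) - \alpha\,(y_{k+1}-y_k,\,0)^{\intercal},
\]
expanding $\mathcal E(\tilde{\bs z}_{k+1};\epsilon)-\mathcal E(\bs z_k;\epsilon)$ via the Bregman identity, and then applying Lemma \ref{lem:pertrubed Lya property} at $\tilde{\bs z}_{k+1}$. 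Dropping the nonnegative terms $D_F(x^\star,\tilde x_{k+1})$ and $\tfrac{\alpha\epsilon}{2}\|\tilde x_{k+1}-y_{k+1}\|^2$ while keeping $\tfrac{\alpha\epsilon}{2}\|\tilde x_{k+1}-x^\star\|^2$ as the perturbation, I would obtain
\[
(1+\alpha)\mathcal E(\tilde{\bs z}_{k+1};\epsilon) \leq \mathcal E(\bs z_k;\epsilon) - \tfrac{\epsilon}{2}\|y_{k+1}-y_k\|^2 - \alpha\,\langle\nabla F(\tilde x_{k+1})-\nabla F(x^\star),\,y_{k+1}-y_k\rangle + \tfrac{\alpha\epsilon}{2}\|\tilde x_{k+1}-x^\star\|^2.
\]

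Second, I would handle the corrector step exactly as in Theorem \ref{thm:conv-ex0-ode-NAG}: the relation $x_{k+1}-\tilde x_{k+1}=\tfrac{\alpha}{1+\alpha}(y_{k+1}-y_k)$, combined with the $L_F$-smoothness upper bound on $D_F(x_{k+1},x^\star)-D_F(\tilde x_{k+1},x^\star)$, produces a second inequality whose cross term exactly cancels against the one above. The choice $\alpha=\sqrt{\epsilon/L_F}$ satisfies $L_F\alpha^2\leq \epsilon(1+\alpha)$, which eliminates the residual $\|y_{k+1}-y_k\|^2$ contribution. Invoking \eqref{eq:boundednorm} to bound $\|\tilde x_{k+1}-x^\star\|^2 \leq R^2$, I would arrive at the contraction-with-drift
\[
\mathcal E(\bs z_{k+1};\epsilon) \leq \tfrac{1}{1+\alpha}\mathcal E(\bs z_k;\epsilon) + \tfrac{\alpha\epsilon R^2}{2(1+\alpha)}.
\]
Iterating and summing the geometric series $\sum_{j=0}^{k-1}(1+\alpha)^{-(j+1)}\leq 1/\alpha$ turns the drift into an additive constant bounded by $\tfrac{\epsilon R^2}{2}\leq \epsilon R^2$, yielding \eqref{eq: perturbed AOR-VOS Ealpha decay}.

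The main obstacle is bookkeeping around the indefinite perturbation term $\tfrac{\alpha\epsilon}{2}\|\tilde x_{k+1}-x^\star\|^2$. The hypothesis \eqref{eq:boundednorm} is imposed only on the corrector iterates $\{x_k\}$, whereas the one-step analysis naturally produces a norm on the predictor $\tilde x_{k+1}$, which by the first line of \eqref{eq:EPCepsilon} is the convex combination $\tfrac{1}{1+\alpha}x_k+\tfrac{\alpha}{1+\alpha}y_k$. To close the argument cleanly, one either strengthens the hypothesis to cover $\{y_k\}$ as well, or bootstraps a uniform bound on $\|y_k-x^\star\|$ from the fact that $\mathcal E(\bs z_k;\epsilon)$ itself stays bounded along the iteration -- a self-consistency that is in fact a byproduct of the very contraction we are establishing. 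Once this point is settled, the remainder is essentially a line-by-line transcription of the EPC-VOS proof with the single extra drift term tracked to the end.
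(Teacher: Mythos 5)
Your proposal follows the paper's proof essentially line-by-line: substitute the perturbed strong Lyapunov property from Lemma \ref{lem:pertrubed Lya property} into the EPC-VOS argument of Theorem \ref{thm:conv-ex0-ode-NAG}, cancel the cross term with the corrector step via \eqref{eq:xyrelation}, bound the residual drift by $R^2$, and sum the geometric series. Your one-step constant $\frac{\alpha\epsilon}{2(1+\alpha)}R^2$ is a factor of two sharper than the $\frac{\alpha\epsilon}{1+\alpha}R^2$ recorded in the paper, though both deliver $\epsilon R^2$ after summing. You are also right to flag the subtlety that \eqref{eq:boundednorm} is stated on the corrector iterates $\{x_k\}$ while the Lyapunov property evaluated at $\tilde{\bs z}_{k+1}$ naturally produces $\frac{\epsilon}{2}\|\tilde x_{k+1}-x^\star\|^2$, with $\tilde x_{k+1}=\frac{1}{1+\alpha}x_k+\frac{\alpha}{1+\alpha}y_k$ requiring control of $\|y_k-x^\star\|$ as well; the paper's proof conflates the two by writing $\|x_{k+1}-x^\star\|^2\leq R^2$, so this gloss is already present in the original argument. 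Your suggested fix of bootstrapping a bound on $\|y_k-x^\star\|$ from $\|y_k-x^\star\|^2\leq 2\mathcal E(\bs z_k;\epsilon)/\epsilon$ together with a forward induction on boundedness of $\mathcal E$ is a clean way to make the hypothesis self-contained.
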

	\begin{proof}
		The proof is almost identical to that of Theorem \ref{thm:conv-ex0-ode-NAG}. The only difference is to use the perturbed strong Lyapunov property \eqref{eq:pertrubed Lya property} and the bound $\|x_{k+1} - x^{\star}\|^2 \leq R^2$ which leads to the inequality
		\begin{equation}
			\begin{aligned}
				\mathcal E(x_{k+1}, y_{k+1},\epsilon) &\leqslant \frac{1}{1+\alpha}\mathcal E(x_{k}, y_{k},\epsilon)
				+ \frac{\alpha \epsilon}{1+\alpha}R^2\\
				&\leq \left(\frac{1}{1+\alpha}\right)^{k+1} \mathcal{E}(x_0, y_0,\epsilon) + \frac{\alpha \epsilon R^2}{1+\alpha}\sum_{i=0}^{k} \frac{1}{(1+\alpha)^i}.
			\end{aligned}
		\end{equation}
		Summing up the geometric series we get the desired result.
	$\Box$\end{proof}
	
	The assumption \eqref{eq:boundednorm} can be verified in the special case where \(N = 0\) and \(F\) are coercive. In this scenario, we can use another Lyapunov function \(\tilde{\mathcal E}(x, y; \epsilon) = F(x) - F(x^{\star}) + \frac{\epsilon}{2} \|y - x\|^2\). Through direct calculation, we have  
	\[
	-\inprd{\nabla \tilde{\mathcal E}(x, y; \epsilon) ,\mathcal{G}(x, y)} \geq \epsilon \|y - x\|^2 \geq 0.
	\]
	As a result, \(F(x(t)) - F(x^{\star}) \leq \tilde{\mathcal E}_0\) for \(t > 0\), and Lemma \ref{eq:R0} implies the bound \(\|x(t) - x^{\star}\| \leq R_0\). The discrete case can be established analogously.
	
	To achieve an accuracy \(\mathcal{E}(x_k, y_k) = O(\epsilon)\), the number of iterations is bounded by  
	\[
	\left(1 + \sqrt{\epsilon / L_F}\right)^{-k} = O(\epsilon) \quad \Longrightarrow \quad k = O\left (\sqrt{\frac{L_F}{\epsilon}} |\ln \epsilon| \right).
	\]
	Compared to the dominant complexity \(O(\epsilon^{-1/2})\), the logarithmic factor \(O(|\ln \epsilon|)\) is negligible. This establishes the nearly optimal complexity of accelerated gradient methods. 
	
	
	
	\begin{algorithm}
		\caption{EPC-VOS with perturbation.}
		\label{alg: restart perturbed VOS}
		\begin{algorithmic}[1]
			\State \textbf{Parameters: Inititial value and tolerance $(x_0, y_0, \epsilon_0)$ and termination tolerance $\epsilon$.}
			\State Set $k= 0$ and $m_0=(\sqrt{L_F}+\sqrt{\epsilon_{0}})\ln (2(R^2+1))\epsilon_{0}^{-1/2}$ 
			\While{$\epsilon_k > \epsilon$}
			
			\State $\epsilon_{k+1} = \epsilon_k/2, \quad m_{k+1} = \sqrt{2}\,  m_k$
			
\State Apply EPC-VOS scheme \eqref{eq:EPCepsilon} with the initial value $(x_k, y_k)$, the parameter $\epsilon_{k+1}$ and the step size $\alpha=\sqrt{\epsilon_{k+1}/L_F}$ for $m_{k+1}$ iterations to get $(x_{k+1}, y_{k+1})$
			
			\State $k = k + 1$
			
			\EndWhile
			\State \textbf{return} $(x_{k}, y_k)$
		\end{algorithmic}
	\end{algorithm}

Since $\epsilon_{k+1}$ is strictly decreasing and the perturbation will not change the equilibrium point $x^{\star}$, we can use the homotopy argument to remove the $|\ln \epsilon|$ dependence. This is summarized in Algorithm \ref{alg: restart perturbed VOS}.  
	
	\begin{theorem}
		Choose $(x_0, y_0)$ and $\epsilon_0$ satisfying $\mathcal E(x_0, y_0, \epsilon_0) \leq (R^2+ 1)\epsilon_{0}$.
		Then for $(x_{k}, y_{k}, \epsilon_{k})$ generated by Algorithm \ref{alg: restart perturbed VOS}, we have
		\begin{equation}\label{eq:Ek-ek}
			\mathcal E(x_{k}, y_{k}, \epsilon_{k})\leq (R^2+1)\epsilon_{k}\quad\forall\,k\geq0,
		\end{equation}
Moreover, let $M_k:=\sum_{i=0}^{k}m_i $ be the overall iteration steps after the $k$-th outer iteration and $C = \frac{(\sqrt{2}-1)}{(\sqrt{2L_F}+\sqrt{2\epsilon_0})\ln (2(R^2+1))}$, then we have the global convergence rate
\begin{equation}\label{eq:rate}
	\mathcal E(x_k,y_k,\epsilon_k)\leq \frac{R^2+1}{\left (C M_k+\epsilon_0^{-1/2}\right )^{2}}\quad\forall\,k\geq 0.
\end{equation}
		Thus, it takes $M_k=O(\sqrt{L_F/\epsilon})$ iterations to get the accuracy $\mathcal E(x_{k}, y_{k}, \epsilon_{k}) = O(\epsilon)$. 
	\end{theorem}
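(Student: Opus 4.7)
The plan is to establish the invariant \eqref{eq:Ek-ek} by induction on $k$, then convert the geometric decay $\epsilon_k = \epsilon_0/2^k$ into the polynomial rate \eqref{eq:rate} by expressing the index $k$ through the cumulative iteration count $M_k$ via a geometric sum. The final complexity estimate then follows immediately.

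For the inductive proof of \eqref{eq:Ek-ek}, the base case $k=0$ is the standing hypothesis. For the step $k \to k+1$, I would first use the monotonicity of $\mathcal E(x,y;\cdot)$ in its third argument: since $\epsilon_{k+1} = \epsilon_k/2 < \epsilon_k$, the induction hypothesis gives
\[
\mathcal E(x_k, y_k, \epsilon_{k+1}) \leq \mathcal E(x_k, y_k, \epsilon_k) \leq (R^2+1)\epsilon_k = 2(R^2+1)\epsilon_{k+1}.
\]
Next I would invoke Theorem \ref{thm:convergence rate of EPC-VOS} with initial data $(x_k, y_k)$, parameter $\epsilon_{k+1}$, step size $\alpha = \sqrt{\epsilon_{k+1}/L_F}$, and iteration count $m_{k+1}$, yielding
\[
\mathcal E(x_{k+1}, y_{k+1}, \epsilon_{k+1}) \leq \bigl(1+\sqrt{\epsilon_{k+1}/L_F}\bigr)^{-m_{k+1}} \cdot 2(R^2+1)\epsilon_{k+1} + R^2 \epsilon_{k+1}.
\]
To close the induction it suffices to force $(1+\sqrt{\epsilon_{k+1}/L_F})^{m_{k+1}} \geq 2(R^2+1)$; applying the elementary inequality $\ln(1+t) \geq t/(1+t)$ reduces this to $m_{k+1} \geq \ln(2(R^2+1)) \cdot (\sqrt{L_F}+\sqrt{\epsilon_{k+1}})/\sqrt{\epsilon_{k+1}}$. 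Substituting $\epsilon_{k+1} = \epsilon_0/2^{k+1}$ and $m_{k+1} = 2^{(k+1)/2} m_0$ with the prescribed $m_0$, this inequality reduces to the manifestly true $2^{(k+1)/2}(\sqrt{L_F}+\sqrt{\epsilon_0}) \geq 2^{(k+1)/2}\sqrt{L_F} + \sqrt{\epsilon_0}$.

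For the rate \eqref{eq:rate}, I would combine \eqref{eq:Ek-ek} with $\epsilon_k = \epsilon_0/2^k$ to obtain $\mathcal E(x_k, y_k, \epsilon_k) \leq (R^2+1)\epsilon_0/2^k$, and then invert the geometric sum $M_k = m_0\sum_{i=0}^{k} 2^{i/2} = m_0 (2^{(k+1)/2}-1)/(\sqrt{2}-1)$ to express $2^{k/2}$ as an affine function of $M_k$. Plugging in the specified value of $m_0$ identifies the linear coefficient precisely with $C$, so that $2^{k/2}\epsilon_0^{-1/2} \geq CM_k + \epsilon_0^{-1/2}$; squaring and using $\epsilon_k = \epsilon_0/2^k$ produces \eqref{eq:rate}. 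The overall complexity $M_k = O(\sqrt{L_F/\epsilon})$ then follows by requiring $\epsilon_k \leq \epsilon$, i.e., $2^{k/2} \geq \sqrt{\epsilon_0/\epsilon}$, and reading off the dominant term in the formula for $M_k$.

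The main obstacle will be the careful matching of restart parameters. The doubling schedules $\epsilon_{k+1} = \epsilon_k/2$ and $m_{k+1} = \sqrt{2}\, m_k$ are tightly coupled: the factor $2$ appearing in the bound $\mathcal E(x_k,y_k,\epsilon_{k+1}) \leq 2(R^2+1)\epsilon_{k+1}$ (caused by reducing the perturbation parameter) must be exactly compensated by the contraction $(1+\sqrt{\epsilon_{k+1}/L_F})^{-m_{k+1}}$, while the geometric growth of $m_k$ must still yield the optimal total complexity $\sqrt{L_F/\epsilon}$ rather than $\sqrt{L_F/\epsilon}\,|\ln\epsilon|$. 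The elementary bound $\ln(1+t) \geq t/(1+t)$ is the precise tool that lets the two geometric rates $2$ and $\sqrt{2}$ interlock through the specified form of $m_0$, thereby removing the logarithmic factor produced by a single-shot application of Theorem \ref{thm:convergence rate of EPC-VOS}.
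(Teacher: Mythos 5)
Your proposal is correct and follows essentially the same route as the paper's proof: induction on the invariant \eqref{eq:Ek-ek} using monotonicity of $\mathcal E$ in $\epsilon$ together with Theorem~\ref{thm:convergence rate of EPC-VOS}, verification of $(1+\alpha)^{m_{k+1}} \geq 2(R^2+1)$ via the elementary bound $\ln(1+t)\geq t/(1+t)$ (the paper writes this as $(1+\alpha)^{-1}\leq 1-\alpha/(1+\sqrt{\epsilon_0/L_F})$ followed by $(1-x)^m\leq e^{-mx}$, which is the same tool), and inversion of the geometric sum to turn $\epsilon_k$ into a polynomial in $M_k$. The only caveat, shared with the paper itself, is a small bookkeeping ambiguity about whether $M_k$ starts its sum at $i=0$ or $i=1$ (the paper writes $\sum_{i=0}^k m_i$ in the statement but actually uses $\sum_{i=1}^k$ in the computation); your identification $2^{k/2}\epsilon_0^{-1/2}=CM_k+\epsilon_0^{-1/2}$ holds exactly under the latter convention, which is the natural one since $m_0$ is not an iteration count actually executed.
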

	\begin{proof}
	We prove \eqref{eq:Ek-ek} by induction. For $k=0$, it holds by choosing $\epsilon_0 = D_F(x_0, x^{\star})$. Now suppose that $\mathcal E(x_k, y_k, \epsilon_{k})\leq (R^2+1)\epsilon_k$ and let us consider the $k+1$-th iteration. Since $0<\alpha\leq \sqrt{\epsilon_0/L_F}$ and $(1+\alpha)^{-1}\leq 1-\alpha/(1+\sqrt{\epsilon_0/L_F})$, the number of inner iterations $m_{k+1}= (\sqrt{L_F}+\sqrt{\epsilon_{0}})\ln (2(R^2+1))\epsilon_{k+1}^{-1/2}$
	is chosen so that
	\[
		\begin{aligned}
			(1+\alpha)^{-m_{k+1}} \leq{}& (1-\alpha/(1+\sqrt{\epsilon_0/L_F}))^{m_{k+1}}\\
			\leq{}& \exp\left(-\alpha m_{k+1}/(1+\sqrt{\epsilon_0/L_F})\right)= (2(R^2+1))^{-1}.
		\end{aligned}
	\]
	Therefore, by \eqref{thm:convergence rate of EPC-VOS},
	$$
	\begin{aligned}
		\mathcal E(x_{k+1}, y_{k+1}, \epsilon_{k+1}) &\leq \frac{1}{2(R^2+1)}\mathcal E(x_k, y_k, \epsilon_{k+1}) + \epsilon_{k+1} R^2\\
		&\leq \frac{1}{2(R^2+1)}\mathcal E(x_k, y_k, \epsilon_{k}) + \epsilon_{k+1} R^2\\
		&\leq \frac{1}{2}\epsilon_{k} + \epsilon_{k+1} R^2 = (R^2+1) \epsilon_{k+1}.
	\end{aligned}
	$$
	
Since $\epsilon_k = \epsilon_0 2^{-k}$, after the $k$-th outer iteration, the overall iteration steps is
		\[
		\begin{aligned}
			M_k=\sum_{i=0}^{k}m_i ={}&  (\sqrt{L_F}+\sqrt{\epsilon_{0}})\ln (2(R^2+1)) \sum_{i=1}^k \epsilon_{i}^{-1/2}\\
			={}& (\sqrt{L_F}+\sqrt{\epsilon_{0}})\ln (2(R^2+1))\frac{\sqrt{2}}{\sqrt{2} - 1} \left(\epsilon_k^{-1/2}-\epsilon_0^{-1/2}\right).
		\end{aligned}
		\]
		Calculating $\epsilon_k$ from this and plugging it into \eqref{eq:Ek-ek} proves \eqref{eq:rate}. As a result, the iteration complexity bound $O(\sqrt{L_F/\epsilon})$ follows easily.
	$\Box$\end{proof}

	In summary, the EPC-VOS method achieves the following complexity:
	
	\begin{itemize}
		\item For convex minimization problem $\min f(x)$ where $f$ is convex and $L$-smooth, the iteration complexity to achieve $f(x) - \min_x f(x) \leq \epsilon $ is $O\left (\sqrt{\frac{L}{\epsilon}}\right )$.
		
		\item For composite convex minimization problem $\min (f(x) + g(x))$ where $f$ is convex and $L$-smooth, $g$ is convex, non-smooth and the proximal operator is given, and $x^{\star} \in \argmin f(x) + g(x)$, the iteration complexity to achieve $f(x) - f(x^{\star}) + g(x)-g(x^{\star}) \leq \epsilon $ is $O\left (\sqrt{\frac{L}{\epsilon}} \right )$.
		
		\item For strongly-convex-concave saddle point problem $\min_u \max_p \mathcal H(u,p) = f(u) - g(p) + (Bu, p)$ where $f$ is $\mu_f$-strongly convex and $L_f$-smooth, $g$ is convex and $L_g$-smooth and  $(u^{\star}, p^{\star}) \in \arg \min_u \arg \max_p L(u,p)$, the iteration complexity to achieve $\mathcal H(u, p^{\star}) - \mathcal H(u^{\star}, p) \leq \epsilon $ is $O\left(\sqrt{\frac{L_g}{\epsilon} + \frac{\|B\|^2}{\mu_f \epsilon}} \right )$.
		
		\item For convex-concave saddle point problem $\min_u \max_p \mathcal H(u,p) = f(u) - g(p) + (Bu, p)$ where $f$ is convex and $L_f$-smooth, $g$ is convex and $L_g$-smooth and  $(u^{\star}, p^{\star}) \in \arg \min_u \arg \max_p L(u,p)$, the iteration complexity to achieve $\mathcal H(u, p^{\star}) - \mathcal H(u^{\star}, p) \leq \epsilon $ is $O\left( \frac{\|B\|}{\epsilon} +\sqrt{\frac{L_f}{\epsilon} + \frac{L_g}{\epsilon}}\right ) = O\left( \frac{\|B\|}{\epsilon}\right )$.
	\end{itemize}
	
	To the best of our knowledge, these match the optimal complexity for first-order methods \cite{ouyang2021lower,Nesterov:2018Lectures}.


\bibliographystyle{abbrv}
\bibliography{VOSref,Optimization}
\end{document}